\DeclareMathSymbol{\square}{\mathbin}{AMSa}{"03}
\def\@url#1{{\tt\def~{\lower3.5pt\hbox{\char'176}}\def\_{\char'137}#1}}
\def\makeautorefname#1#2{\expandafter\def\csname#1autorefname\endcsname{#2}}
                   \let\c@lemma\c@theorem
 \numberwithin{equation}{section}
\newtheorem{thm}[equation]{Theorem}
\newtheorem{prop}[equation]{Proposition}
\newtheorem{lem}[equation]{Lemma}
\newtheorem*{cor*}{Corollary}
\theoremstyle{definition}
\newtheorem{defn}[equation]{Definition}
\newtheorem{exmp}[equation]{Example}
\newtheorem{rmk}[equation]{Remark}
\newtheorem{notation}[equation]{Notation}
\let\c@lem=\c@thm
\let\c@cor=\c@thm
\let\c@prop=\c@thm
\let\c@lem=\c@thm
\let\c@defn=\c@thm
\let\c@exmp=\c@thm
\let\c@exmps=\c@thm
\let\c@rem=\c@thm
\let\c@rec=\c@thm
\let\c@warn=\c@thm
\let\c@claim=\c@thm
\let\c@quest=\c@thm
\newcommand{\Z}{\mathbb{Z}}
\newcommand{\W}{\mathbb{W}}
\newcommand{\sphere}{\mathbb{S}}
\DeclareSymbolFontAlphabet{\scr}{rsfs}
\newcommand{\sma}{\mathbin{\wedge}}
\def\quickop#1{\expandafter\newcommand\csname #1\endcsname{\operatorname{#1}}}
\DeclareMathOperator*{\holim}{holim}
\DeclareMathOperator{\sd}{sd}
\newcommand{\cy}{\text{cy}}
\DeclareMathOperator{\THH}{THH}
\DeclareMathOperator{\TR}{TR}
\DeclareMathOperator{\HH}{HH}
\newcommand{\cyc}{\mathrm{cyc}}
\newcommand{\Mack}{\mathrm{Mack}}
\newcommand{\Green}{\mathrm{Green}}
\newcommand{\Tamb}{\mathrm{Tamb}}
\newcommand{\m}[1]{\underline{#1}}
\newcommand{\Spectra}{\mathrm{Sp}}
\newcommand{\Set}{\mathrm{Set}}
\newcommand{\Ab}{\mathrm{Ab}}
\newcommand{\res}{\mathrm{res}}
\newcommand{\tr}{\mathrm{tr}}
\definecolor{darkspringgreen}{rgb}{0.09, 0.45, 0.27}
\definecolor{darkterracotta}{rgb}{0.8, 0.31, 0.36}
	\definecolor{darkcoral}{rgb}{0.8, 0.36, 0.27}
	\definecolor{indiagreen}{rgb}{0.07, 0.53, 0.03}
	\definecolor{mountainmeadow}{rgb}{0.19, 0.73, 0.56}
	\definecolor{mountbattenpink}{rgb}{0.6, 0.48, 0.55}
	\definecolor{palatinatepurple}{rgb}{0.41, 0.16, 0.38}
	\definecolor{cinnamon}{rgb}{0.82, 0.41, 0.12}
	\definecolor{chocolate}{rgb}{0.82, 0.41, 0.12}
\definecolor{seagreen}{RGB}{46,139,87}
\definecolor{maroon}{RGB}{128,0,0}
\definecolor{darkviolet}{RGB}{210,150,180}
\renewcommand{\id}{\mathrm{id}}
\newcommand{\op}{\mathrm{op}}
\newcommand{\TC}{\textnormal{TC}}
\DeclareMathOperator{\Span}{Span}
\newcommand{\Fin}{\mathrm{Fin}}
\DeclareMathOperator{\CoInd}{CoInd}
\newcommand{\xto}{\xrightarrow}
\DeclareFontFamily{U}{mathx}{\hyphenchar\font45}
\DeclareFontShape{U}{mathx}{m}{n}{
      <5> <6> <7> <8> <9> <10>
      <10.95> <12> <14.4> <17.28> <20.74> <24.88>
      mathx10
      }{}
\DeclareSymbolFont{mathx}{U}{mathx}{m}{n}
\DeclareMathSymbol{\bigboxvoid}{1}{mathx}{"DC}
\title{Equivariant Witt Complexes and Twisted Topological Hochschild Homology}
\author[Bohmann]{Anna Marie Bohmann}
\address[Bohmann]{Department of Mathematics, Vanderbilt University, Nashville, TN, 37240}
\email{am.bohmann@vanderbilt.edu}
\author[Gerhardt]{Teena Gerhardt}
\address[Gerhardt]{Department of Mathematics, Michigan State University, East Lansing, MI 48824 }
\email{teena@math.msu.edu}
\author[Krulewski]{Cameron Krulewski}
\address[Krulewski]{Department of Mathematics, Massachusetts Institute of Technology, Cambridge, MA 02139}
\email{camkru@mit.edu}
\author[Petersen]{Sarah Petersen}
\address[Petersen]{Department of Mathematics, University of Colorado Boulder, Boulder, CO 80309}
\email{sarahllpetersen@gmail.com}
\author[Yang]{Lucy Yang}
\address[Yang]{Department of Mathematics, Columbia University, New York, NY 10027}
\email{ly2620@columbia.edu}
\begin{document}

\begin{abstract} 
The topological Hochschild homology of a ring (or ring spectrum) $R$ is an $S^1$-spectrum, and the fixed points of $\THH(R$) for subgroups $C_n \subset S^1$ have been widely studied due to their use in algebraic $K$-theory computations. Hesselholt and Madsen proved that the fixed points of topological Hochschild homology are closely related to Witt vectors \cite{HeMa97}. Further, they defined the notion of a Witt complex, and showed that it captures the algebraic structure of the homotopy groups of the fixed points of THH \cite{HeMa04}. Recent work \cite{AnBlGeHiLaMa} defines a theory of twisted topological Hochschild homology for equivariant rings (or ring spectra) that builds upon Hill, Hopkins and Ravenel's work on equivariant norms \cite{HHR}. In this paper, we study the algebraic structure of the equivariant homotopy groups of twisted THH. In particular, drawing on the definition of equivariant Witt vectors in \cite{BlGeHiLa}, we define an \emph{equivariant Witt complex} and prove that the equivariant homotopy of twisted THH has this structure.  Our definition of equivariant Witt complexes contributes to a growing body of research in the subject of equivariant algebra.
\end{abstract}
  
\maketitle

\section{Introduction}
Recent computations in higher algebraic $K$-theory owe much of their success to trace methods, an approach in which algebraic $K$-theory is approximated by more computable invariants receiving natural transformations, or \emph{trace maps}, from $K$-theory. The first such approximations to algebraic $K$-theory are Hochschild homology ($\HH$) and its topological analogue, topological Hochschild homology ($ \THH$). 

While trace theories such as $ \HH $ and $ \THH $ are often viewed as stepping stones to algebraic $K$-theory computations, they are interesting in their own right. 
For instance, the Hochschild--Kostant--Rosenberg theorem furnishes a filtration on the Hochschild homology of a ring, with associated graded given by the derived de Rham complex. Recently, Bhatt--Morrow--Scholze used filtrations on trace theories to define prismatic cohomology \cite{BMS}. On the geometric topology side of things, the topological Hochschild homology of the pointed loop space of a space $ X $ computes the stable homotopy type of the free loop space of $ X $.  Such applications provide many  motivations for studying the rich structure of trace theories in algebraic $K$-theory and beyond.

One of the most important structures enjoyed by topological Hochschild homology is a circle action.  This is essential
 to defining topological cyclic homology (TC), a close approximation to algebraic $K$-theory. In the classical approach to trace methods \cite{BHM}, TC is defined using fixed points of THH under the actions of finite subgroups of $S^1$.  
 Given the key role that these fixed points play in computing algebraic $K$-theory, understanding the algebraic structure arising in their homotopy groups is of great interest. 

Hesselholt and Madsen proved in \cite[Theorem F]{HeMa97} that for a commutative ring $A$ and a prime $p$, 
\[
\pi_0(\THH(A)^{C_{p^n}}) \cong W_{n+1}(A),
\]
where $ W_{n+1}(A)$ denotes the length $n+1$ $p$-typical Witt vectors of $A$ and $C_{p^n}\subset S^1$ is the cyclic group of order $p^n$. Further, they prove in \cite[Section 2]{HeMa04} that for $p$ an odd prime and $A$ a $\mathbb{Z}_{(p)}$-algebra, the pro-differential graded ring $E^*_{\bullet} =  \pi_*(\THH(A)^{C_{p^{\bullet -1}}})$ has the algebraic structure of what they term a \emph{Witt complex}. This rigid algebraic structure on the equivariant homotopy groups of THH has facilitated numerous calculations of these homotopy groups, and consequently of algebraic $K$-theory. See, for example, \cite{BokstedtMadsen, HeMa97, HeMa03, BokstedtMadsen2, AGH_Ktrunc, AnGe11}.

In recent years, several equivariant analogues of classical topological Hochschild homology have emerged, including Real topological Hochschild homology (THR) and twisted topological Hochschild homology for $C_n$-spectra (THH$_{C_n}$), the latter of which is the focus of this work. Twisted topological Hochschild homology arose as an extension of Hill, Hopkins and Ravenel's work on equivariant norms. In their work on the Kervaire Invariant One problem, they developed multiplicative norm functors from $H$-spectra to $G$-spectra, for $H\subset G$ finite groups \cite{HHR}. Building upon this work, Angeltveit, Blumberg, Gerhardt, Hill, Lawson and Mandell \cite{AnBlGeHiLaMa} extended the norm construction from finite groups to $S^1$ (see also \cite{BDS18}), and showed that topological Hochschild homology arises an an equivariant norm. Indeed, for a ring spectrum $R$, $\THH(R)$ is $N_e^{S^1}(R)$.  The authors then generalize this work to define the $C_n$-twisted topological Hochschild homology of $R$, $\THH_{C_n}(R)$, as the equivariant norm $N_{C_n}^{S^1}(R)$.   They also give an explicit construction of this norm via a twisted cyclic bar construction.  
As discussed in \cite{AGHKK2}, twisted THH is related via trace maps to Merling's equivariant algebraic $K$-theory \cite{Merling}. Such connections are explored further in forthcoming work of Chan, Gerhardt, and Klang \cite{CGK}.

While topological Hochschild homology was conceived as a topological enhancement of the classical algebraic theory of Hochschild homology, twisted THH preceded its algebraic counterpart.  Nevertheless, in \cite{BlGeHiLa} the authors define a ``twisted'' version of Hochschild homology.  This is no longer simply an invariant of rings, but rather of Green functors, the equivariant algebraic analogue of rings.  Blumberg, Gerhardt, Hill and Lawson use this theory to define Witt vectors for Green functors, and show that these Witt vectors capture the equivariant $\m{\pi}_0$ of twisted THH. In this equivariant analogue of Hesselholt and Madsen's theorem, they show that for $H\subset G \subset S^1$ finite subgroups and $\m{R}$ an $H$-Tambara functor, 
\[
\m{\pi}_0^G\THH_H(\m{R}) \cong \m{W}_G(\m{R}).
\]
Thus, equivariant Witt vectors capture the zeroth homotopy groups of twisted THH.  However, as in the classical case, one would like to understand the algebraic structure that arises in the homotopy groups of twisted THH in all dimensions.  In the current work, we show that the higher equivariant homotopy groups of twisted THH assemble into a structure which we axiomatize as an \emph{equivariant Witt complex} (Definition \ref{defn:equivariant_Witt_complex}). More concretely, we prove the following. 

\begin{thm}\label{thm:main}
    Let $ n $ be a positive integer and $ p $ an odd prime not dividing $ n $. 
  	For $\m{R}$ a $C_n$-Tambara functor such that $\m{R}(C_n/C_m)$ is a $\mathbb{Z}_{(p)}$-algebra for each $C_m\subset C_n$, the graded Green functors $ \{\underline{\pi}_*^{C_{p^mn}} \THH_{C_n} \m{R}\}_{m \geq 0} $ form an equivariant Witt complex over $\m{R}$.
\end{thm}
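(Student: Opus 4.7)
The plan is to mirror Hesselholt--Madsen's argument for the classical Witt complex structure on $\pi_* \THH(A)^{C_{p^{\bullet - 1}}}$ \cite{HeMa04}, carried out now at the level of Mackey and Green functors. Since $p \nmid n$, the subgroup $C_{p^m n} \subset S^1$ splits as $C_{p^m} \times C_n$, so the tower $\{\THH_{C_n}(\m{R})^{C_{p^m n}}\}_{m \geq 0}$ is indexed by the $p$-direction while retaining the full $C_n$-twist. First I would fix a precise catalogue of structure maps matching each piece of data in Definition \ref{defn:equivariant_Witt_complex}, then verify the axioms one by one.

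For the structure maps: the restriction $R$ is induced by the equivariant cyclotomic structure on $\THH_{C_n}(\m{R})$ from \cite{AnBlGeHiLaMa}, which identifies appropriate geometric $C_p$-fixed points with $\THH_{C_n}(\m{R})$ itself. The Frobenius $F$ is induced by the inclusion of fixed points $\THH_{C_n}(\m{R})^{C_{p^{m+1} n}} \hookrightarrow \THH_{C_n}(\m{R})^{C_{p^m n}}$, a map of commutative equivariant ring spectra. The Verschiebung $V$ is the transfer along $C_{p^m n} \subset C_{p^{m+1} n}$. The differential $d$ is the Connes operator, induced by smashing with the fundamental class of $S^1$ using the $S^1$-action on $\THH_{C_n}(\m{R})$ and then passing to the relevant fixed points. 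Finally, the Teichmüller map is the composite of the unit $\m{R} \to \m{\pi}_0 \THH_{C_n}(\m{R})$ with the inclusion into $C_{p^m n}$-fixed points, which may be identified via the equivariant Witt vector isomorphism $\m{\pi}_0^{C_{p^m n}} \THH_{C_n}(\m{R}) \cong \m{W}_{C_{p^m n}}(\m{R})$ of \cite{BlGeHiLa}.

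Verifying the axioms proceeds by a combination of formal arguments and careful bookkeeping. Each $\m{\pi}_*^{C_{p^m n}} \THH_{C_n}(\m{R})$ is a graded Green functor because $\THH_{C_n}(\m{R})$ is a commutative $C_{p^m n}$-ring spectrum, and restriction and Frobenius are Green functor maps by naturality. The relations $FV = p$, $FdV = d$, $RF = FR$, $RV = VR$, and the projection formula $x \cdot V(y) = V(F(x) \cdot y)$ follow from standard double-coset and naturality identities in equivariant stable homotopy theory. The identity $F[a] = [a]^p$ uses that $F$ is a ring map together with the definition of the Teichmüller lift via the unit. The Leibniz rule for $d$ is a consequence of the Connes operator being a graded derivation on any commutative equivariant ring spectrum. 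Finally, the relation $F d[a] = [a]^{p-1} d[a]$ is reduced to the classical case on the underlying spectrum, with the extra Mackey-functorial data tracked through naturality of the $S^1$-action with respect to restrictions, transfers, and norms.

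The main technical obstacle will be the joint verification of the Frobenius--Teichmüller identity $F d[a] = [a]^{p-1} d[a]$ and the Leibniz rule at the level of Green functor structure, rather than merely on individual levels of the underlying Mackey functors. In the classical case, the Frobenius--Teichmüller identity is proved by a delicate homotopical argument using the $C_p$-Tate spectral sequence; equivariantly, the corresponding computation must respect the norms $N_H^K$ appearing in the Tambara structure of $\m{R}$, a compatibility that does not follow from the nonequivariant identity alone. A likely intermediate step is a Green-functor-level identification of $d$ with the Connes operator that is compatible with all restrictions, transfers, and norms; once this is available, the remaining axioms should follow by equivariant analogues of the classical arguments.
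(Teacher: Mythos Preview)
Your outline captures the broad shape of the argument but misses two points that are essential and not merely bookkeeping.

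First, the cyclotomic structure on $\THH_{C_n}(\m{R})$ is \emph{twisted}: one has $\rho_p^*\Phi^{C_p}\THH_{C_n}(\m{R}) \simeq \THH_{C_n}(\phi_p^*\m{R})$, where $\phi_p\colon C_n\to C_n$ is the $p$th power map, rather than an identification with $\THH_{C_n}(\m{R})$ itself. (The paper explicitly notes that the untwisted statement in \cite{AnBlGeHiLaMa} is in error.) Consequently the restriction map $r$ drops the $p$-level by $\nu$, the order of $\phi_p$ in $\mathrm{Aut}(C_n)$, not by $1$; this is built into Definition \ref{defn:equivariant_Witt_complex}(\ref{defn_eqvt_Witt_cplx:r}) and into the paper's construction of $r$ in Definition \ref{defn:twisted_THH_restriction}. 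Your description of $R$ as identifying $C_p$-geometric fixed points with $\THH_{C_n}(\m{R})$ would produce maps at the wrong levels.

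Second, your plan for the power rule \ref{enumitem:eqvt_Witt_cplx_div_power_rule} is off track. You propose to reduce to the classical case on the underlying spectrum and then propagate via naturality, and you speculate that the classical argument goes through a Tate spectral sequence. Neither is what happens. The paper's proof (Proposition \ref{prop:twisted_THH_power_rule}) is a direct analysis at the level of simplicial $C_{p^km}$-spectra: one first gives a spectrum-level representation of the multiplicative lift $[a]_k$ as the composite $\varepsilon\circ N(\eta)\circ Na$ of norm, unit, and counit maps of the $(N,i^*)$-adjunctions (Lemma \ref{lem:spectrumlevelrepnmultlift}), identifies the composite $\varepsilon\circ N(\eta)$ explicitly as a permutation-and-Weyl-action map on smash powers (Lemma \ref{lem:idcounitunitmultliftcomp}), and then compares the two sides of the relation as explicit maps of simplicial spectra, using that actions by elements of a finite subgroup of $S^1$ become homotopic to the identity after realization. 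This is much closer in spirit to Hesselholt's original simplicial argument \cite[Lemma~1.5.6]{Hesselholt1996} than to anything involving Tate cohomology, and it is not a reduction to the nonequivariant case: the argument is carried out $C_{p^{k-1}m}$-equivariantly throughout. Your description of the Teichm\"uller map as ``unit followed by inclusion of fixed points'' is also too coarse; the multiplicative lift of Definition \ref{defn:multiplicative_lift} is built from the internal Tambara norm $n_{C_m}^{C_{p^km}}$, and this norm is what produces the $p$th-power behaviour underlying the relation.
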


We also show that when $n=1$, our $C_n$-equivariant Witt complex recovers Hesselholt and Madsen's classical notion of a Witt complex. 
In the classical (non-equivariant) setting, the de Rham--Witt complex for a $\mathbb{Z}_{(p)}$-algebra $A$ arises as the initial object in the category of Witt complexes over $A$. We will return to the development of an equivariant de Rham-Witt complex for $\m{R}$ a $C_n$-Tambara functor in subsequent work.

Our development of an equivariant Witt complex adds to a growing body of work on \emph{equivariant algebra}, i.e.~the algebra that arises in the study of equivariant homotopy theory. The current work also includes further development of the theory of equivariant Witt vectors. In particular, for a $C_n$-Tambara functor $\m{R}$ we construct multiplicative lifts 
\[
[-]_k\colon  \underline{R}(C_n / C_m) \to \underline{\W}_{C_{p^k n}} (\underline{R}) (C_{p^k n} / C_{p^km})
\]
and prove that these maps lift the $p^k$-power maps when $p$ is coprime to $n$.

\subsection{Organization} 

In Section \ref{sec:background} we recall the classical relationship between topological Hochschild homology and Witt vectors. In Section \ref{equivariantHH} we move to the equivariant setting, giving an overview of the basic objects in equivariant algebra and recalling the constructions of twisted topological Hochschild homology of equivariant spectra and Hochschild homology of Green functors. Section \ref{sec:EquivariantWitt} focuses on equivariant Witt vectors. We recall the definition of equivariant Witt vectors from \cite{BlGeHiLa}, as well as expand upon the theory by defining multiplicative lift maps in the equivariant setting. This section also includes several concrete examples. In Section \ref{sec:EquivariantWittCplx} we define a $C_n$-equivariant Witt complex and prove that when $n=1$ this recovers Hesselholt and Madsen's classical notion of a Witt complex. The proof of Theorem \ref{thm:main} is in Section \ref{sec:TwistedTHHEquivariantWitt}, where we show that the equivariant homotopy groups of twisted THH have the structure of an equivariant Witt complex. 

\subsection{Acknowledgements} This paper is part of the authors' Women in Topology IV project. We are grateful to the organizers of the Women in Topology IV workshop, as well as to the Hausdorff Research Institute for Mathematics, where much of this research was carried out. The authors are grateful to David Chan, Mike Hill, John Rognes, Carissa Slone, and Ben Spitz for helpful conversations related to this work.  We also thank the anonymous referee for their helpful comments.

The first author was supported by NSF grant DMS-2104300. 
 The second author was supported by NSF grants DMS-2104233 and DMS-2404932. The third author was supported by the NSF under Grant Nos.~DGE-2141064 and DMS-2220741. The fourth author thanks the Max Planck Institute for Mathematics in Bonn for its hospitality and financial support during the initial stages of this project. 
The fourth author's contributions are also based on work supported by the National Science Foundation under Grant Nos.~DMS-2135884 and ~DMS-2220741. 
The fifth author was supported by an NSF Graduate Research Fellowship under Grant No.~DGE-2140743 during the completion of this work. 
The authors would like to thank Columbia University, Michigan State University, University of Colorado Boulder, and Vanderbilt University for their hospitality during various research visits. The first, second, and fourth authors would like to thank the Isaac Newton Institute for Mathematical Sciences, Cambridge, for support and hospitality during the programme ``Equivariant homotopy theory in context" where work on this paper was undertaken. The authors would also like to thank the Foundation Compositio Mathematica, the Foundation Nagoya Mathematical Journal, and the K-theory Foundation for financial support for the Women in Topology IV workshop.

\section{Topological Hochschild homology and Witt vectors}\label{sec:background}

Let $ A $ be a commutative ring. 
Classical results relating the values of topological trace theories on $ A $ with more algebro-geometric constructions have been instrumental to computations of algebraic $K$-theory. 
The central goal of this paper is to formulate and prove generalizations of these classical results to genuine equivariant analogues of commutative rings called Tambara functors. 
In this section, we provide the non-equivariant context for our work. We first introduce the relevant trace theories, topological Hochschild homology and topological restriction homology. 
Then we recall Witt vectors and the classical definition of Witt complexes for commutative rings. 

\subsection{Topological Hochschild homology} \label{introTHH}
We begin by recalling the classical definition of the topological Hochschild homology of a ring spectrum.  Let $\Spectra$ denote the category of spectra.

\begin{defn}\label{defn:THH_classical}
    Let $A$ be a ring spectrum. The \emph{topological Hochschild homology} $ \THH(A) $ of $A$ is the geometric realization of the simplicial spectrum known as the \emph{cyclic bar construction}
    \begin{align*}
    N^\cy_\bullet A \colon \Delta^\op &\to \Spectra \\
               [n] &\mapsto A^{\wedge n+1}.
    \end{align*}
 The face maps arise from the multiplication on $A$ as follows:
\[d_i = \begin{cases} \id^{\wedge i} \wedge \mu \wedge \id^{\wedge (n-i-1)} & \text{for } 0\leq i<n \\  (\mu \wedge \id^{\wedge (n-1)}) \circ \tau& \text{for }i=n.
\end{cases}
\] 
Here $\mu$ denotes the multiplication map of $A$. Note the use of the cyclic permuation $\tau$ that moves  the last factor to the front in the definition of the last face map.

The $ i $th degeneracy map $ s_i \colon A^{\wedge n} \to A^{\wedge n+1} $ is given by:
\[
s_i =  \id^{\wedge (i+1)} \wedge \eta \wedge \id^{\wedge (n-i)} \text{\quad for }  0\leq i \leq n
\] 
where $\eta$ is the unit map of $A$. 
\end{defn}

\begin{rmk}
For a classical ring $A$, there is an associated ring spectrum, $HA$, the Eilenberg--MacLane spectrum of $A$. The topological Hochschild homology of a classical ring $A$ is thus defined to be
\[
\THH(A) \coloneqq \THH(HA). 
\]
\end{rmk}

\begin{rmk} \label{rmk:THH_HH_relationship_classical}
  Topological Hochschild homology is the topological analogue of the classical theory of Hochschild homology. For a ring $A$, there is a linearization map 
  \[
  \pi_q(\THH(A)) \to \HH_q(A), 
  \]
  which factors the Dennis trace map from algebraic $K$-theory to Hochschild homology
  \[
  K_q(A) \to \pi_q(\THH(A)) \to \HH_q(A).
  \]
\end{rmk}

 While we have only specified the simplicial structure, the cyclic bar construction produces a cyclic spectrum, and thus $\THH(A)$ inherits a canonical action of $ S^1 $.  In fact, this action makes $\THH(A)$ into a genuine $S^1$-spectrum---this follows from B\"okstedt's original work on the subject \cite{bokstedt_thh}. In the case where $A$ is commutative, this action is especially straightforward.    The circle $ S^1 $ has a simplicial model with $ (n+1) $ simplices in dimension $ n $ and we may view the cyclic bar construction as $A\otimes S^1$ \cite{MSV}.    In fact, if $A$ is an $\mathbb{E}_\infty$-ring spectrum, this perspective allows one to alternatively define $ \THH(A) $  as the free $\mathbb{E}_\infty$-ring spectrum with an $ S^1 $-action (through $\mathbb{E}_\infty$-ring maps) receiving a map from $ A $. 

    In this paper, our perspective is shaped by a new approach to topological Hochschild homology: the topological Hochschild homology of a ring spectrum $A$ can be viewed as the norm $ N_e^{S^1}A $.  This perspective builds on work of Hill, Hopkins, and Ravenel on norms in equivariant homotopy theory \cite{HHR} and is developed in \cite{AnBlGeHiLaMa} and \cite{BDS18}.

Topological Hochschild homology has the further structure of a \emph{cyclotomic spectrum}.  This means there are  equivalences between the geometric fixed points of topological Hochschild homology, $\Phi^{C_n}\THH(A)$, and $\THH(A)$ itself
\[ \rho^*_n\Phi^{C_n}\THH(A) \xto{\simeq} \THH(A)\]
that are compatible as we vary the subgroup $C_n$.  Here $ \rho_n\colon S^1 \xto{\cong} S^1 / C_n$ is the $ n^{\textrm{th}}$ root isomorphism.

  More relevant to this work is the weaker notion of a $p$-\emph{cyclotomic} spectrum, which we now recall.   More details on  cyclotomic and $p$-cyclotomic spectra  can be found in \cite{BlumbergMandell-cycl}.
    \begin{defn} 
\label{defn:pcyclotomic}
   For a prime $p$, a \emph{$ p $-cyclotomic spectrum} is the data of a genuine $ S^1 $-spectrum $X$ together with a morphism of $S^1$-spectra
\[
    r_p\colon    \rho^*_p \Phi^{C_p} X \to X 
\]
    with the property that for all $n\geq 0$, the induced map of $C_{p^n}$-fixed point spectra is a weak equivalence. 
\end{defn}
 Having an action of $ S^1 $ also allows us to take fixed points with respect to finite subgroups of $ S^1 $: 
\begin{defn}\label{defn:TR_classical}
    Let $ A $ be a ring (or ring spectrum), and fix a prime $ p $. 
    The \emph{topological restriction homology} $ \{\TR^{n+1}(A;p)\}_{n \in \Z_{\geq 0}} $ of $ A $ is defined to be the categorical fixed points 
\[\TR^{n+1}(A;p) \coloneqq \THH(A)^{C_{p^n}} \] 
with respect to the finite subgroups $ C_{p^n}\subset S^1 $ as $ n $ varies.  We denote $\pi_q(\THH(A)^{C_{p^{n}}})$ by $\TR^{n+1}_q(A;p)$.
\end{defn}

The fixed points of topological Hochschild homology are related via several operators. 
Inclusion of fixed points induces a map, called the Frobenius:
\[
F\colon \THH(A)^{C_{p^n}} \to \THH(A)^{C_{p^{n-1}}}.
\]
The $p$-cyclotomic structure on $\THH(A)$ induces a second map, called the restriction:
\[
R\colon \THH(A)^{C_{p^n}} =  (\rho_p^*\THH(A)^{C_{p}})^{C_{p^{n-1}}} \to (\rho_p^*\Phi^{C_p} \THH(A))^{C_{p^{n-1 }}} \xrightarrow{\simeq} \THH(A)^{C_{p^{n-1}}}.
\]
The first map in this composite is induced by the canonical map from categorical fixed points to geometric fixed points, and the latter map is the map $r_p$ from the $p$-cyclotomic structure. Work of B\"okstedt, Hsiang, and Madsen \cite{BHM} then defines topological cyclic homology.

\begin{defn}
    Let $ A $ be a ring (or ring spectrum), and fix a prime $ p $.  The \emph{topological cyclic homology} of $ A $ is the homotopy limit
    \[
    \TC(A;p) = \holim_{R,F} \THH(A)^{C_{p^n}}\,. 
    \]
\end{defn}
By construction, the fixed points 
\[
\pi_q(\THH(A)^{C_{p^{n}}})
\]
interpolate between $ \THH $ and $ \TC $. 
Since the Dennis trace map $ K \to \THH $ factors through topological cyclic homology \cite[\S5]{BHM}, $ \TC $ and $ \TR $ are closer approximations to algebraic $K$-theory than $ \THH $. 
Topological restriction homology $\pi_q(\THH(A)^{C_{p^n}})$ has been computed for many examples and used to understand algebraic $K$-theory \cite{BokstedtMadsen, HeMa97, HeMa03, HeMa04, AG_ROS1TR,AGH_Ktrunc}.

Hesselholt and Madsen \cite{HeMa04} studied the algebraic structure of topological restriction homology. On the level of homotopy groups, the maps $F$ and $R$ above induce  maps
\[
F\colon  \pi_q(\THH(A)^{C_{p^n}}) \to \pi_q(\THH(A)^{C_{p^{n-1}}}) \text{\quad and\quad} R\colon  \pi_q(\THH(A)^{C_{p^n}}) \to \pi_q(\THH(A)^{C_{p^{n-1}}}).
\]
The Frobenius map has an associated equivariant transfer map, called the Verschiebung:
\[
V\colon  \pi_q(\THH(A)^{C_{p^{n-1}}}) \to \pi_q(\THH(A)^{C_{p^{n}}}).
\]
For odd $p$, there is also a derivation 
\[
d\colon  \pi_q(\THH(A)^{C_{p^{n}}}) \to \pi_{q + 1}(S^1_+ \wedge \THH (A)^{C_{p^n}}) \to \pi_{q+1}(\THH(A)^{C_{p^{n}}}), 
\]
where the first map is exterior multiplication by $\sigma \in \pi_1^s (S^1_+)$, for $\sigma$ the element reducing to $(\id,0)\in \pi_1(S^1)\oplus\pi_1(S^0)$, and the second map is induced by the $S^1$-action. See \cite{HeMa04} and \cite[Lemma~1.4.2]{Hesselholt1996} for the argument that $d$ is in fact a derivation. 
Further, these operators satisfy various relations. For any commutative ring $A$, $FV$ is the multiplication by $p$ map, and for $A$ a $\mathbb{Z}_{(p)}$-algebra for an odd prime $p$, $FdV=d$.  These algebraic structures are the key in the connection between THH and Witt complexes, the latter of which is the subject of the next section.

\subsection{Witt vectors and Witt complexes}\label{introWitt}
In this section we give a brief introduction to Witt vectors and Witt complexes, and recall their connection to topological Hochschild homology. For a more thorough introduction, see \cite{HesselholtLecWitt} and \cite{HeMa97}.

\begin{defn} For a commutative ring $A$, the  \emph{length $k$ $p$-typical Witt vectors of $A$} is a ring $W_k(A)$ with underlying set 
\[
W_k(A) = \overset{k-1}{\underset{i=0}{\prod}} A = \{(a_0, a_1, \dots, a_{k-1} ) \mid a_i \in A\},
\]
and ring structure determined by the requirement that the map 
\begin{align*}
  w\colon W_k(A) &\to A^k \\
  (a_0,\dots,a_{k-1}) &\mapsto (w_0,\dots, w_{k-1})\quad\text{where}\quad w_n=\sum^n_{i=0} p^i a_i^{p^{n-i}}
\end{align*}
is a natural transformation of functors from rings to rings. The map $w$ is referred to as the \emph{ghost map}. The coordinates $(a_0, a_1, \dots, a_{k-1})$ are called the \emph{Witt coordinates}, while the coordinates $(w_0, w_1, \dots, w_{k-1})$ are the \emph{ghost coordinates}.
\end{defn}

The length $ k $ $p$-typical Witt vectors are related by several operators. 
There are two ring homomorphisms from the length $k$ $p$-typical Witt vectors to the length $k-1$ $p$-typical Witt vectors, the restriction and the Frobenius. 
\begin{defn}\label{restrictionWitt}
There is a ring homomorphism, called the \emph{restriction},
\[
R\colon W_k(A) \to W_{k-1}(A),
\]
given by projection onto the first $k-1$ Witt coordinates. 
\end{defn}
This restriction map makes $W_{\bullet}(A)$ a pro-ring. 

\begin{defn}
	There is a ring homomorphism, called the \emph{Frobenius},  
\[ F\colon W_k(A) \to W_{k-1}(A) \]
	determined by the formula
	\[
	w(F(a)) = (w_1(a), w_2(a), \dots, w_{k-1}(a)),
	\]
	where $a$ denotes the element with Witt coordinates $(a_0, a_1, \dots, a_{k-1}) \in W_k(A)$. 
\end{defn}

\begin{defn} 
	There is an additive map
	\[V\colon W_{k-1}(A) \to W_k (A)\]
	called the \emph{Verschiebung}. The map $V$ is specified in Witt coordinates by 
	\[V(a_0, a_1, \dots a_{k-1} ) = (0, a_0, a_1, \dots a_{k-1}).\] 
\end{defn}

There is also a multiplicative lift map, which is sometimes referred to as the Teichm\"uller map in the literature. 
\begin{defn}\label{defn:multlift}
The \emph{multiplicative lift map}
\[
[-]_k\colon A \to W_{k+1}(A)
\]
is given by $[a]_k = (a, 0, \dots, 0)$ in Witt coordinates. 
\end{defn}
\noindent Observe that the map $[-]_k$ lifts the $p^{k}$-power map in the sense that $F^{k}([a]_k) = a^{p^{k}}$.

\begin{rmk}\label{liftnames}
Note that we are using a non-standard naming convention for the multiplicative lift map above. This map from $A$ to $W_{k+1}(A)$ would typically be named $[-]_{k+1}$ in the Witt vector literature. We have chosen to name this map $[-]_{k}$ instead because this will align better with the natural naming in the equivariant case. See Remark \ref{liftnames2} for more discussion of this point.
\end{rmk}

These operators on the $p$-typical Witt vectors satisfy various relations. The Frobenius and Verschiebung maps both commute with the restriction. There is also a Frobenius reciprocity relation: $xV(y) = V(F(x)y)$. Further, the composite $FV$ is multiplication by $p$, where $p$ denotes the element $[1]$ added $p$-times.

Having recalled the definition of $p$-typical Witt vectors, we now discuss the close relationship between Witt vectors and topological Hochschild homology. Indeed, Hesselholt and Madsen show in \cite[Theorem 3.3]{HeMa97} that for a commutative ring $A$, 
\begin{equation}\label{eq:THHWitt}
\pi_0(\THH(A)^{C_{p^k}}) \cong W_{k+1}(A),
\end{equation} 
and this isomorphism is compatible with the maps $ F, V$, and $R $. 

To capture the algebraic structure of the homotopy groups of the fixed points of THH beyond $\pi_0$, Hesselholt and Madsen define the notion of a Witt complex. 

\begin{defn}[{\cite[p.~2]{HeMa04}}] \label{defn:Wittcplx_classical}
For $p$ an odd prime and $A$ a $\mathbb{Z}_{(p)}$-algebra, a \emph{Witt complex over $A$} consists of:
\begin{enumerate}
\item A pro-differential graded ring $E_\bullet^*$ and a strict map of pro-rings
\[
\lambda\colon W_{\bullet}(A) \to E_{\bullet}^0,
\]
where $W_{\bullet}(A)$ denotes the pro-ring of $p$-typical Witt vectors on $A$. 
\item A strict map of pro-graded rings
\[
F\colon E^*_{\bullet} \to E^*_{\bullet - 1}
\]
such that $\lambda F = F\lambda$ and such that for all $a \in A$,
\[
Fd\lambda([a]_k) = \lambda([a]_{k-1})^{p-1}d\lambda([a]_{k-1}),
\]
where $[a]_k = (a, 0, \ldots, 0) \in W_{k+1}(A)$ is the multiplicative representative.
\item A strict map of graded $E^*_{\bullet}$-modules
\[
V\colon F_*E^*_{\bullet-1} \to E^*_{\bullet},
\]
where $E^*_{\bullet-1}$ is considered as an $E^*_\bullet$-module via the map $F$, such that $\lambda V = V \lambda$, $FdV = d$ and $FV=p$. 
\end{enumerate}
\end{defn}

Hesselholt and Madsen prove that Witt complexes capture the algebraic structure of the fixed points of THH. 

\begin{thm}[\cite{HeMa04}]\label{classicalWittcxtheorem}
For $p$ an odd prime and $A$ a $\mathbb{Z}_{(p)}$-algebra, the pro-differential graded ring given by
\[
E^*_{\bullet} = \pi_*(\THH(A)^{C_{p^{\bullet-1}}}) = \TR_*^{\bullet}(A;p)
\] 
with $ \lambda $ the equivalence of (\ref{eq:THHWitt}) is a Witt complex. 
In this Witt complex, the maps $F$ and $V$ are the Frobenius and Verschiebung maps described in \Cref{introTHH}. The structure maps in the pro-system are the restriction maps $R$. The differential is the map $d$ from \Cref{introTHH}. 
\end{thm}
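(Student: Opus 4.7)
The plan is to verify each axiom of \Cref{defn:Wittcplx_classical} for the pro-graded ring $E^*_\bullet = \pi_*(\THH(A)^{C_{p^{\bullet-1}}})$, relying on the cyclotomic structure and the prior isomorphism $\lambda\colon W_{n+1}(A) \xrightarrow{\cong} \pi_0(\THH(A)^{C_{p^n}})$ from \cite{HeMa97}. First I would establish the pro-differential graded ring structure. The ring structure on each level comes from $\THH(A)$ being an $\mathbb{E}_\infty$-ring spectrum (since $A$ is commutative) with $S^1$-action through ring maps, so each categorical fixed point $\THH(A)^{C_{p^n}}$ is a ring spectrum. Graded commutativity of $\pi_*$ is automatic. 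The pro-structure is given by the restriction maps $R$, which are ring maps by the $p$-cyclotomic construction. The differential $d$ is defined by exterior multiplication by the class $\sigma \in \pi_1^s(S^1_+)$ followed by the $S^1$-action map, and the Leibniz rule follows from the ring map property of the action. The identity $d^2=0$ uses that $\sigma^2 \in \pi_2^s(S^1_+)$ is $2$-torsion, which vanishes since $A$ is a $\Z_{(p)}$-algebra for odd $p$.

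Next I would unpack $\lambda$. Hesselholt--Madsen's isomorphism $\lambda\colon W_{n+1}(A) \cong \pi_0\THH(A)^{C_{p^n}}$ is already compatible with $R$ (the restriction of Witt vectors corresponds to the cyclotomic restriction), so it is a strict map of pro-rings. The Frobenius $F\colon E^*_\bullet \to E^*_{\bullet-1}$ is induced by the inclusion of fixed points $\THH(A)^{C_{p^n}} \hookrightarrow \THH(A)^{C_{p^{n-1}}}$, which is a map of ring spectra and commutes with $R$; the compatibility $\lambda F = F \lambda$ on $\pi_0$ is again established in \cite{HeMa97}. The Verschiebung $V$ is the equivariant transfer associated to the inclusion $C_{p^{n-1}} \subset C_{p^n}$. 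That $V$ is $E^*_\bullet$-linear (with module structure via $F$) is Frobenius reciprocity for the transfer; commuting with $R$ and with $\lambda$ reduces to the analogous statements on Witt vectors, which hold on $\pi_0$. The relation $FV = p$ follows from the double coset formula for $C_{p^{n-1}} \subset C_{p^n} \supset C_{p^{n-1}}$: the composite $\res \circ \tr$ is multiplication by the index $[C_{p^n}:C_{p^{n-1}}] = p$.

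The two remaining identities are the main obstacles since they intertwine the cyclotomic structure with the $S^1$-action. For $FdV = d$, I would factor $d$ through the $S^1$-transfer and use that the composite of inclusion, transfer, and action on the $S^1$-equivariant cell structure of $S^1_+$ reproduces $\sigma$, by an argument with the cofiber sequence $C_{p^{n-1}+} \to C_{p^n+} \to S^1/C_{p^{n-1}+}$ and careful bookkeeping of how the cyclotomic trace interacts with the transfer. This is essentially the computation in \cite[Lemma~1.4.2]{Hesselholt1996}. For the identity $Fd\lambda([a]_k) = \lambda([a]_{k-1})^{p-1} d\lambda([a]_{k-1})$, the idea is that the multiplicative lift $[a]_k$ in Witt coordinates corresponds to a specific $C_{p^k}$-fixed point class represented by $a \in \THH(A)$, and the Frobenius composed with the differential computes the homotopy of a $p$-fold smash power; the factor of $[a]_{k-1}^{p-1}$ arises from the Leibniz rule applied to the product $a \cdot a \cdots a$ ($p$ times) once we identify $F([a]_k) = [a]_{k-1}^p$ on $\pi_0$ and track the class of $\sigma$ through the $p$-fold smash.

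The hard part will be the last two relations: both require passing between the additive structure detected by the $S^1$-action and the cyclotomic restriction structure, and carrying out this comparison rigorously demands a careful geometric model for the cyclotomic structure on the cyclic bar construction and an explicit $S^1$-equivariant chain-level computation at the multiplicative representative $[a]_k$. Everything else is either formal (pro-ring / graded ring / $d^2=0$) or a direct consequence of the isomorphism on $\pi_0$ and general properties of transfers.
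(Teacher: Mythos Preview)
The paper does not prove this theorem; it is quoted from \cite{HeMa04} as background. That said, the paper's proof of the equivariant generalization (\Cref{thm:twisted_THH_is_eqvt_Witt} together with \Cref{prop:twisted_THH_power_rule}) follows essentially the same outline you sketch: the ring and differential structure come from the $S^1$-ring structure on $\THH$, the pro-structure from cyclotomicity, $\lambda$ from the $\pi_0$ identification, $FdV=d$ by citing \cite[Lemma~1.5.1]{Hesselholt1996}, and the power rule by an explicit simplicial computation with the cyclic bar construction. So your plan is correct and aligned with the literature.

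There is one small but genuine gap in your argument for $FV=p$. The double coset formula for $C_{p^{n-1}}\subset C_{p^n}$ does \emph{not} give multiplication by the index directly; it gives
\[
\res^{C_{p^n}}_{C_{p^{n-1}}}\tr^{C_{p^n}}_{C_{p^{n-1}}}(x)=\sum_{\gamma\in C_{p^n}/C_{p^{n-1}}}\gamma\cdot x,
\]
a sum over the Weyl group action. The missing step is that this Weyl action on $\pi_*\big(\THH(A)^{C_{p^{n-1}}}\big)$ is trivial, which holds because the $C_{p^n}$-action extends to the connected group $S^1$ and hence each $\gamma$ acts homotopically trivially. The paper makes exactly this point in its proof of the equivariant version. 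With that observation added, your verification of $FV=p$ goes through.
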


\begin{rmk}
If $p=2$ and $A$ is a $\mathbb{Z}_{(2)}$-algebra, the map $d$ defined in Section \ref{introTHH} does not square to zero. This necessitates a different definition for a 2-typical Witt complex. This was defined by Costeanu in \cite{Costeanu}. In the current work, we restrict to the case of odd primes. 
\end{rmk}

\section{Equivariant Hochschild theories}\label{equivariantHH}

In this section we recall the definitions of two equivariant Hochschild theories: twisted topological Hochschild homology and Hochschild homology for Green functors. We also recall necessary background from equivariant algebra to define the latter theory.  As the terminology suggests, the former theory is inherently topological while the latter is purely algebraic.  Nevertheless, a main result of \cite{BlGeHiLa} provides a close relationship between these theories which is central to our analysis of equivariant Witt complexes.

\subsection{Twisted topological Hochschild homology}

As noted in Section \ref{introTHH}, the topological Hochschild homology of a ring spectrum $R$, THH$(R)$, can be viewed as the equivariant norm $N_e^{S^1}R$. The norm perspective on THH lends itself well to generalizations. Indeed, for $C_n$ a finite cyclic group, and $R$ a genuine $C_n$-equivariant ring spectrum, Angeltveit, Blumberg, Gerhardt, Hill, Lawson, and Mandell \cite{AnBlGeHiLaMa} defined the $C_n$-twisted topological Hochschild homology of $R$, $\THH_{C_n}(R)$, as the norm $N_{C_n}^{S^1}(R)$. One can give an explicit construction of this norm in terms of a twisted cyclic bar construction, which we recall briefly here. More details of this construction can be found in \cite[Section 8]{AnBlGeHiLaMa}. 

\begin{defn}\label{defn_twisted_cyclic_bar}
 Let $R$ be an orthogonal $C_n$-ring spectrum indexed on the trivial universe $\mathbb{R}^{\infty}$. Let $g$ denote the generator $e^{2\pi i/n}$ of $C_n$. The $C_n$-\emph{twisted cyclic bar construction} on $R$, $N^{\cyc,C_n}_{\bullet}R$, is a simplicial $C_n$-spectrum, with $k$th level
    \[
N^{\cyc,C_n}_{k}R = R^{\wedge(k+1)}.
    \]
 In order to define the face and degeneracy maps, we define an operator $\alpha_k\colon R^{\wedge(k+1)} \to R^{\wedge(k+1)}$, which cyclically permutes the last factor to the front, and then acts on the new first factor by the generator $g$. Then the face and degeneracy maps of $N^{\cyc,C_n}_{\bullet}R$ are given by 
\[d_i = \begin{cases} \id^{\wedge i} \wedge \mu \wedge \id^{\wedge (k-i-1)} & \text{for } 0\leq i<k \\  (\mu \wedge \id^{\wedge (k-1)}) \circ \alpha_k& \text{for }i=k,
\end{cases}
\] 
and 
\[
s_i =  \id^{\wedge (i+1)} \wedge \eta \wedge \id^{\wedge (k-i)} \text{\quad for }  0\leq i \leq k.
\]
Here $\mu$ and $\eta$ denote the multiplication and unit maps of $R$, respectively.  
\end{defn}

\begin{rmk}
    As shown in \cite[\S8]{AnBlGeHiLaMa}, the twisted cyclic bar construction is naturally a $ \Lambda^\op_n $-object in the sense of B\"okstedt--Hsiang--Madsen \cite[Definition 1.5]{BHM}. Hence the geometric realization of the twisted cyclic bar construction inherits an action of $ S^1 $. 
\end{rmk}
This twisted cyclic bar construction gives a model for the equivariant norm. Therefore, we have the following definition of twisted topological Hochschild homology. 
\begin{defn}[{\cite[\S8]{{AnBlGeHiLaMa}}}]\label{defn:twisted_THH}
    For $R$ an orthogonal $C_n$-ring spectrum, the \emph{$ C_n $-twisted topological Hochschild homology} $ \THH_{C_n}(R) $ of $R$ is defined to be
    \[
    \THH_{C_n}(R) = N_{C_n}^{S^1}(R) = \mathcal{I}_{\mathbb{R}^{\infty}}^U |N^{\cyc,C_n}_{\bullet}(\mathcal{I}_{\widetilde{U}}^{\mathbb{R}^{\infty}}R)|.
    \]
  Here $U$ is a complete $S^1$-universe, $\widetilde{U} = i_{C_n}^* U$ is the pullback of $U$ to $C_n$, and $\mathcal{I}$ denotes a change of universe functor.  
\end{defn}

For a $C_n$-Green functor $\m{R}$, the Eilenberg--MacLane spectrum $H\m{R}$ is a $C_n$-ring spectrum, as shown in \cite{ullman2013tambara}. We write $\THH_{C_n}(\m{R})$ for the twisted topological Hochschild homology of $H\m{R}$.  When $n$ is relatively prime to $p$, twisted topological Hochschild homology exhibits a structure related to $p$-cyclotomicity.

\begin{thm}\label{twistedpcyclotomicTHH}
Let $R$ be an orthogonal $C_n$-ring spectrum and let $p$ be a prime.  If $p$ does not divide $n$, then the spectrum $\THH_{C_n}(R)$ is twisted $p$-cyclotomic.  In particular, there is a map
\[ \THH_{C_n}(R)\to \rho_p^*\Phi^{C_p} \THH_{C_n} (\phi_p^* R)\]
that is an equivalence; the inverse of this map is the twisted $p$-cyclotomic structure map.  Here $\phi_p\colon C_n\to C_n$ is the $p$th power map. Since $n$ is relatively prime to $p$, $\phi_p$ has finite order $\nu$.  Hence iterating this map provides an $S^1$-equivalence
\[ \THH_{C_n}(R)\to \rho^*_{p^\nu}\Phi^{C_{p^\nu}}\THH_{C_n}(R).\]
\end{thm}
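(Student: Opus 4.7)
The strategy is to adapt the classical edgewise subdivision argument (used to equip $\THH(R) = N_e^{S^1}(R)$ with a $p$-cyclotomic structure) to the twisted setting, tracking carefully how the $C_n$-twist in the last face operator of $N^{\cyc,C_n}_\bullet(R)$ interacts with the new $C_p$-action produced by $p$-fold subdivision. The hypothesis $(p,n)=1$ is what makes this tracking succeed.

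First, working on the point-set level, I would define a simplicial diagonal
\[
D_\bullet\colon N^{\cyc,C_n}_\bullet(\phi_p^* R) \longrightarrow \sd_p\, N^{\cyc,C_n}_\bullet(R),
\]
where in simplicial degree $k$ the map $R^{\wedge(k+1)} \to R^{\wedge p(k+1)}$ is the $p$-fold diagonal smash. The essential verification is that this respects face and degeneracy maps once one accounts for the cyclic permutation $\alpha_k$ twisted by $g \in C_n$ in the last face map of Definition~\ref{defn_twisted_cyclic_bar}: after traversing the entire cycle of the subdivided object one encounters $p$ copies of $g$, so on the target the iterated last face map effectively multiplies by $g^p$, which matches precisely the twist in $N^{\cyc,C_n}_\bullet(\phi_p^* R)$.

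Next I would analyze the $C_p$-action on the geometric realization $|\sd_p N^{\cyc,C_n}_\bullet R|$ coming from the edgewise subdivision. Standard results (see \cite{bokstedt_thh, BHM} and the twisted extension in \cite[\S8]{AnBlGeHiLaMa}) identify this $C_p$-action with the residual $C_p \subset S^1$ action on $|N^{\cyc,C_n}_\bullet R|$ coming from the cyclic structure. Because $\gcd(p,n)=1$, the $C_n$-twist and the $C_p$-subdivision action generate independent actions that together assemble into a $C_{pn}$-action; this is precisely the setting in which passage from categorical to geometric $C_p$-fixed points behaves well. Applying $\Phi^{C_p}$ to the realization of $D_\bullet$ and using the B\"okstedt-type equivalence $|X_\bullet|^{C_p} \simeq |\Phi^{C_p}\sd_p X_\bullet|$ on $C_p$-equivariant simplicial spectra yields an $S^1$-equivariant map
\[
|N^{\cyc,C_n}_\bullet(\phi_p^* R)| \xrightarrow{\ \simeq\ } \rho_p^* \Phi^{C_p}\, |N^{\cyc,C_n}_\bullet(R)|,
\]
which after the change-of-universe functors of Definition~\ref{defn:twisted_THH} gives the desired structure map with source $\THH_{C_n}(\phi_p^*R)$; inverting produces the twisted $p$-cyclotomic structure map. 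That this is an equivalence follows because the diagonal $D_\bullet$ is known to be a $C_p$-equivalence in the untwisted case, and the twist, being of order coprime to $p$, does not affect the underlying $C_p$-fixed-point computation.

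Finally, iterating the structure map $\nu$ times and using that $\phi_p^\nu = \id_{C_n}$ (where $\nu$ is the multiplicative order of $p$ modulo $n$) gives $\THH_{C_n}(R) \xrightarrow{\simeq} \rho^*_{p^\nu} \Phi^{C_{p^\nu}} \THH_{C_n}(R)$ as $S^1$-spectra, since the composite of change-of-twist functors $\phi_p^* \circ \cdots \circ \phi_p^*$ ($\nu$ times) is the identity. The main obstacle I expect is the bookkeeping in the first step: verifying at the level of simplicial identities that the $p$-fold diagonal commutes with the cyclically twisted face maps once the target has been reindexed by $\sd_p$. The coprimality hypothesis $(p,n)=1$ enters decisively both here (so that the twist by $g^p$ is consistent with a genuine $C_n$-action on $\phi_p^* R$, since $g \mapsto g^p$ is an automorphism of $C_n$) and in the geometric-fixed-points step (so that the $C_p$- and $C_n$-actions commute coherently).
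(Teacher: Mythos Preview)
The paper does not actually supply a proof of this theorem. It is stated as a result, and the following remark explains that it corrects \cite[Theorem~8.6]{AnBlGeHiLaMa} by inserting the twist by $\phi_p$, pointing also to \cite{KMN} for comparison. So there is no argument in the paper to compare your proposal against; the authors are treating this as a known (corrected) fact from the literature.

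That said, your outline is essentially the standard edgewise-subdivision / diagonal argument one would use, and the way you isolate where $\phi_p^*$ enters---through the accumulation of $p$ copies of the generator $g$ when traversing the subdivided cycle---is exactly the mechanism the remark alludes to. One bookkeeping point: the map you build has source $\THH_{C_n}(\phi_p^*R)$ and target $\rho_p^*\Phi^{C_p}\THH_{C_n}(R)$, whereas the theorem displays the equivalence with $R$ and $\phi_p^*R$ in the opposite slots. Since $\phi_p$ is an automorphism of $C_n$ this is harmless (substitute $\phi_p^{-1*}R$ for $R$, or equivalently observe that the algebraic analogue in the paper, equation~(\ref{algebraic_cyclotomic}) and the line after, is written with $\phi_p^*\m{R}$ inside the $\Phi^{C_p}$), but you should reconcile the labeling explicitly before claiming to have produced ``the desired structure map.'' Beyond that reindexing, your plan is sound and matches what the cited references carry out.
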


\begin{rmk}
It is claimed in \cite[Theorem 8.6]{AnBlGeHiLaMa} that twisted THH is $p$-cyclotomic when $p$ does not divide $n$. It was pointed out by John Rognes that the $p$-cyclotomic structure map in \cite{AnBlGeHiLaMa} is missing a twist by the $p$th power map. This can also be seen by comparing to the work of Krause--McCandless--Nikolaus on polygonic spectra in \cite{KMN}. The above statement corrects this error and describes the resulting twisted $p$-cyclotomic structure.  
\end{rmk}

\begin{rmk}\label{rmk:restrictionfortwistedpcycloTHH} When $n$ is relatively prime to $p$, this twisted $p$-cyclotomic structure induces maps $R$ defined as the composite
\[
R\colon \THH_{C_n}(R)^{C_{p^k}} =  (\rho_{p^\nu}^*\THH_{C_n}(R)^{C_{p^\nu}})^{C_{p^{k-\nu}}} \to (\rho_{p^\nu}^*\Phi^{C_{p^\nu}} \THH_{C_n}(R))^{C_{p^{k-\nu }}} \xrightarrow{\simeq} \THH_{C_n}(R)^{C_{p^{k-\nu}}}.
\]
\end{rmk}

Twisted topological Hochschild homology of equivariant spectra has an algebraic analogue, Hochschild homology for Green functors, which we recall in \Cref{subsect:HHGreen}. 
The relationship between twisted $ \THH $ and $ \HH $ for Green functors is analogous to the relationship between classical $ \THH $ and classical $ \HH $ described in Remark \ref{rmk:THH_HH_relationship_classical}. 
We will be able to make this relationship precise after recalling some foundations of equivariant algebra and the definition of Hochschild homology for Green functors, which occupy the remainder of this section. 

\subsection{Equivariant algebra} 
In this subsection we briefly recall the foundations of equivariant algebra, introducing some of the algebraic structures that arise in the study of equivariant homotopy theory. We begin by defining \emph{Mackey functors}, the equivariant analogue of abelian groups. 
\begin{defn} Let $G$ be a finite group.  Let $\mathcal{O}_G$ denote the \emph{orbit category} of $G$, that is, the category whose objects are finite sets with transitive $G$-action and whose morphisms are $G$-equivariant maps.  Let $\mathrm{Fin}_G$ denote the category of finite $G$-sets; note that $\mathrm{Fin}_G$ is the coproduct completion of $\mathcal{O}_G$.
\end{defn}
\begin{rmk}\label{rmk:Weyl_group}
    Let $ H \subset G $ be a subgroup, and suppose $ g \in G $ normalizes $ H $, in that $ g H g^{-1} = H $. 
    Then multiplication by $g$ induces an action on the set of cosets $ g \colon G/H \xto{\cong} G/H $. 
    In fact, the group of automorphisms of the object $ G/H $ in $ \mathcal{O}_G $ is isomorphic to the \emph{Weyl group of $ H $ in $ G $}, $ W_G(H) =  (N_GH)/H$. Here $N_GH$ denotes the normalizer of $ H $ in $ G $.  
\end{rmk}
\begin{defn}  Let $\Span(\Fin_G)$ denote the \emph{span category} of $\Fin_G$, that is, the category whose objects are those of $\Fin_G$ but where morphisms from $X$ to $Y$  are isomorphism classes of ``spans'' $X\leftarrow A\to Y$ of maps in $\Fin_G$.  Composition in $\Span(\Fin_G)$ is given by pullback.  The categorical product in $\Span(\Fin_G)$ is given on objects by disjoint union.
\end{defn}

\begin{rmk}
    The set of morphisms from $X$ to $Y$ in $\Span(\Fin_G)$ has a natural commutative monoid structure given by disjoint union on the objects that are the source of both maps in the spans.  
    The usual Burnside category $\mathcal{B}_G$  of $G$ is obtained from $\Span(\Fin_G)$ by group completing morphism sets.
\end{rmk}

\begin{defn}\label{defn:Mackeyfunctors} Let $G$ be finite group.  A \emph{$G$-Mackey functor} $\m{M}$ is a product-preserving functor $\Span(\Fin_G)\to \Ab$.  The category of $G$-Mackey functors is denoted $\Mack_G$. 
\end{defn}
A $G$-Mackey functor also gives rise to a Mackey functor for any subgroup $H$ of $G$.
\begin{defn}\label{defn:restrictionMackey} Let $S$ be a finite $H$-set.
The assignment $ S \mapsto G \times_H S $ induces functors $ \mathcal{O}_H \to \mathcal{O}_G $ and $ \Span(\Fin_H) \to \Span(\Fin_G) $. 
Precomposition with the latter gives a functor $ i^*_H \colon \Mack_G \to \Mack_H $. 
If $ \m{M} $ is a $ G $-Mackey functor, we call $ i^*_H(\m{M}) $ the \emph{restriction of $ \m{M} $ to $H$}. 
\end{defn}

\begin{rmk}
We could alternately define Mackey functors as follows.  A semi-Mackey functor is a product-preserving functor $\Span(\Fin_G)\to \Set$.  If $F$ is such a product-preserving functor, then for each $G$-set $X$, there is a commutative monoid structure on $F(X)$.  A Mackey functor is thus defined to be a semi-Mackey functor for which all of these monoids are group-complete. See \cite[Section 6]{Tambara_multitransfer} as well as \cite{BrunWittTambara,strickland2012tambara} for more on completion in this context.
\end{rmk}

The data of a Mackey functor $\m{M}$ has the following concrete description, which was Dress's original perspective \cite{Dress73}.  A Mackey functor $\m{M}$ consists of a pair of functors
\[
M^*, M_* \colon \mathcal{O}_G \to \Ab, 
\]
 such that $M^*$ is contravariant, $M_*$ is covariant, the values of $M^*$ and $M_*$ agree on every orbit $G/H$ in $\mathcal{O}_G$, and a double coset formula relates $M_*$ and $M^*$ on morphisms.  We write $\m{M}(G/H)$ for the shared value of $M_*$ and $M^*$ on the orbit $G/H$. Given a map $f\colon G/H\to G/K$ in $\mathcal{O}_G$, we call $M^*(f)$ the \emph{restriction} along $f$ and $M_*(f)$ the \emph{transfer} along $f$.  When $H$ is a subgroup of $K$ and $f\colon G/H\to G/K$ is the canonical quotient map, we write $M^*(f)=\res_H^K$ and $M_*(f)=\tr_H^K$. By Remark \ref{rmk:Weyl_group}, for each subgroup $ H \subset G $ the Weyl group $ W_G(H) $ acts on $ \m{M}(G/H) $. 
 For an accessible comparison of various descriptions of Mackey functors, see \cite[Chapter 1]{Bouc_GreenGSets}.

\begin{exmp}\label{ex:Burnside}
The \emph{Burnside ring} of a group $G$, denoted $A(G)$, is the group completion of the monoid of isomorphism classes of finite $G$-sets under disjoint union. The \emph{Burnside Mackey functor} for $G$, denoted, $\m{A}_G$, is defined by 
\[
\m{A}_G(G/H) = A(H).
\]
Let $H \subset K \subset G$.  Let $Z$ be a finite $H$-set and let $Y$  be a finite $K$-set. The transfer maps of $\m{A}_G$ are defined by $\tr_H^K([Z]) = [ K \times_H Z]$, and restriction maps are defined by $\res_H^K([Y]) = [i_H^K(Y)].$ Here $i_H^K$ denotes the restriction functor from finite $K$-sets to finite $H$-sets.   
\end{exmp}

Mackey functors are prominent in equivariant stable homotopy theory because they are the algebraic structure present on equivariant homotopy groups.  
For a genuine $G$-spectrum $X$, the equivariant homotopy groups of $X$ yield a $G$-Mackey functor $\m{\pi}_n^G(X)$ for each $n\geq0$, defined by 
\[ \m{\pi}_n^G(X)(G/H)=\pi_n(X^H)\,.\]

\begin{exmp}  
For any $G$-Mackey functor $\m{M}$, there is an Eilenberg--MacLane $G$-spectrum $H\m{M}$ that is defined by the property 
\[ \m{\pi}_n^G(H\m{M})=\begin{cases} \m{M} & n=0\\ 0 & n\neq 0.\end{cases}\]
\end{exmp}

The category $\Mack_G$ has a symmetric monoidal structure that arises as follows.  

\begin{defn}\label{rec:eqvtsmashproduct} %
 The category $\Span(\Fin_{G})$ inherits a symmetric monoidal structure from $ \mathrm{Fin}_{G} $ given on underlying objects by cartesian product of finite $ G $-sets.  The box product $\m{M}\square\m{N}$ of $G$-Mackey functors $\m{M}$ and $\m{N}$ is defined to be the left Kan extension in the diagram
\[
\begin{tikzcd}\Span(\Fin_G)\times \Span(\Fin_G)\ar[r,"{\m{M}\times\m{N}}"]\ar[d, "-\times-", swap]_{\times} & \Ab\times \Ab\ar[r,"{\otimes}"] &\Ab \\
\Span(\Fin_G)\ar[urr,"{\m{M}\square\m{N}}", swap,bend right=10]
\end{tikzcd}
\]
This construction is also known as \emph{Day convolution}, as described in \cite{BrunWittTambara}.  Its interaction with group completion is discussed in \cite[Appendix A]{strickland2012tambara}. 
\end{defn}
The Burnside Mackey functor for $G$, recalled in Definition \ref{ex:Burnside}, is the unit for the box product. There is also a graded version of the box product, for graded Mackey functors $\m{M}_*$ and $\m{N}_*$. 
\begin{defn}
For graded Mackey functors $\m{M}_*$ and $\m{N}_*,$ the \emph{graded box product} $\m{M}_* \square \m{N}_*$ is defined by 
\[
(\m{M}_* \square \m{N}_*)_k = \bigoplus_{i+j=k} \m{M}_i \square \m{N}_j.
\]
This gives the category of graded Mackey functors a symmetric monoidal structure.
\end{defn}

\begin{defn}
    Let $ G $ be a finite group. 
    A \emph{Green functor} for the group $ G $ is a monoid with respect to the box product in the category of $G$-Mackey functors. 
    We denote the category of $ G $-Green functors by $ \Green_G $.  Note in particular that a Green functor enjoys a ring structure at every level $\m{M}(A)$.
    A \emph{graded Green functor} for the group $ G $ is a monoid in the category of graded $G$-Mackey functors under the graded box product. Similarly, graded Green functors enjoy levelwise graded ring structures.
\end{defn}

\begin{rmk}
A $ G $-spectrum $ X $ is said to be \emph{connective} if its fixed points $ X^H $ are connective for all subgroups $ H \subset G $. 
   The functor $ \underline{\pi}_0 \colon \Spectra^G \to \Mack_G $ is symmetric monoidal when restricted to connective $ G $-spectra. 
   Therefore, the Eilenberg--MacLane spectrum functor $ H $ is lax symmetric monoidal and for $\m{R}$ a $G$-Green functor, $H\m{R}$ is a $G$-equivariant ring spectrum.
\end{rmk}

While commutative Green functors are the commutative ring objects in Mackey functors, they do not capture the full strength of equivariant commutativity.  This instead is encoded in the notion of a Tambara functor.  

\begin{defn}[{\cite{Tambara_multitransfer}}]\label{rec:Tambara_functor}  
Let $G$ be a finite group. A $G$-\emph{Tambara functor} is defined as a product preserving functor from a category of ``bispans'' to $\Set $, where a ``bispan'' or ``polynomial diagram'' is a diagram
\[ X\leftarrow A\to B\to Y\]
of $G$-sets.  A complete definition of this category can be found in \cite[Section 2.1]{MR3773736} or \cite[Section 6]{strickland2012tambara}; see also \cite{Tambara_multitransfer, BrunWittTambara}.

Restricting to those bispans so that $ A = B $, we see that a $ G $-Tambara functor has an underlying Mackey functor. 
The bispans given by fold  maps $ A^{\sqcup 2} \xleftarrow{=}A^{\sqcup 2} \xrightarrow{\nabla} A\xto{=}A $ are sent to multiplication maps making each $ \m{M}(A) $ into a ring---in particular a $ G $-Tambara functor has an underlying $ G $-Green functor. 
What sets a Tambara functor $ \m{M} $ apart from a Green functor is the additional data of \emph{norm maps} 
\begin{equation*}
    n_K^H \colon \m{M}(G/K)\to \m{M}(G/H) 
\end{equation*} 
for every canonical quotient map $ f \colon G/K\to G/H $ of finite $G$-sets.  This map is induced by the bispan $G/K \xleftarrow{=} G/K\xto{f} G/H \xto{=} G/H$. 
The maps $ n_K^H$ are multiplicative in that they preserve the underlying multiplicative monoid structure on the rings $\m{M}(G/K)$ and $\m{M}(G/H)$, but they are not ring maps. 
The category of all $ G $-Tambara functors is denoted by $ \Tamb_G $. 
Similarly to the case of Mackey functors, an inclusion of a subgroup $ H \subset G $ induces a functor on categories of bispans, which induces a \emph{restriction} functor $ i_H \colon \Tamb_G \to \Tamb_H $. 
For details, see \cite{BrunWittTambara}.  
\end{defn}

In \cite{BlGeHiLa}, the authors define a notion of geometric fixed points for Mackey functors.  Recall that a \emph{family} of subgroups of $ G $ is a collection of subgroups of $ G $ which is closed under subconjugacy, i.e. for a family $\mathcal{F}$, if $H \in \mathcal{F}$ and $g^{-1}Kg \subset H$, then $K \in \mathcal{F}$.  

\begin{defn}[{\cite[Definitions 5.4, 5.5]{BlGeHiLa}}]
For a finite group $G$ and a normal subgroup $N$ of $G$, let $\mathcal{F}_N$ denote the family of subgroups of $G$ which do not contain $N$. Let $E\mathcal{F}_N(\m{A}_G)$ denote the sub-Mackey functor of the Burnside Mackey functor $\m{A}_G$ which is generated by $\m{A}_G(G/H)$ for all subgroups $H \in \mathcal{F}_N$. Further, let 
\[
\widetilde{E}\mathcal{F}_N(\m{A}_G) = \m{A}_G/E\mathcal{F}_N(\m{A}_G). 
\]
If $\m{M}$ is a $G$-Mackey functor, let
\[
\widetilde{E}\mathcal{F}_N(\m{M}) = \m{M} \square \widetilde{E}\mathcal{F}_N(\m{A}_G). 
\]
\end{defn}
\begin{notation}\label{ntn:fixed_point_as_Mackey_for_quotient}
    Let $ N $ be a normal subgroup of $ G $. 
    We may regard any $ G/N $-set as a $ G $-set along the canonical projection $ G \to G/N $. This assignment promotes to a product-preserving functor $\Span(\Fin_{G/N}) \to \Span(\Fin_G)$. 
   Pullback along this functor induces $ \zeta_N \colon \Mack_G \to \Mack_{G/N} $. This functor is denoted $q_*$ in \cite{HiMeQu}, where it is shown to preserve Green functors and Tambara functors. 
\end{notation}

\begin{defn}[{\cite[Definition 5.10]{BlGeHiLa}}]
For a normal subgroup $N \subset G$, the $N$-\emph{geometric fixed points} of a $G$-Mackey functor $\m{M}$ is the $G/N$-Mackey functor defined by
\[
\Phi^N(\m{M}) = \zeta_N(\widetilde{E}\mathcal{F}_N(\m{M})).
\]
In \cite[Proposition 5.13]{BlGeHiLa}, the authors show that $\Phi^N$ is strong symmetric monoidal and hence lifts to a functor from $G$-Green functors to $G/N$-Green functors. 
\end{defn}

\begin{rmk}\label{eq:cat_to_geom_fixpt}There is a natural transformation of functors $ \Mack_G \to \Mack_{G/N} $,
    \begin{equation*}
        \zeta_N \to \Phi^{N},
    \end{equation*}
induced by the natural map 
\[
\m{M} \to \widetilde{E}\mathcal{F}_N(\m{M}).
\]    
As both $\Phi^N$ and $\zeta_N$ take $ G $-Green functors to $ G/N $-Green functors, the natural transformation in Remark \ref{eq:cat_to_geom_fixpt} yields a natural transformation of functors $\Green_G \to  \Green_{G/N}$.
\end{rmk}

For a $G$-Mackey functor $\m{M}$, and $N \subset G$ a normal subgroup, one can think of $\zeta_N \m{M}$ as the $N$-fixed points of $\m{M}$, and the natural transformation above as analogous to the usual map from fixed points to geometric fixed points in $G$-spectra. Indeed, in \cite{BlGeHiLa} and \cite{AKGH} the authors show that these notions of fixed points for Mackey functors are consistent with the notions in $ G $-spectra. In particular, they show that 
\[
 \Phi^N \m{M} \cong \m{\pi}_0^{G/N}(\Phi^NH\m{M}) 
\]
and
\[
\zeta_N(\m{M}) \cong \m{\pi}_0^{G/N}((H\m{M})^N). 
\]

We now recall the definition of the norm construction on a Mackey functor.

\begin{defn}[{\cite[Definition 5.9]{HillHopkins}}] \label{defn:Mackey_functor_norm}
    Let $G$ be a finite group, $H$ a subgroup of $G$, and $\m{M}$ an $H$-Mackey functor. The Mackey functor \emph{norm} $N_H^G\m{M}$ is the $G$-Mackey functor
    \[
    N_H^G\m{M}:= \m{\pi}_0N_H^G(H\m{M})
    \] 
    where $ N_H^G $ on the right-hand side of the definition is the Hill--Hopkins--Ravenel norm in spectra from \cite{HHR}.
\end{defn}
While the definition above highlights the relationships between norms in Mackey functors and norms in spectra, work of Mazur \cite{MThesis} and Hoyer \cite{Hoy} gives purely algebraic constructions of Mackey functor norms. Mazur's concrete construction applies to norms for cyclic $p$-groups, and Hoyer generalizes this to a description of the norm for any finite group $G$. 
Mackey functor norms also appear under a different guise in earlier work of Bouc \cite{Bouc}. 

The norm $ N \colon \Mack_H \to \Mack_{G} $ can be thought of as a multiplicative induction, or a tensor/box product over the cosets $ G/H $; this is made precise by the following result of Hoyer \cite[Corollary~2.6.1]{Hoy}: 
\begin{prop} \label{prop:norm_as_indexed_product}

Let $G$ be a finite abelian group. Let $H, \, K \subset G$ be subgroups and let $\underline{M}$ be an $H$-Mackey functor. Then there is a natural isomorphism of $ K$-Mackey functors
  \[ i_K^* N_H^G \underline{M} \cong \bigboxvoid_{\left|G/HK\right|} N_{K \cap H}^K (i_{K \cap H}^* \underline{M}). \] 
\end{prop}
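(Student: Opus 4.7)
The plan is to deduce this from the general Mackey-style double coset formula for the norm, then simplify using that $G$ is abelian. Conceptually, the norm $N_H^G$ is a ``multiplicative induction'' that can be described, via Hoyer's explicit construction, as an indexed box product over the $H$-set $G/H$ (or equivalently, the $K$-set obtained by restriction along $K \hookrightarrow G$). The key observation is that restricting the norm from $G$ to $K$ should be computable orbit-by-orbit on $G/H$ viewed as a $K$-set.

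First I would invoke (or re-derive from Hoyer's description) the general double coset formula for norms. For any finite group $G$, subgroups $H, K \subseteq G$, and $H$-Mackey functor $\underline{M}$, this reads
\[ i_K^* N_H^G \underline{M} \;\cong\; \bigboxvoid_{g \in K \backslash G / H} N_{K \cap {}^g\!H}^K \, c_g^* \, i_{H \cap {}^{g^{-1}}\!K}^{*}\underline{M}, \]
where ${}^g\!H = gHg^{-1}$ and $c_g$ is conjugation. This is the multiplicative analogue of the classical additive Mackey double coset formula, and appears as Hoyer's Corollary~2.6.1. The proof of this formula reduces, via Hoyer's levelwise description of the norm and its compatibility with restriction, to the decomposition of $G/H$ as a $K$-set: each $K$-orbit of $G/H$ has the form $K \cdot (gH) \cong K/(K \cap {}^g\!H)$, and the orbits are indexed by double cosets $K \backslash G/H$.

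Next, I would specialize to the case that $G$ is abelian. Then conjugation is trivial, so ${}^g\!H = H$ and $c_g = \id$ for every $g \in G$. Each summand in the box product therefore collapses to the same Mackey functor $N_{K \cap H}^K i_{K \cap H}^* \underline{M}$, independent of the chosen double coset representative. Moreover, since $G$ is abelian, $HK = KH$ is a subgroup of $G$, and every double coset $KgH$ equals the single coset $g(HK)$, giving a bijection $K \backslash G / H \cong G/HK$ and hence $|K \backslash G/H| = |G/HK|$. Substituting these simplifications into the double coset formula yields exactly
\[ i_K^* N_H^G \underline{M} \;\cong\; \bigboxvoid_{|G/HK|} N_{K \cap H}^K\, i_{K \cap H}^* \underline{M}, \]
as claimed.

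The main obstacle is the first step: the double coset formula for norms is not as widely known as its additive analogue, and verifying it requires working carefully with Hoyer's (or Mazur's) explicit indexed-box-product description of $N_H^G$ and tracing through how the restriction functor $i_K^*$ interacts with this indexing on the $K$-set $G/H$. Once that formula is in hand, however, the passage to the abelian case is purely a matter of observing that conjugation acts trivially and counting double cosets, both of which are elementary.
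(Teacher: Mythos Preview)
Your proposal is correct and matches the paper's approach: the paper does not give its own proof but simply records this proposition as a result of Hoyer \cite[Corollary~2.6.1]{Hoy}, exactly the reference you invoke. Your additional remarks spelling out how the general double coset formula for norms collapses in the abelian case (trivial conjugation, $K\backslash G/H \cong G/HK$) are correct and make explicit what the paper leaves to the citation.
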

In particular, for a $C_n$-Mackey functor $\m{M}$,
\begin{align*}
  N_{C_n}^{C_{n p^k}} \underline{M} (C_{n p^k} / e)  & =(i_{C_n}^*N_{C_n}^{C_{n p^k}} \underline{M}) (C_{n} / e)= \underline{M}^{\square p^k} (C_n / e) \\
  N_{C_n}^{C_{n p^k}} \underline{M} (C_{n p^k} / C_n)  & =(i_{C_n}^*N_{C_n}^{C_{n p^k}} \underline{M})(C_n/C_n)= \underline{M}^{\square p^k} (C_n / C_n) \\
  N_{C_n}^{C_{n p^k}} \underline{M} (C_{n p^k} / C_{p^q})  &  =(i_{C_{p^q}}^*N_{C_n}^{C_{n p^k}}
  \underline{M})(C_{p^q}/C_{{p^q}}) = \left(N_{e}^{C_{p^q}} i_e \underline{M} \right)^{\square \left| C_{n p^k} / C_{n p^q} \right|} (C_{p^q}/C_{p^q})
\end{align*}
where $0 < q \leq k$ and $n$ is relatively prime to $p$.

As for equivariant spectra, there is also a nice relationship between geometric fixed points and norms for Mackey functors. 

\begin{prop}[{\cite[Theorem 5.15]{BlGeHiLa}}] \label{propitem:norm_geom_fixpt}  Let $ N $ be a normal subgroup of $ G $ and let $ \m{M} $ be an $ H $-Mackey functor, for $H \subset G$. 
  Then there is a natural isomorphism of functors $ \Mack_H \to \Mack_{G/N} $
  \begin{equation*}
      \Phi^{N} \left(N_H^G\m{M}\right) \simeq N_{HN/N}^{G/N} \Phi^{H\cap N}\m{M}\,.
  \end{equation*}
\end{prop}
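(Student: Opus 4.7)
The plan is to deduce the Mackey-functor level isomorphism from the analogous equivalence of genuine equivariant spectra established by Hill--Hopkins--Ravenel \cite{HHR}, namely $\Phi^{N}(N_H^G X) \simeq N_{HN/N}^{G/N}\Phi^{H \cap N}X$ for an $H$-spectrum $X$. Specifically, we apply this equivalence to $X=H\m{M}$, the Eilenberg--MacLane spectrum on $\m{M}$, and pass to equivariant $\pi_0$.

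By \Cref{defn:Mackey_functor_norm} the Mackey-functor norm is defined by $N_H^G\m{M} = \m{\pi}_0 N_H^G(H\m{M})$, and the compatibility recalled just after \Cref{eq:cat_to_geom_fixpt} identifies $\Phi^{N}\m{P} \cong \m{\pi}_0^{G/N}(\Phi^{N} H\m{P})$ for any $G$-Mackey functor $\m{P}$. A chase through these identifications rewrites the left-hand side as
\[ \Phi^{N}(N_H^G \m{M}) \;\cong\; \m{\pi}_0^{G/N}\bigl(\Phi^{N} N_H^G(H\m{M})\bigr). \]
Applying the HHR equivalence to $N_H^G(H\m{M})$ and then running the compatibility identifications in reverse on the right yields
\[ \m{\pi}_0^{G/N}\bigl(\Phi^{N} N_H^G(H\m{M})\bigr) \;\cong\; \m{\pi}_0^{G/N}\bigl(N_{HN/N}^{G/N}\Phi^{H \cap N}(H\m{M})\bigr) \;\cong\; N_{HN/N}^{G/N}\Phi^{H \cap N}\m{M}, \]
completing the argument. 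Naturality in $\m{M}$ follows from the naturality of each identification used.

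The principal obstacle is verifying the interchanges above, namely that $\m{\pi}_0^{G/N}$ converts topological norms into algebraic Mackey-functor norms and topological geometric fixed points into algebraic ones. This hinges on connectivity considerations for $N_H^G(H\m{M})$ and for $\Phi^{H \cap N}(H\m{M})$, together with the fact that the algebraic constructions only see $\pi_0$ of the corresponding spectra. A more intrinsically algebraic alternative---likely the cleaner route in practice---invokes Mazur--Hoyer's explicit indexed-product formula for the norm (cf.\ \Cref{prop:norm_as_indexed_product}) combined with the formula $\Phi^{N}\m{P} = \zeta_N\bigl(\m{P}\square\widetilde{E}\mathcal{F}_N(\m{A}_G)\bigr)$; one then proves a projection formula
\[ N_H^G(\m{M})\square\widetilde{E}\mathcal{F}_N(\m{A}_G) \;\cong\; N_H^G\bigl(\m{M}\square i_H^*\widetilde{E}\mathcal{F}_N(\m{A}_G)\bigr) \]
and identifies the restricted family $i_H^*\widetilde{E}\mathcal{F}_N(\m{A}_G)$ in terms of $\widetilde{E}\mathcal{F}_{H \cap N}(\m{A}_H)$ to conclude. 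Either approach isolates the essential content as the compatibility of the norm with the family $\mathcal{F}_N$ of subgroups not containing $N$.
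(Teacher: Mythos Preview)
The paper does not prove this proposition; it is stated as a citation of \cite[Theorem 5.15]{BlGeHiLa} and used as input. There is therefore no proof in the present paper to compare your attempt against.

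That said, a brief comment on your approach. Your spectrum-level strategy is sound in outline, and you correctly flag the genuine content: the two interchange steps. The first, $\Phi^{N}(N_H^G\m{M}) \cong \m{\pi}_0^{G/N}\Phi^{N}\bigl(N_H^G H\m{M}\bigr)$, is not literally an instance of the compatibility you cite, since that gives $\m{\pi}_0^{G/N}\Phi^{N}\bigl(H(N_H^G\m{M})\bigr)$ with the Eilenberg--MacLane functor outside the norm; you must argue separately that $\m{\pi}_0^{G/N}\Phi^{N}$ of a connective $G$-spectrum depends only on its $\m{\pi}_0$. This follows because $\Phi^{N}$ is smashing with the connective object $\widetilde{E}\mathcal{F}_N$ and $\m{\pi}_0$ is monoidal on connective spectra. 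The second interchange, that $\m{\pi}_0 N_{HN/N}^{G/N}$ of a connective spectrum depends only on $\m{\pi}_0$, is less formal: the norm is not exact, so one cannot simply invoke connectivity of a fiber. One route is via the indexed-smash description of the norm together with monoidality of $\m{\pi}_0$ on connective spectra, but this needs to be checked across all fixed-point levels. Your alternative purely algebraic route via the projection formula and the identification of $i_H^*\widetilde{E}\mathcal{F}_N(\m{A}_G)$ is closer in spirit to how \cite{BlGeHiLa} actually argues and avoids these connectivity subtleties.
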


Finally, we recall a result about norms of Tambara functors.

\begin{prop}[{\cite[Proposition 6.9]{MR3773736} or \cite[\S2.3.4]{Hoy}}]\label{prop:Tambara_norm_restriction_adjunction}
    Let $G$ be a finite group and $H$ a subgroup of $G$. 
    Then the functor $ N^G_H $ lifts to a functor $ \Tamb_H \to \Tamb_G $ with right adjoint $i^*_H$.  
\end{prop}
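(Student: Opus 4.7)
The plan is to proceed via the description of $G$-Tambara functors as product-preserving functors from a category $\mathcal{P}_G$ of $G$-bispans to $\Set$ (as in Definition \ref{rec:Tambara_functor} and \cite{strickland2012tambara}). Under this description, the restriction functor $i_H^*\colon \Tamb_G \to \Tamb_H$ arises from precomposition with a functor $\tilde{\iota}\colon \mathcal{P}_H \to \mathcal{P}_G$ induced by the induction $\iota\colon \Fin_H \to \Fin_G$, $X \mapsto G \times_H X$; the fact that $\iota$ preserves pullbacks and disjoint unions ensures that bispans go to bispans and that composition is preserved.

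First I would construct the norm $N_H^G \m{M}$ of an $H$-Tambara functor $\m{M}$ as the left Kan extension of $\m{M}\colon \mathcal{P}_H \to \Set$ along $\tilde{\iota}$. By general nonsense this produces a presheaf on $\mathcal{P}_G$, but the main obstacle is to verify that the Kan extension actually preserves finite products, so that it defines a genuine Tambara functor rather than just a presheaf. This requires a combinatorial analysis of how coproducts in $\mathcal{P}_G$ interact with the induction functor, and can be carried out either by directly unpacking the explicit coend formula for the Kan extension, or, as in \cite{Hoy}, by giving a hands-on description of $N_H^G \m{M}$ as an indexed box product over the cosets $G/H$ along the lines of Proposition \ref{prop:norm_as_indexed_product} and verifying the Tambara axioms level-by-level.

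Once product-preservation is established, the adjunction $\Hom_{\Tamb_G}(N_H^G \m{M}, \m{N}) \cong \Hom_{\Tamb_H}(\m{M}, i_H^* \m{N})$ is immediate from the universal property of left Kan extension: a Tambara natural transformation out of $N_H^G \m{M}$ unwinds to a natural transformation $\m{M} \to i_H^* \m{N}$ compatible with the restriction, transfer, and multiplicative norm structures. As a final compatibility check, I would verify that the underlying Mackey functor of $N_H^G \m{M}$ agrees with the Mackey functor norm from Definition \ref{defn:Mackey_functor_norm}; this follows by restricting the Kan extension along the inclusion of the span subcategory $\Span(\Fin_H) \hookrightarrow \mathcal{P}_H$ and comparing against the explicit description of the Mackey-level norm coming from Mazur and Hoyer's constructions.
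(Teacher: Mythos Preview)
The paper does not give its own proof of this proposition; it is stated with citations to \cite[Proposition 6.9]{MR3773736} and \cite[\S2.3.4]{Hoy} and used as a black box. There is therefore no in-paper argument to compare your proposal against.

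That said, your outline is a reasonable sketch of how the cited results are proved, and you have correctly isolated the main difficulty: showing that the left Kan extension along $\tilde{\iota}$ lands in product-preserving functors. One point to tighten: you assert that preservation of pullbacks and disjoint unions by $\iota = G \times_H (-)$ suffices for $\tilde{\iota}\colon \mathcal{P}_H \to \mathcal{P}_G$ to be a well-defined functor on bispan categories. Composition of bispans involves not only pullbacks but also the dependent product functors $\Pi_f$ (the ``distributor'' step), so you must also verify that induction commutes with these; this is a Beck--Chevalley-type condition that does hold here but is not automatic from pullback-preservation alone. Apart from that, the Kan-extension framing and the alternative of an explicit indexed-box-product construction are both in line with the approaches in the cited references, with Hoyer's treatment being closer to the latter.
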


\subsection{Hochschild homology for Green functors} \label{subsect:HHGreen}
We recall the definition of Hochschild homology for Green functors from \cite{BlGeHiLa}. Let $ C_n $ denote the cyclic group of order $ n $ and let $ g \in C_n $ denote the generator $g=e^{2\pi i/n}$. 

\begin{defn} [{\cite[Definitions 2.20, 2.25]{BlGeHiLa}}] 
    Let $ \underline{R} $ be a $ C_n $-Green functor. 
    The \emph{$C_n$-twisted cyclic nerve of} $ \underline{R} $, denoted $\m{HC}^{C_n}_{\bullet}(\m{R})$, is a simplicial $ C_n $-Mackey functor with $ k $-simplices
    \begin{equation*}
        [k] \mapsto \underline{R}^{\square (k+1)}.
    \end{equation*}
    In order to define the face and degeneracy maps, we define an operator $\alpha_k\colon \m{R}^{\square(k+1)} \to \m{R}^{\square(k+1)}$, which cyclically permutes the last factor to the front, and then acts on the new first factor by the generator $g$. Then the face and degeneracy maps of $\m{HC}^{C_n}_{\bullet}(\m{R})$ are given by 
\[d_i = \begin{cases} \id^{\square i} \square \mu \square \id^{\square (k-i-1)} & \text{for } 0\leq i<k \\  (\mu \square \id^{\square (k-1)}) \circ \alpha_k& \text{for }i=k,
\end{cases}
\] 
and 
\[
s_i =  \id^{\square (i+1)} \square \eta \square \id^{\square (k-i)} \text{\quad for }  0\leq i \leq k.
\]
Here $\mu$ and $\eta$ denote the multiplication and unit maps of $\m{R}$, respectively.

    The \emph{$C_n$-twisted Hochschild homology} of $\m{R}$ is the homology 
    \[
    \m{\HH}_i^{C_n}(\m{R}) = H_i(\m{HC}^{C_n}_{\bullet}(\m{R})) \,
    \]
    of the $ C_n $-twisted cyclic nerve of $\m{R}$. 
\end{defn}
Here we use the equivalence between the category of simplicial Mackey functors and the category of non-negatively graded dg-Mackey functors, given by applying the Dold--Kan correspondence at each orbit. By definition, the homology of a simplicial Mackey functor is the homology of the associated normalized dg-Mackey functor (see \cite[\S4]{BlGeHiLa}).

The definition above allows us to take the $G$-twisted Hochschild homology of a $G$-Green functor, whenever $G \subset S^1$ is a finite subgroup. More generally, we will need the notion of $G$-twisted Hochschild homology for an $H$-Green functor, where $H\subset G \subset S^1$ are finite subgroups. The $G$-twisted Hochschild homology of an $H$-Green functor is defined by taking the $G$-twisted Hochschild homology of Mackey functor norms, as introduced in  \Cref{defn:Mackey_functor_norm}.
\begin{defn}[{\cite[Definition 3.19]{BlGeHiLa}}]
Let $H\subset G\subset S^1$ be finite subgroups, and let $\m{R}$ be a Green functor for $H$. The \emph{$G$-twisted cyclic nerve of $\m{R}$} is
\[
\m{HC}_H^G(\m{R})_{\bullet}:= \m{HC}^G_{\bullet}(N_H^G(\m{R})).
\]

The \emph{$G$-twisted Hochschild homology of $\m{R}$} is
\[
\m{\HH}_H^G(\m{R})_i := H_i(\m{HC}_H^R(\m{R})_{\bullet}).
\]
\end{defn}
This Hochschild homology for Green functors is an algebraic analogue of twisted topological Hochschild homology. 
Indeed, in \cite[Theorem 5.1]{BlGeHiLa} the authors prove that for $H \subset G \subset S^1$ finite subgroups, and $R$ a connective, commutative $H$-ring spectrum, there is a natural map
\begin{equation*} \label{eq:BGHLnatMap}
\m{\pi}_k^G \THH_H(R) \to \m{\HH}_H^G(\m{\pi}_0^H(R))_k,
\end{equation*}
which is an isomorphism when $k=0$. 
\begin{rmk}\label{rmk:norm_to_twisted_HH_quotient} 
    The natural map from the zeroth level of a connected chain complex to zeroth homology induces a canonical map  
    \[
    q \colon N_{C_n}^{C_{p^kn}}\m{R} \to \m{\HH}_{C_n}^{C_{p^kn}}(\m{R})_0 = \m{\mathbb{W}}_{C_{p^kn}}(\m{R}) \,.
    \]
    In fact, $ q = \m{\pi}_0(\eta_{S^1}) $ where $ \eta_{S^1}\colon N_{C_n}^{C_{p^kn}}H\m{R} \to i^*_{C_{p^kn}}N^{S^1}_{C_{p^kn}} N_{C_n}^{C_{p^kn}}H\m{R} \simeq i^*_{C_{p^kn}}\THH_{C_n}(\m{R}) $ is the unit of the $ (i^*_{C_{p^kn}},N^{S^1}_{C_{p^kn}}) $-adjunction. 
\end{rmk}

\section{Equivariant Witt vectors}\label{sec:EquivariantWitt}
As recalled in Section \ref{sec:background} above, for a ring $A$, the zeroth homotopy group of the fixed points of $\THH(A)$ with respect to the subgroup $ C_{p^k} \subset S^1 $ is isomorphic to the length $k+1$ $ p $-typical Witt vectors on $ A $ \cite[\S3.3]{HeMa97}.  
In \cite{BlGeHiLa}, Blumberg, Gerhardt, Hill, and Lawson define Witt vectors for Green functors for cyclic groups via analogy with the aforementioned result. 
In this section we recall the definitions from that paper, and construct new operators on these equivariant Witt vectors. 
\begin{defn}[{\cite[Definition 1.4]{BlGeHiLa}}]\label{defn:equivariant_Witt_vectors}
  	Let $H \subset G \subset S^1$ be finite subgroups. 
    The \emph{$ G $-Witt vectors} of an $H$-Green functor $\m{R}$ are defined by 
    \begin{equation*}
    \m{\mathbb{W}}_G(\m{R}) \coloneqq \m{\HH}_H^G(\m{R})_0.
    \end{equation*}
    Let $\m{R}$ be a Green functor for $C_n \subset S^1$. 
    Then the equivariant Witt vectors $\m{\mathbb{W}}_{C_{p^kn}}(\m{R})$ are referred to as the \emph{length} $k+1$ $p$-\emph{typical Witt vectors} of $\m{R}$. 
\end{defn}

\begin{rmk}\label{Wittcommutative}
When $\m{R}$ is a commutative $H$-Green functor, the equivariant Witt vectors $\m{\mathbb{W}}_G(\m{R})$ form a commutative Green functor by \cite[Proposition 2.26]{BlGeHiLa}.
\end{rmk}

\begin{rmk}\label{rmk:Witt_vectors_as_norm_quotient}
    Let $ \m{R} $ be a commutative $ C_n $-Green functor. 
    Then by the discussion immediately following \cite[Proposition 6.4]{BlGeHiLa}, there is an isomorphism $ \m{\mathbb{W}}_{C_{p^kn}}(\m{R}) \cong \left(N_{C_n}^{C_{p^kn}} \m{R} \right)_{C_{p^kn}} $, where the latter denotes the strict quotient with respect to the $ C_{p^kn} $-action. In particular, the map $q$ of \Cref{rmk:norm_to_twisted_HH_quotient} is the map $q\colon N_{C_n}^{C_{p^kn}}\m{R}\to \m{\mathbb{W}}_{C_{p^kn}}(\m{R})$ to this strict quotient.
\end{rmk}
\begin{exmp} \cite[Proposition 6.1]{BlGeHiLa}\label{ex:Wittvec_as_e_eqvt}
    Let $R$ be a commutative ring. We consider this as a commutative $H$-Green functor for $ H = \{e\}$, the trivial group. Let $ G = C_{p^k} $. Then \Cref{defn:equivariant_Witt_vectors} recovers the classical length $ k+1 $ $ p$-typical Witt vectors in the sense that
    \[
    \m{\mathbb{W}}_{C_{p^k}}(R)(C_{p^k}/C_{p^k}) \cong W_{k+1}(R).
    \] 
\end{exmp}
\begin{exmp}\label{Burnside_eqvt_Witt_vectors}
	Let $H \subset G \subset S^1$ be finite subgroups and $\m{A}_H$ be the Burnside Mackey functor for $H.$ Then
	$$ \m{\mathbb{W}}_G (\m{A}_H) \cong \m{A}_G.$$
The equivalence follows from the fact that $N_H^G \m{A}_H \cong \m{A}_G$ and the Weyl group acts as the identity on isomorphism classes of spans. 
\end{exmp}

More generally, we have the following.
\begin{exmp}\label{trivial_weyl_Witt}
	Let $H \subset G \subset S^1$ be finite subgroups and suppose $\m{R}$ is an $H$-Green functor such that $N_H^G \m{R}$ is a commutative Green functor with trivial Weyl action. Then 
	\[ \m{\mathbb{W}}_G(\m{R}) = N_H^G \m{R}.\] 
\end{exmp}

In Section \ref{introWitt}, we recalled the definitions of the operators $F, V, R,$ and $[-]_k$ for classical $p$-typical Witt vectors. We now consider the analogous operators for the $p$-typical Witt vectors in the equivariant case. 

For $\m{R}$ a $C_n$-Green functor and $m$ dividing $n$, the map 
\[
F\colon \m{\mathbb{W}}_{C_{p^kn}}(\m{R})(C_{p^kn}/C_{p^jm}) \to \m{\mathbb{W}}_{C_{p^kn}}(\m{R})(C_{p^kn}/C_{p^{j-1}m})\]
is given by the restriction map $\res_{C_{p^{j-1}m}}^{C_{p^jm}}$ of the $C_{p^kn}$-Mackey functor $\m{\mathbb{W}}_{C_{p^kn}}(\m{R})$. Similarly, the map 
\[
V\colon \m{\mathbb{W}}_{C_{p^kn}}(\m{R})(C_{p^kn}/C_{p^{j-1}m}) \to \m{\mathbb{W}}_{C_{p^kn}}(\m{R})(C_{p^kn}/C_{p^{j}m})
\]
is given by the transfer map $\tr_{C_{p^{j-1}m}}^{C_{p^jm}}$.

We now define the restriction map $r$ on the $p$-typical Witt vectors of $\m{R}$. Let $\phi_p\colon C_n \to C_n$ denote the $p$th power map.  We recall from \cite[Proposition 5.17]{BlGeHiLa} that there is an isomorphism of simplicial Green functors 
\begin{equation}\label{algebraic_cyclotomic}
\Phi^{C_p}(\m{HC}_{C_n}^{C_{p^kn}}(\phi_p^*\m{R})_{\bullet}) \cong \m{HC}_{C_{pn}/C_p}^{C_{p^kn}/C_p}(\phi_p^*\m{R})_{\bullet}.
\end{equation}
Observe that the right-hand side can be identified as 
\[
\m{HC}_{C_{pn}/C_p}^{C_{p^kn}/C_p}(\phi_p^*\m{R})_{\bullet} \cong \m{HC}_{C_{n}}^{C_{p^{k-1}n}}(\m{R})_{\bullet}.
\]
We then take the zeroth homology of the associated dg-Green functors in \ref{algebraic_cyclotomic}. This yields an isomorphism
\begin{equation}\label{WittTwistedCyclotomic}
\Phi^{C_p}\m{\W}_{C_{p^kn}}(\phi_p^*\m{R}) \cong \m{\W}_{C_{p^{k-1}n}}(\m{R}).
\end{equation} 
Recall that since $n$ is prime to $p$, $\phi_p$ has a finite order $\nu$. Hence by iteration of \ref{WittTwistedCyclotomic} we get an isomorphism 
\begin{equation}\label{WittCyclotomic}
\Phi^{C_{p^{\nu}}}\m{\W}_{C_{p^kn}}(\m{R}) \cong \m{\W}_{C_{p^{k-\nu}n}}(\m{R}).
\end{equation}

\begin{defn}\label{defn:eqvt_Witt_restriction_maps} 
 For $\m{R}$ a $C_n$-Green functor, define a \emph{restriction map} on the $p$-typical Witt vectors of $\m{R}$
 \[
r \colon \zeta_{C_{p^{\nu}}}\m{\W}_{C_{p^kn}}(\m{R}) \to \m{\W}_{C_{p^{k-\nu}n}}(\m{R})
 \]
  by taking the canonical morphism of $C_{p^{k-\nu}n}$-Green functors
    \begin{equation*}
       \zeta_{C_{p^{\nu}}}\m{\W}_{C_{p^kn}}(\m{R}) \to \Phi^{C_{p^{\nu}}}\m{\W}_{C_{p^kn}}(\m{R})
    \end{equation*}
    of  Remark \ref{eq:cat_to_geom_fixpt} and composing it with the isomorphism of Equation \ref{WittCyclotomic}.
\end{defn}

\begin{rmk}\label{rmk:restrictionmapsWitt}
Observe that 
\[
 \zeta_{C_{p^{\nu}}}\m{\W}_{C_{p^kn}}(\m{R})(C_{p^{k-\nu}n}/C_{p^{k-\nu}}) \cong  \m{\W}_{C_{p^kn}}(\m{R})(C_{p^{k}n}/C_{p^{k}}).
\]

   So, the restriction maps of Definition \ref{defn:eqvt_Witt_restriction_maps} induce ring homomorphisms 
     \begin{equation*}
       r\colon \m{\W}_{C_{p^kn}}(\m{R})(C_{p^kn}/C_{p^k}) \to \m{\W}_{C_{p^{k-\nu}n}}(\m{R})(C_{p^{k-\nu}n}/C_{p^{k-\nu}}).
    \end{equation*}
\end{rmk}

\begin{rmk}
	We observe that the restriction on equivariant Witt vectors was defined \emph{as a morphism of $ C_{p^{k-\nu}n} $-Green functors}. Hence the restriction map $ r $ commutes with both $ F $ and $ V $, as they are part of the structure of the Green functors.  
\end{rmk}

We note that there is a clash of terminology here, as the restriction maps in the Mackey functor induce the Frobenius maps on equivariant Witt vectors rather than the maps that are called the restriction on equivariant Witt vectors. The use of these terms in this paper is consistent with the literature on topological Hochschild homology and Witt vectors. However, in order to avoid possible confusion, we point out that there are multiple uses of the term restriction that do not correspond to one another.

Finally, we define an equivariant analogue of the multiplicative lift $[-]_k\colon A \to W_k(A). $ 
\begin{defn}\label{defn:multiplicative_lift}
Let $\m{R}$ be a $C_n$-Tambara functor. For $m$ dividing $n$, we define the \emph{multiplicative lift map}
\[
 [-]_k:  \underline{R}(C_n / C_m) \to \underline{\W}_{C_{p^k n}} (\underline{R}) (C_{p^k n} / C_{p^km})
\]
as the composite
\begin{equation*}
\begin{tikzcd}[row sep=small]
	\m{R}(C_n/C_m)
	\ar[r,"\eta"] & i^*_{C_n}N_{C_n}^{C_{p^kn}}\m{R}(C_n/C_m) \ar[d,"{\rotatebox[origin=c]{270}{$\cong$}}" {xshift=2pt}] & & \\	 
     & N_{C_n}^{C_{p^kn}}\m{R}(C_{p^kn}/C_m) \ar[r,"n_{C_m}^{C_{p^km}}"] & N_{C_n}^{C_{p^kn}}\m{R}(C_{p^kn}/C_{p^km}) \ar[r,"q"] &
	  \m{\W}_{C_{p^kn}}(\m{R})(C_{p^kn}/C_{p^km}).
\end{tikzcd}
\end{equation*}

Here the first map $\eta$ is the unit map of the adjunction of Proposition \ref{prop:Tambara_norm_restriction_adjunction} and the isomorphism holds by definition of the restriction functor $i^*_{C_n}$. 
The map denoted $n_{C_m}^{C_{p^k}}$ is the internal norm from $C_m$ to $C_{p^km}$ in the Tambara functor $N_{C_n}^{C_{p^kn}}(\m{R})$, and the final map is the quotient map of \Cref{rmk:Witt_vectors_as_norm_quotient}.
Observe that since the internal norm map $n$ is multiplicative, but not additive, the multiplicative lift map fails to be additive.
\end{defn}

\begin{rmk}
    In \cite[Theorem 6.13]{BlGeHiLa}, for $\m{R}$ a $C_n$-Tambara functor, and $m$ dividing $ n$, the authors define a multiplicative lift map:
    \begin{align*}
      \underline{R}(C_n / C_m) \to
      \underline{\W}_{C_n} (\underline{R}) (C_n / C_n), 
    \end{align*}
    which they refer to as a ``Teichm\"uller map'' on equivariant Witt vectors. 
    The multiplicative lift maps needed for the current work are not of this form. For one thing, our multiplicative lift maps almost never land at orbits of the form $C_{n} / C_{n}$.  The only case where both constructions are defined is when $k=0$ and $m=n$, where the two constructions agree.
\end{rmk}

\begin{rmk}\label{rmk:externalizingnorms}
In \Cref{defn:multiplicative_lift}, we defined the multiplicative lift as a composite of maps between Tambara functors and an internal norm map in a given Tambara functor. Because Tambara functors are the $G$-commutative monoids in the $G$-symmetric monoidal category of $G$-Mackey functors \cite{Hoy}, this internal norm can be ``externalized'' using representability. See, for example, the discussion in \cite[Section 8]{ChanBiIncomplete}. An element $a\in N^{C_{p^kn}}_{C_n}\m{R}(C_{p^kn}/C_m)$ is represented by a map of $C_m$-Mackey functors
	\[\m{A}_{C_m}\xto{a} i_{C_m}^*N^{C_{p^kn}}_{C_n}\m{R}\]
	where $\m{A}_{C_m}$ is the Burnside Tambara functor for the cyclic group of order $m$.  The element $n_{C_m}^{C_{p^km}}(a)\in N^{C_{p^kn}}_{C_n}\m{R}(C_{p^kn}/C_{p^km})$ can then be defined as the element represented by the map of $C_{p^km}$-Mackey functors
	\[\m{A}_{C_{p^km}}\cong N^{C_{p^km}}_{C_m} \m{A}_{C_m}  \xto{N_{C_m}^{C_{p^km}}a} N_{C_m}^{C_{p^km}}i^*_{C_m}N^{C_{p^kn}}_{C_n}\m{R} \xto{\varepsilon} i^*_{C_{p^km}} N^{C_{p^kn}}_{C_n}\m{R},\]
	where $\m{A}_{C_{p^km}}$ is the $C_{p^km}$-Burnside Tambara functor and $\varepsilon$ is the counit of the $(N^{C_{p^km}}_{C_m},i^*_{C_m})$-adjunction between $C_m$- and $C_{p^km}$-Tambara functors.   This perspective on norms is useful in calculating multiplicative lifts in the proof of \Cref{prop:liftsagree} and later in \Cref{sec:TwistedTHHEquivariantWitt}.
\end{rmk}

Below we give two concrete examples of multiplicative lift maps of the form
\[
\m{R}(C_n/e)\to \m{\mathbb{W}}_{C_{p^kn}}(\m{R})(C_{p^kn}/C_{p^k}).
\] 
\begin{exmp}[Constant Tambara Functor]\label{const_mult_lift}
Let $p=3$, and consider the constant $C_2$-Tambara functor $\m{\mathbb{F}}_3$. The multiplicative lift map when $k=m=1$ is the map
\begin{center}
\begin{tikzcd}
    \m{\mathbb{F}}_3(C_2/e) \ar[r,"{[-]_1}"] & 
    \m{\mathbb{W}}_{C_{6}}(\m{\mathbb{F}}_3)(C_{6}/C_{3})
\end{tikzcd}
\end{center}
defined as the composite
\begin{equation*}
\begin{tikzcd}[column sep=small]
    \mathbb{F}_3= \m{\mathbb{F}}_3(C_2/e)
            \ar[r,"\eta"] & i^*_{C_2}N_{C_2}^{C_6}\m{\mathbb{F}}_3(C_2/e) \cong N_{C_2}^{C_6}\m{\mathbb{F}}_3(C_6/e) \ar[r,"n_e^{C_3}"] & N_{C_2}^{C_6}\m{\mathbb{F}}_3(C_6/C_3) \ar[r,"q"] & \m{\mathbb{W}}_{C_{6}}(\m{\mathbb{F}}_3)(C_{6}/C_{3}) \,.
\end{tikzcd} 
\end{equation*}
Using Proposition \ref{prop:norm_as_indexed_product},
we recognize the codomain of $\eta$ to be
    \begin{equation}
        i^*_{C_2}N_{C_2}^{C_6}\m{\mathbb{F}}_3\cong \m{\mathbb{F}}_3^{\square 3}.
    \end{equation}
    Evaluated on the orbit $C_2/e$, the box product reduces to the tensor product and $\eta\colon \mathbb{F}_3 \to \mathbb{F}_3^{\otimes 3}$ is the map $\alpha\mapsto (\alpha, 1, 1)$.
    The isomorphism $i^*_{C_2}N_{C_2}^{C_6}\m{\mathbb{F}}_3(C_2/e) \cong N_{C_2}^{C_6}\m{\mathbb{F}}_3(C_6/e)$ is the identity on $\mathbb{F}_3$. 

    To understand the internal norm map $n_e^{C_3}$, we can work in the Tambara functor $i_{C_3}^*N_{C_2}^{C_6}\m{\mathbb{F}}_3$. In particular, by \Cref{prop:norm_as_indexed_product},
    we obtain:
    \begin{align*}
        N_{C_2}^{C_6}(\m{\mathbb{F}}_3)(C_6/C_3) &= i^*_{C_3} N_{C_2}^{C_6}\m{\mathbb{F}}_3(C_3/C_3) \\
        &\overset{}{\cong} N_e^{C_3}(i^*_e \m{\mathbb{F}}_3)(C_3/C_3). 
        \end{align*}
     Proposition 5.23 of \cite{BlGeHiLa} then gives the identification $N_{C_2}^{C_6}\m{\mathbb{F}}_3(C_6/C_3) \cong \mathbb{Z}/9$, which can also be computed using the formulas of \cite[p.~90]{MThesis}. A similar string of identifications allows us to deduce that $n_e^{C_3}\colon N_{C_2}^{C_6} \m{\mathbb{F}}_3 (C_6 / C_e) \to N_{C_2}^{C_6} \m{\mathbb{F}}_3 (C_6 / C_3)$ is the map $n_e^{C_3}\colon {\mathbb{F}}_3 \to \mathbb{Z} / 9$ defined by $0 \mapsto 0,$ $1 \mapsto 1,$ and $-1 \mapsto -1.$

    As in Example \ref{trivial_weyl_Witt}, since the Weyl action is trivial, we have an isomorphism $\m{\HH}_{C_2}^{C_{6}}(\m{\mathbb{F}}_3)_0 \cong N_{C_2}^{C_6}\m{\mathbb{F}}_3$ and the quotient $N_{C_2}^{C_6}\m{\mathbb{F}}_3(C_6/C_3)\xto{q} {\m{\HH}_{C_2}^{C_6}}(\m{\mathbb{F}}_3)_0(C_6/C_3)$ is the identity on $\Z/9$.

    In summary, the multiplicative lift map is
\[              \mathbb{F}_3 \xto{\alpha\mapsto (\alpha\otimes 1\otimes 1)}  \mathbb{F}_3^{\otimes 3} \cong \mathbb{F}_3 \xto{n_e^{C_3}}  \Z/9 \xto{\id} \Z/9.
\]
The only nontrivial map here is $n_e^{C_3}$, and so the multiplicative lift is simply the ``obvious'' multiplicative map given by $0\mapsto 0$, $1\mapsto 1$, and $-1\mapsto -1$.  
\end{exmp}

\begin{exmp}[Burnside Tambara Functor]\label{burnside_mult_lift}
    The $k$th multiplicative lift map for $\m{A}_{C_n}$ is the map 
    \begin{center}
        \begin{tikzcd}
            \m{A}_{C_n}(C_n/e) \ar[r,"{[-]_k}"] & \m{\mathbb{W}}_{C_{p^kn}}(  \m{A}_{C_n})(C_{p^kn}/C_{p^k}) =\m{A}_{C_{p^kn}}(C_{p^kn}/C_{p^k})
        \end{tikzcd}
    \end{center}
given by the composite in \Cref{defn:multiplicative_lift}.
Since the Burnside Tambara functor is the unit for the box product and the norm preserves the unit,  $N_{C_n}^{C_{p^kn}}\m{A}_{C_n}\cong\m{A}_{C_{p^kn}}$.  Using \Cref{prop:norm_as_indexed_product},
we identify
\begin{equation*}
    i^*_{C_n} \m{A}_{C_{p^kn}} \cong \m{A}_{C_{n}}^{\square p^k} \cong \m{A}_{C_{n}}.
\end{equation*}
Under this identification, the map $\eta\colon \m{A}_{C_n}(C_n/e)\to i_{C_n}^*\m{A}_{C_{p^kn}}(C_{n}/e)$ is the identity.  The Weyl action on the Burnside ring Mackey functor is the trivial action and thus the quotient map $q\colon \m{A}_{C_{p^kn}}\to \m{\mathbb{W}}_{C_{p^kn}}(\m{A}_{C_n})$ is the identity as well.  Hence, the multiplicative lift $[-]_k\colon \m{A}_{C_n}(C_n/e)\to \m{\mathbb{W}}_{C_{p^kn}}(\m{A}_{C_n})(C_{p^kn}/C_{p^k})$ is simply the norm map
\[
 n_e^{C_{p^k}}:  \m{A}_{C_{p^kn}}(C_{p^kn}/e) \to  \m{A}_{C_{p^kn}}(C_{p^kn}/C_{p^k})
\]
By \cite[Example 1.4.6]{MThesis} , this internal norm map is given by
\begin{equation*}
\begin{split}
    A(e) &\overset{n_e^{C_{p^k}}}{\longrightarrow} A(C_{p^k}) \\
    [M] &\mapsto [\{\textnormal{functions of sets }C_{p^k} \to M\}],
\end{split}
\end{equation*}
where $ M $ denotes a finite set. Here $A(G)$ denotes the Burnside ring of the group $G$, as in Example \ref{ex:Burnside}.
\end{exmp}

Recall from Section \ref{introWitt} that classically the multiplicative lift map $[-]_k$ for Witt vectors lifts the $p^k$-power map. We now verify that our multiplicative lift on equivariant Witt vectors $[-]_k$ also lifts the $p^k$ power map. 
First, we recall that given an inclusion of a subgroup $ H \subset G $, the restriction functor $i_H^*\colon \Mack_G \to \Mack_H $ of Definition \ref{defn:restrictionMackey} admits both left and right adjoints, referred to as induction and coinduction respectively. 
Moreover, when $ G/H $ is finite, the induction and coinduction functors are canonically isomorphic. 
The unit of this adjunction, which we denote $\m{\res}$ following \cite[Definition 6.9]{BlGeHiLa}, is a kind of extension to Mackey functors of the internal Mackey functor restriction maps $\res_H^G.$ Suppose $\m{R}$ is a $C_n$-Mackey functor. On the $p$-typical equivariant Witt vectors of $\m{R}$, the unit of this adjunction gives a map 
\begin{equation}\label{eq:equivariant_Witt_adjunction_unit}
    \m{\res}\colon \m{\mathbb{W}}_{C_{p^kn}} (\m{R}) \to \operatorname{CoInd}_H^{C_{p^kn}} i^*_H \m{\mathbb{W}}_{C_{p^kn}} (\m{R}) 
\end{equation}
for any $ H \subset C_{p^kn}$.

In \cite[Proposition 5.19]{BlGeHiLa}, the authors show that the restriction $i_H^*\m{HC}_{C_n}^{C_{p^kn}}(\m{R})_{\bullet}$ of the simplicial Mackey functor from which $\m{\mathbb{W}}_{C_{p^kn}}(\m{R})$ is defined is a subdivision of the simplicial Mackey functor $\m{HC}_{C_n}^{H}(\m{R})_{\bullet} $ from which $\m{\mathbb{W}}_{H}(\m{R})$ is defined. Since the homology of the subdivided complex is isomorphic to the homology of the original complex, this shows that $i_{H}^*\m{\mathbb{W}}_{C_{p^kn}}(\m{R})\cong \m{\mathbb{W}}_{H}(\m{R})$.  Thus we may identify the target of $\m{\res}$ as $\CoInd_H^{C_{p^kn}}\m{\mathbb{W}}_H(\m{R}).$

\begin{prop} \label{prop:multLift}
Let $\m{R}$ be a $C_n$-Tambara functor and let $p$ be coprime to $n$. For any $m$ dividing $n$, the multiplicative lift map $[-]_k$ of Definition \ref{defn:multiplicative_lift} lifts the $p^k$th power map; i.e.~the following diagram commutes:
 \[
    \begin{tikzcd}
        & \m{\mathbb{W}}_{C_{p^kn}}(\m{R})(C_{p^kn}/C_{p^km}) \ar[d,"\m{\res}"] \\
        \m{R}(C_n/C_m) \ar[r,"r\mapsto r^{p^k}"] \ar[ur,"{[-]_k}"] & \m{\mathbb{W}}_{C_n}(\m{R})(C_n/C_m).
    \end{tikzcd}
\]
This diagram uses the natural isomorphism $\CoInd_{C_n}^{C_{p^kn}}\m{\mathbb{W}}_{C_n}(\m{R})(C_{p^kn}/C_{p^km})\cong
\m{\mathbb{W}}_{C_n}(\m{R})(C_n/C_m)$.
\end{prop}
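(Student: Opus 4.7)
I would reduce the claim to a computation inside the Tambara functor $\widetilde{\m{R}} \coloneqq N_{C_n}^{C_{p^kn}}\m{R}$ and invoke a standard identity for the composite $\res \circ n$ in Tambara functors.

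First, by naturality of the Mackey restriction $\m{\res}$ of \eqref{eq:equivariant_Witt_adjunction_unit} applied to the quotient $q\colon \widetilde{\m{R}} \to \m{\mathbb{W}}_{C_{p^kn}}(\m{R})$, we have $\m{\res}\circ q = \CoInd_{C_n}^{C_{p^kn}}(i^*_{C_n} q) \circ \m{\res}$. Evaluated at the orbit $C_{p^kn}/C_{p^km}$ and using the double-coset description of $\CoInd$ together with $C_n\cap C_{p^km} = C_m$ and $|C_n\backslash C_{p^kn}/C_{p^km}|=1$ (both of which use $\gcd(p,n) = 1$), the map $\m{\res}$ becomes the Mackey restriction $\res_{C_m}^{C_{p^km}}$ in $\widetilde{\m{R}}$. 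Hence
\[
\m{\res}\bigl([-]_k(r)\bigr) = (i^*_{C_n} q)\bigl(\res_{C_m}^{C_{p^km}}(n_{C_m}^{C_{p^km}}(\eta(r)))\bigr).
\]

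Next, I would invoke the standard Tambara identity for abelian ambient groups: for a $G$-Tambara functor $\m{S}$, subgroups $K\subset L$, and $s\in\m{S}(G/K)$,
\[
\res_K^L\bigl(n_K^L(s)\bigr) = \prod_{g\in L/K} g\cdot s,
\]
where the product is taken in the ring $\m{S}(G/K)$ and $g\cdot s$ denotes the Weyl action. Under the isomorphism $\widetilde{\m{R}}(C_{p^kn}/C_m) \cong \m{R}(C_n/C_m)^{\otimes p^k}$ furnished by \Cref{prop:norm_as_indexed_product}, the Weyl action of $C_{p^km}/C_m \cong C_{p^k}$ is realized as the cyclic permutation of the $p^k$ tensor factors.

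The heart of the argument is then to identify the unit $\eta(r)$ with the element $r\otimes 1\otimes\cdots\otimes 1$ in $\m{R}(C_n/C_m)^{\otimes p^k}$. This follows from the universal property of the $(N_{C_n}^{C_{p^kn}}, i^*_{C_n})$-adjunction for Tambara functors: one checks by unwinding the norm that the adjoint transpose of $r\otimes 1\otimes\cdots\otimes 1$ is the identity morphism of $\widetilde{\m{R}}$. Consequently the product of Weyl translates of $\eta(r)$ collapses to the diagonal $r\otimes r\otimes\cdots\otimes r$. A parallel universal-property argument identifies $i^*_{C_n} q$ with the iterated multiplication $\mu\colon \m{R}^{\square p^k} \to \m{R}$, since at the $C_n$-level the twisted-commutativity relations defining $\m{\mathbb{W}}_{C_n}(\m{R})$ reduce to ordinary commutativity. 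Applying $\mu$ to $r^{\otimes p^k}$ yields $r^{p^k}$, completing the verification.

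The main obstacle is the concrete identification of the adjunction unit $\eta(r)$, which rests on an explicit understanding of the Tambara-functor norm $N_{C_n}^{C_{p^kn}}$; the Tambara identity for $\res\circ n$ is a classical consequence of bispan composition in the abelian case, and the identification of $i^*_{C_n} q$ with multiplication is a routine consequence of \Cref{rmk:Witt_vectors_as_norm_quotient}.
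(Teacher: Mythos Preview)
Your overall strategy---reducing to the Tambara identity $\res_{C_m}^{C_{p^km}}\circ n_{C_m}^{C_{p^km}} = \prod_{g\in C_{p^km}/C_m} g\cdot(-)$---matches the paper's. The gap is in the explicit computation you carry out afterward.

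The identification $\widetilde{\m{R}}(C_{p^kn}/C_m)\cong\m{R}(C_n/C_m)^{\otimes p^k}$ you attribute to \Cref{prop:norm_as_indexed_product} is not correct for $m>1$: that proposition gives $i^*_{C_n}\widetilde{\m{R}}\cong\m{R}^{\square p^k}$ as $C_n$-Mackey functors, but the box product does not reduce to the levelwise tensor product at nontrivial orbits $C_n/C_m$. So there is no tensor-factor description in which the Weyl action is simply ``cyclic permutation,'' and your computation of $\prod_g g\cdot\eta(r)$ as $r^{\otimes p^k}$ does not go through. Even at $m=1$, where the tensor identification does hold, the Weyl action of $C_{p^k}$ involves additional twists by the internal $C_n$-action (compare the analysis in the proof of \Cref{lem:idcounitunitmultliftcomp}), so the product is of the form $(g^{j_1}r)\otimes\cdots\otimes(g^{j_{p^k}}r)$ rather than $r^{\otimes p^k}$.

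The paper avoids both problems by not computing the Weyl product at all. Since $\m{\mathbb{W}}_{C_{p^kn}}(\m{R})$ is by construction the quotient of $\widetilde{\m{R}}$ by the Weyl action (\Cref{rmk:Witt_vectors_as_norm_quotient}), the map $q$ satisfies $q(g\cdot y)=q(y)$ for every Weyl element $g$, whence $q\bigl(\prod_g g\cdot\eta(x)\bigr)=q(\eta(x))^{p^k}$ with no further input. It then remains only to check that the subdivision isomorphism $i^*_{C_n}\m{\mathbb{W}}_{C_{p^kn}}(\m{R})\cong\m{\mathbb{W}}_{C_n}(\m{R})$ of \cite{BlGeHiLa} sends $q(\eta(x))$ to $q(x)$. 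This is cleaner than identifying $i^*_{C_n}q$ with iterated multiplication---which, incidentally, is true, but for subdivision reasons rather than the ``twisted-commutativity'' reason you give.
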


\begin{proof}
We begin by analyzing the restriction map 
\[\m{\res}\colon \m{\mathbb{W}}_{C_{p^kn}}(\m{R})(C_{p^kn}/C_{p^km})\to \CoInd_{C_n}^{C_{p^kn}}\m{\mathbb{W}}_{C_n}(\m{R})(C_{p^kn}/C_{p^km})\cong
\m{\mathbb{W}}_{C_n}(\m{R})(C_n/C_m).\]  
This map is the composite 
\[\m{\mathbb{W}}_{C_{p^kn}}(\m{R})(C_{p^kn}/C_{p^km})\xto{\res} \m{\mathbb{W}}_{C_{p^kn}}(\m{R})(C_{p^kn}/C_m) = i^*_{C_n}\m{\mathbb{W}}_{C_{p^kn}}(\m{R})(C_n/C_m)\xto{\cong} \m{\mathbb{W}}_{C_n}(\m{R})(C_n/C_m)\]
of the internal restriction map in the Mackey functor $\m{\mathbb{W}}_{C_{p^kn}}(\m{R})$ and the natural isomorphism of Witt vectors of \cite[Proposition 5.19]{BlGeHiLa}.  The diagram of interest thus factors as follows.  We have omitted the isomorphism $i^*_{C_n}N_{C_n}^{C_{p^kn}}\m{R}(C_n/C_m)\cong N_{C_n}^{C_{p^kn}}\m{R}(C_{p^kn}/C_m)$ in the target of the map $\eta$.
\[
 \begin{tikzcd}
&&N_{C_n}^{C_{p^kn}}\m{R}(C_{p^kn}/C_{p^km}) \ar[r,"q"]\ar[d,"\res"]& \m{\mathbb{W}}_{C_{p^kn}}(\m{R})(C_{p^kn}/C_{p^km})\ar[d,"\res"] \\
   & N_{C_n}^{C_{p^kn}}\m{R}(C_{p^kn}/C_m)\ar[ur,"n"]\ar[r]&N_{C_n}^{C_{p^kn}}\m{R}(C_{p^kn}/C_m)\ar[r,"q"] & \m{\mathbb{W}}_{C_{p^kn}}(\m{R})(C_{p^kn}/C_m)\ar[d,phantom,"{\rotatebox{270}{$\cong$}}"]\\  
 \m{R}(C_n/C_m) \ar[ru,"\eta"]\ar[rrr,"x\mapsto x^{p^k}"]&&&\m{\mathbb{W}}_{C_n}(\m{R})(C_n/C_m)
\end{tikzcd}
\]
Here the upper right square commutes because $q$ is a map of Green functors; we define the horizontal map in the upper left triangle to be the composite $\res\circ n$ so that the triangle also commutes.  In any Tambara functor, the composite of the internal norm $n_H^K$ and restriction $\res_K^H$ is the product over the action of the  Weyl group $W_H(K)$.  Thus our unlabeled map is the product over the action of the Weyl group $W_{C_{p^km}}(C_m)\cong C_{p^km}/C_m$.  This group has order $p^k$, and so it suffices to observe that the action of each $g\in C_{p^km}/C_m$ descends to the identity once we pass to the Witt vectors.

 Because $C_m$ is a subgroup of $C_n$, we can identify the the lower trapezoid in the previous diagram with the diagram
\begin{equation}\label{trapezoiddiagramformultlifts}
\begin{tikzcd}
& i_{C_n}^*N_{C_n}^{C_{p^kn}}\m{R}(C_n/C_m) \ar[r, "\prod_g g\cdot-"]& i_{C_n}^* N_{C_n}^{C_{p^kn}}\m{R}(C_n/C_m)\ar[r,"q"] & i_{C_n}^*\m{\mathbb{W}}_{C_{p^kn}}(\m{R})(C_n/C_m)\ar[d,phantom,"{\rotatebox{90}{$\cong$}}"] \\
\m{R}(C_n/C_m) \ar[ur,"\eta"]\ar[rr,"x\mapsto x^{p^k}"]&&\m{R}(C_n/C_m) \ar[r, "q"]& \m{\mathbb{W}}_{C_n}(\m{R})(C_n/C_m),
\end{tikzcd}
\end{equation}
where we have explicitly factored the map along the bottom into the power map and the quotient to the Witt vectors.
While the map labeled $\prod_g g\cdot -$ and the $p^k$th power map do not come from maps of $C_n$-Mackey functors, all the remaining maps in this diagram do, and each $g$ individually acts through a map of $C_n$-Mackey functors.

By definition of the Witt vectors for Green functors, the maps labelled $q$ identify elements in the same orbit under the Weyl action.  Hence it suffices to observe that $\eta(x)$ is sent to $x$ under the right vertical isomorphism. This follows from the description of this isomorphism in \cite{BlGeHiLa}. 
\end{proof}

We illustrate the action of the multiplicative lift via two examples. 
\begin{exmp}[Constant Tambara Functor]
Recall the multiplicative lift map $[-]_1$ for the constant $C_2$-Tambara functor $\m{\mathbb{F}}_3$ from \Cref{const_mult_lift}. Proposition \ref{prop:multLift} ensures a commutative diagram  
  \begin{center}
        \begin{tikzcd}
            & \m{\mathbb{W}}_{C_{6}}(\m{\mathbb{F}}_3)(C_{6}/C_{3}) \ar[d,"\m{\res}"] \\ \m{\mathbb{F}}_3(C_2/e) \ar[ur,"{[-]_1}"] \ar[r,"r\mapsto r^{3}"'] 
            & \m{\mathbb{W}}_{C_2}(\m{\mathbb{F}}_3)(C_2/e).
        \end{tikzcd}
    \end{center}
We calculated in \Cref{const_mult_lift} that $[-]_1$ is the map $\mathbb{F}_3\to \mathbb{Z}/9$ given by $0\mapsto 0$, $1\mapsto 1$ and $-1\mapsto -1$.  We now calculate the map $\m{\res}$ to illustrate the commutativity in this explicit example.

The map $\m{\res}\colon \m{\mathbb{W}}_{C_6}(\m{\mathbb{F}}_3) (C_6 / C_3) \to \m{\mathbb{W}}_{C_2} ( \m{\mathbb{F}}_3) (C_2 / e)$ is given by
    \[\m{\mathbb{W}}_{C_6}(\m{\mathbb{F}}_3) (C_6 / C_3) \xrightarrow{\res_e^{C_3}} \m{\mathbb{W}}_{C_6}(\m{\mathbb{F}}_3) (C_6 / e) \cong i^*_{C_2} \m{\mathbb{W}}_{C_6}(\m{\mathbb{F}}_3) (C_2 / e) \to \m{\mathbb{W}}_{C_2} (\m{\mathbb{F}}_3) (C_2 / e), \]
        which simplifies, via \Cref{trivial_weyl_Witt} and \Cref{const_mult_lift},
        to
    \begin{center}
    \begin{tikzcd}[row sep=tiny]
    N_{C_2}^{C_6}\m{\mathbb{F}}_3(C_6/C_3) \ar[d,phantom,"{\cong}" {rotate=-90}] \ar[r,"\res_e^{C_3}"] & N_{C_2}^{C_6}\m{\mathbb{F}}_3(C_6/e) \ar[d,phantom,"{\cong}" {rotate=-90}] \\
    \mathbb{Z} / 9 \ar[r] & \mathbb{Z}/3.
    \end{tikzcd}
    \end{center}
    This map relates only the  $C_3$ and $e$ levels of the Tambara functor, so we can take the restriction  $i_{C_3}^* N_{C_2}^{C_6} \m{\mathbb{F}}_3$ and determine $\res_e^{C_3}$ there. 
    By \Cref{prop:norm_as_indexed_product},
   \[i_{C_3} N_{C_2}^{C_6} \m{\mathbb{F}}_3 \cong N_e^{C_3} (i_{e} \m{\mathbb{F}}_3), \]
   	so $\res_e^{C_3}$ must be reduction modulo 3 by \cite[Definition 2.2.1 (3)]{MThesis}.
    
The diagram is thus
\begin{center}
\begin{tikzcd}
& \mathbb{Z}/9\ar[d,"\m{\res}\,=\,\text{mod} \,3"]\\
\mathbb{F}_3 \ar[r,"r\mapsto r^{3}"'] \ar[ur,"{[-]}_1"]
& \mathbb{F}_3
\end{tikzcd}
\end{center}
Since the map $r\mapsto r^3$ is the identity in $\mathbb{F}_3$, this diagram clearly commutes.
\end{exmp}
 
\begin{exmp}[Burnside Tambara Functor]
Let $ n = 1 $. We consider the diagram in \ref{prop:multLift} for the case where $\m{R}$ is the Burnside Tambara functor for the trivial group (i.e.~ $\mathbb{Z}$) and $p$ and $k$ are arbitrary.  It is also straightforward to show directly that the diagram in \ref{prop:multLift} commutes in this case.  

The identifications used in \Cref{burnside_mult_lift} also show that the map $\m{\res}$ is the map $\res\colon A(C_{p^k})\to A(e)$ between Burnside rings.  In \Cref{burnside_mult_lift} we calculated that the multiplicative lift $[-]_k$ is the norm map $A(e)\to A(C_{p^k})$.  Hence the composite
\begin{equation*}
    \m{\res}\circ[-]_k \colon A(e) \xto{n^{C_{p^k}}_e} A(C_{p^k}) \xto{\res^{C_{p^k}}_{e}} A(e).
\end{equation*}
is the product over the (trivial) action of the Weyl group $W_{C_{p^k}}(e) = C_{p^k}$.  Since the Weyl group has cardinality $p^k$, we see that this composite reproduces the $p^k$th-power map on $A(e)=\Z$.
\end{exmp}

We now show that our equivariant multiplicative lift map generalizes the classical multiplicative lift map of Definition \ref{defn:multlift} in the following sense.

\begin{prop}\label{prop:liftsagree}
Let $n=1$, so $R$ is a Tambara functor for the trivial group, or in other words, a commutative ring. Then the multiplicative lift of \Cref{defn:multiplicative_lift}
\[
[-]_k: R(e/e) = R \to \underline{\W}_{C_{p^k}} (R) (C_{p^k} / C_{p^k}) \cong W_{k+1}(R)
\]
is the usual multiplicative lift map for $p$-typical Witt vectors under the identification of Example \ref{ex:Wittvec_as_e_eqvt}.
\end{prop}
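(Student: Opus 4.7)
The plan is to specialize the definition of the equivariant multiplicative lift to the case $n = 1$ and identify each map in the resulting composite with its classical counterpart. When $n = 1$, the Tambara functor $\m{R}$ is a commutative ring $R$ (viewed as a trivial-group Tambara functor) and the only choice of $m$ dividing $n$ is $m = 1$, so $[-]_k$ reduces to the composite
\[
R \xrightarrow{\eta} R^{\otimes p^k} \xrightarrow{n_e^{C_{p^k}}} N_e^{C_{p^k}} R(C_{p^k}/C_{p^k}) \xrightarrow{q} \m{\W}_{C_{p^k}}(R)(C_{p^k}/C_{p^k}) \cong W_{k+1}(R).
\]
Here, $i_e^* N_e^{C_{p^k}} R(e/e) \cong N_e^{C_{p^k}} R(C_{p^k}/e) = R^{\otimes p^k}$ by \Cref{prop:norm_as_indexed_product}, and the final isomorphism is supplied by \Cref{ex:Wittvec_as_e_eqvt}.

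I would next dispatch the two easier pieces of this composite. The Weyl group $W_{C_{p^k}}(C_{p^k})$ is trivial, so the quotient map $q$ of \Cref{rmk:Witt_vectors_as_norm_quotient} is the identity at this top level. The unit $\eta$ of the Tambara norm-restriction adjunction (\Cref{prop:Tambara_norm_restriction_adjunction}), evaluated at the level $e/e$, is the ring homomorphism $r \mapsto r \otimes 1 \otimes \cdots \otimes 1$; this follows from the universal property, since $\eta$ must correspond under the adjunction to the identity map of $N_e^{C_{p^k}} R$.

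The core task is then to identify $n_e^{C_{p^k}}(r \otimes 1 \otimes \cdots \otimes 1) \in N_e^{C_{p^k}} R(C_{p^k}/C_{p^k})$ with the classical Teichm\"uller representative $(r, 0, \ldots, 0) \in W_{k+1}(R)$. My approach is a direct computation using Mazur's inductive description \cite{MThesis} of the Tambara functor $N_e^{C_{p^k}} R$ for cyclic $p$-groups, which provides explicit formulas for internal norms at each level. Tracking this description through the isomorphism $N_e^{C_{p^k}} R(C_{p^k}/C_{p^k}) \cong W_{k+1}(R)$ established in \cite[Proposition 6.1]{BlGeHiLa}, one recognizes the image as the Witt vector $(r, 0, \ldots, 0)$.

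The main obstacle will be bookkeeping the isomorphism $N_e^{C_{p^k}} R(C_{p^k}/C_{p^k}) \cong W_{k+1}(R)$ through Mazur's recursive construction, since this requires carefully following how the Witt-coordinate ring structure arises from tensor powers of $R$ modulo Weyl actions at intermediate subgroups. As a partial consistency check, \Cref{prop:multLift} already forces $\m{\res}(n_e^{C_{p^k}}(\eta(r))) = r^{p^k}$, which under \Cref{ex:Wittvec_as_e_eqvt} matches the top ghost component $w_k([r]_k^{\mathrm{cl}}) = r^{p^k}$ of the classical Teichm\"uller lift.
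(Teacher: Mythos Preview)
Your approach is valid but genuinely different from the paper's.  You propose to compute the composite $q \circ n_e^{C_{p^k}} \circ \eta$ directly, using Mazur's explicit inductive description of $N_e^{C_{p^k}}R$ to identify the image with the Witt vector $(r,0,\ldots,0)$ under the isomorphism of \cite[Proposition~6.1]{BlGeHiLa}.  The paper instead argues by \emph{characterization}: it recalls that the classical Teichm\"uller lift is the unique multiplicative map satisfying $F^k([a]_k)=a^{p^k}$ and $R^k([a]_k)=a$, and then checks that the equivariant lift enjoys both properties.  The first is obtained from \Cref{prop:multLift} together with the Hesselholt--Madsen identification of the Mackey restriction with the Witt Frobenius; the second is checked by computing $r([a]_1)$ using the compatibility of geometric fixed points with norms (Proposition~\ref{propitem:norm_geom_fixpt}) and then identifying the algebraic restriction $r$ with the classical Witt restriction $R$.

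Your direct approach has the virtue of being self-contained once Mazur's formulas are in hand, and it avoids appealing to the Hesselholt--Madsen comparison of $F$ and $R$ with their Witt-vector namesakes.  The paper's approach avoids the recursive bookkeeping you flag as the main obstacle, but pays for this by importing more external results.  One small point: your claim that $q$ is the identity at the top level is correct, but the cleanest justification is not that the Weyl group $W_{C_{p^k}}(C_{p^k})$ is trivial (the quotient is a Mackey-functor coequalizer, not a levelwise one); rather, one already has $N_e^{C_{p^k}}R(C_{p^k}/C_{p^k})\cong W_{k+1}(R)$ as part of the identification in \cite{BlGeHiLa}, so $q$ at that level is an isomorphism by construction.
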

\begin{rmk}\label{liftnames2}The careful reader may note a grading convention shift:  our multiplicative lift $[-]_k$ corresponds to the classical multiplicative lift that would most often be denoted in the literature by $[-]_{k+1}$. This is why we chose a different naming convention for the classical multiplicative lifts, as discussed in Remark \ref{liftnames}. The notation in this paper is chosen so that the index aligns well with the group $C_{p^k}$. 
\end{rmk}
\begin{proof}
The classical multiplicative lift map $[-]_{k}$ (which in the classical context is typically denoted $[-]_{k+1}$) is characterized by the conditions that it lifts the $p^k$-power map, in the sense that $F^{k}([a]_k) = a^{p^{k}},$ and that $R^k[a]_k = a$. We verify these conditions here to show that we recover the classical multiplicative lift map. 

By Proposition \ref{prop:multLift}, for $R$ a Tambara functor for the trivial group, the following diagram commutes, where $[-]_k$ denotes the equivariant multiplicative lift:
\[
\begin{tikzcd}
        & \m{\mathbb{W}}_{C_{p^k}}(R)(C_{p^k}/C_{p^k}) \ar[d,"\m{\res}"] \\
        R \ar[r,"r\mapsto r^{p^k}"] \ar[ur,"{[-]_k}"] & \m{\mathbb{W}}_{e}(R)(e/e).
    \end{tikzcd}
\]

We expand this diagram to
\[
\begin{tikzcd}[column sep=small]
       & & \m{\mathbb{W}}_{C_{p^k}}(R)(C_{p^k}/C_{p^k}) \ar[d,"\res_e^{C_{p^k}}"] \ar[r,"\cong"]& \m{\pi}_0^{C_{p^k}}\THH(R)(C_{p^k}/C_{p^k}) \ar[r,"\cong"] \ar[d,"\res_e^{C_{p^k}}"] &  \pi_0\THH(R)^{C_{p^k}}  \ar[d,"F^k"] & W_{k+1}(R) \ar[d,"F^k"] \ar[l,swap,"\cong"] \\
        & & \m{\mathbb{W}}_{C_{p^k}}(R)(C_{p^k}/e) \ar[r, "\cong"] \ar[d,"\cong"]& \m{\pi}_0^{C_{p^k}}\THH(R)(C_{p^k}/e)\ar[r,"\cong"] \ar[d,"\cong"]& \pi_0\THH(R)  \ar[d,"\id"]& R \ar[d,"\id"] \ar[l,swap,"\cong"] \\
        R \ar[rr,"r\mapsto r^{p^k}"] \ar[uurr,"{[-]_k}", bend left=15] & & \m{\mathbb{W}}_{e}(R)(e/e) \ar[r, "\cong"]& \m{\pi}_0^{e}\THH(R)(e/e)\ar[r,"\cong"] & \pi_0\THH(R)& R \ar[l,swap,"\cong"]. 
   \end{tikzcd}
\]
The upper right-hand square commutes by work of Hesselholt and Madsen \cite[Theorem 3.3]{HeMa97}. 
The upper middle square commutes by the definition of the map $F$. 
The upper left square commutes because the horizontal maps come from maps of Mackey functors \cite{BlGeHiLa}. The lower squares commute by inspection. 
Thus, when $C_n=e$, the equivariant multiplicative lift $[-]_k$ lifts the $p^k$-power map in the classical sense. 

We now consider the maps
\begin{equation} \label{diag:multrecovers}
  R \xrightarrow{[-]_k}\m{\mathbb{W}}_{C_{p^k}}(R)(C_{p^k}/C_{p^k}) \xrightarrow{r^k} \m{\mathbb{W}}_e(R)(e/e)= R  
\end{equation}
where $r^k$ here is $k$th iterate of the ring map $r$ of  \ref{rmk:restrictionmapsWitt}.  Since $n=1$, it follows that $\nu=1$ and the map $r$ has the form $\m{\mathbb{W}}_{C_{p^k}}(R)(C_{p^k}/C_{p^k})\to \m{\mathbb{W}}_{C_{p^{k-1}}}(R)(C_{p^{k-1}}/C_{p^{k-1}})$.
We show that the composite of (\ref{diag:multrecovers}) is equal to the identity on $ R $.  We first consider the case where $k = 1.$ Let $a \in R.$ Then $a$ is represented by a map $\m{A}_e = \mathbb{Z} \xrightarrow{a} R,$ and using the externalized norms perspective of Remark \ref{rmk:externalizingnorms} 
and the Yoneda lemma, $[a]_1 \in \m{\mathbb{W}}_{C_p} (R) (C_p / C_p)$ is
obtained by evaluating the composite
\[
\m{A}_{C_p} = N_e^{C_p} \m{A}_e \xrightarrow{Na} N_e^{C_p}  R \xrightarrow{N \eta} N_e^{C_p} i_e N_e^{C_p} R \xrightarrow{\varepsilon} N_e^{C_p} R \xrightarrow{q} \m{\mathbb{W}}_{C_p}(R)
\]
at $ [C_p/C_p] \in \m{A}_{C_p}(C_p/C_p) $ (see Example \ref{ex:Burnside}). 
Here $\eta$ and $\varepsilon$ are the unit and counit of the $(N_e^{C_p}, i_e)$-adjunction between $e$ and $C_p$-Tambara functors, so $ N_e^{C_p} i_e R \xrightarrow{N \eta} N_e^{C_p} i_e N_e^{C_p} R \xrightarrow{\varepsilon} N_e^{C_p} R$ is the identity, where $N$ is the norm $N_e^{C_p}$. Hence the composite representing $[a]_1 \in \m{\mathbb{W}}_{C_p} (R) (C_p / C_p)$ can be shortened to 
\[
\m{A}_{C_p} = N_e^{C_p} \m{A}_e \xrightarrow{Na} N_e^{C_p} R \xrightarrow{q} \m{\mathbb{W}}_{C_p}(R).
\]
By the definition of $r$ in \Cref{rmk:restrictionmapsWitt}, the element $ r([a]_1) \in \m{\mathbb{W}}_{e}(R)(e/e)\cong\Phi^{C_p} \m{\mathbb{W}}_{C_p} (R) (e/e)$ is the image of $ [C_p/C_p] \in \big(\zeta_{C_p} \m{A}_{C_p}\big)(e/e) \cong \m{A}_{C_p}(C_p/C_p) $ under the composite from the upper left to the lower center
\[\begin{tikzcd}
	& \zeta_{C_p} \m{A}_{C_p} \ar[r, "r"] \ar[d, "\zeta_{C_p} Na", swap] & \Phi^{C_p} \m{A}_{C_p} \ar[d, "\Phi^{C_p} Na", swap] \ar[r,equals] & \m{A}_e \ar[d,"a"] \\
	& \zeta_{C_p} N_e^{C_p}R \ar[r, "r"] & \Phi^{C_p} N_e^{C_p} R \ar[r,equals] & R \,,
 \end{tikzcd} 
\]
where the diagram is one of $ C_p/C_p $-Tambara functors, or ordinary commutative rings. 
Unpacking the definition of geometric fixed points, we see that the map $r$ on Burnside Mackey functors quotients out the image of the transfer.  Thus in this case, $ r \colon \m{A}_{C_p}(C_p/C_p) \cong \Z\{C_p/e\} \oplus \Z\{C_p/C_p\} \to \m{A}_e(e) \cong \Z $ satisfies $ r\left([C_p/C_p]\right) = 1 $. 
The commutativity of the right hand square follows from taking the normal subgroup $ N = G $ in Proposition \ref{propitem:norm_geom_fixpt}. 
Thus $ r([a]_1) = a $.   The case for general $k$ follows in a similar fashion.

The maps $r$ on equivariant Witt vectors of Remark \ref{rmk:restrictionmapsWitt} are compatible with the restriction maps $R$ on classical Witt vectors of Definition \ref{restrictionWitt} in the sense that the following diagram commutes. 
\[
\begin{tikzcd}[column sep=small]
      \m{\mathbb{W}}_{C_{p^k}}(R)(C_{p^k}/C_{p^k}) \ar[d,"r"] \ar[r,"\cong"]& \m{\pi}_0^{C_{p^k}}\THH(R)(C_{p^k}/C_{p^k}) \ar[r,"\cong"] \ar[d,"R"] &  \pi_0\THH(R)^{C_{p^k}}  \ar[d,"R"] & W_{k+1}(R) \ar[d,"R"] \ar[l,swap,"\cong"] \\
         \m{\mathbb{W}}_{C_{p^{k-1}}}(R)(C_{p^{k-1}}/C_{p^{k-1}}) \ar[r, "\cong"]& \m{\pi}_0^{C_{p^{k-1}}}\THH(R)(C_{p^{k-1}}/C_{p^{k-1}})\ar[r,"\cong"] & \pi_0\THH(R)^{C_{p^{k-1}}} & W_k(R) \ar[l,swap,"\cong"]. 
   \end{tikzcd}
\] 
The commutativity of this diagram follows from \cite[Theorem 3.3]{HeMa97} and \cite[Section 5]{BlGeHiLa}. Hence, the proof that $r^k[-]_k = \id$ above implies that the multiplicative lift $[-]_k$ followed by $R^k$ is the identity. Hence, the multiplicative lift  
\[
[-]_k: R(e/e) = R \to \underline{\W}_{C_{p^k}} (R) (C_{p^k} / C_{p^k}) \cong W_{k+1}(R)
\]
is the usual multiplicative lift map for $p$-typical Witt vectors. 
\end{proof}

\section{Equivariant Witt complexes}\label{sec:EquivariantWittCplx}
In this section we define the notion of a $C_n$-equivariant Witt complex and show that when $n=1$ this recovers the classical Witt complex of \Cref{defn:Wittcplx_classical}.
\begin{defn}\label{defn:equivariant_Witt_complex}
Let $p$ be an odd prime,  and $C_n$ a cyclic group of order $n$, where $n$ is relatively prime to $p$. Let $\nu$ be the order of the $p$th power map on $C_n$.
Let $\m{R}$ be a $C_n$-Tambara functor such that $\m{R}(C_n/C_m)$ is a $\mathbb{Z}_{(p)}$-algebra for each $C_m\subset C_n$. 
A \emph{$C_n$-equivariant Witt complex} over $\m{R}$ is the data of:
\begin{enumerate}
    \item a collection $ \big\{\m{E}_*^{C_{p^{s}n}}\big\} $  of  graded $C_{p^sn}$-Green functors, where $s \geq0$, subject to the compatibility condition
    \begin{equation*}\label{eq:eqvt_Witt_cplx_compatibilites}
        i^*_{C_{p^kn}} \m{E}_*^{C_{p^sn}} \cong \m{E}_*^{C_{p^kn}} \text{ for } k<s, 
    \end{equation*} 
\item \label{defn_eqvt_Witt_cplx:differential} for each subgroup $C_q$ of $C_{p^sn}$, a differential $d$  on $\m{E}_*^{C_{p^sn}}(C_{p^sn}/C_q)$ making each $\m{E}_*^{C_{p^sn}}(C_{p^sn}/C_q)$ into a differential graded ring, preserved under the compatibility maps in (1),
\item \label{defn_eqvt_Witt_cplx:r}for each $s \geq \nu$, a map of $ C_{p^{s-\nu}n}$-Green functors
\[ r\colon \zeta_{C_{p^\nu}} \m{E}_*^{C_{p^sn}} \to \m{E}_*^{C_{p^{s-\nu}n}}, \]
    \item   for each $s \geq 0$, a map of $C_{p^sn}$-Green functors
    \begin{equation*}\label{eq:eqvt_Witt_cplx_unit}
        \lambda\colon \m{\W}_{C_{p^sn}}(\m{R}) \to \m{E}_0^{C_{p^sn}}.     
    \end{equation*}
  
\end{enumerate}
This data is required to satisfy the following conditions. Let $H$ and $L$ be subgroups of $C_{p^sn}$ with $H\subset L$ and with index $[L:H]$. Also let $F$ denote any restriction map of the form $\res^{C_{{p^s}m}}_{C_{p^{s-1}m}}$ in the Green functor $\m{E}_*^{C_{p^sn}}$.  
These relations must hold for all choices of $m$ dividing $n$.
\begin{enumerate}[label=(\roman*)]     \item \label{enumitem:eqvt_Witt_cplx_basic_relations} $ \lambda r = r \lambda,$ $dr=rd,$  
    \item \label{enumitem:eqvt_Witt_cplx_FdV_FV} $\res_H^L\,d\,\tr_H^L=d $ and $\res_H^L\,\tr_H^L=[L:H]$,
    \item \label{enumitem:eqvt_Witt_cplx_div_power_rule} $ i^*_{C_{p^{k-1}n}}Fd\lambda([x]_k) = \lambda([x]_{k-1})^{p-1}d\lambda([x]_{k-1})$ for $x\in \m{R}(C_n/C_m)$.
\end{enumerate}
Here $[-]_k$ denotes the multiplicative lift of Definition \ref{defn:multiplicative_lift}.
\end{defn}

\begin{rmk}\label{rmapsgiveringmaps}
Note that the maps of $ C_{p^{s-\nu}n} $ Green functors \[ r\colon \zeta_{C_{p^\nu}} \m{E}_*^{C_{p^sn}} \to \m{E}_*^{C_{p^{s-\nu}n}} \] in the definition of an equivariant Witt complex yield maps of differential graded rings
\[
r \colon \m{E}_*^{C_{p^sn}}(C_{p^sn}/C_{p^s}) \to \m{E}_*^{C_{p^{s-\nu}n}}\left(C_{p^{s-\nu}n}/C_{p^{s-\nu}}\right)\ \]
after evaluation at the orbit $(C_{p^{s-\nu}n}/C_{p^{s-\nu}})$. These maps of differential graded rings are the equivariant analogue of the structure maps in the pro-differential graded ring $E_{\bullet}^*$ of a classical Witt complex (see Definition \ref{defn:Wittcplx_classical}). 
\end{rmk}

\begin{rmk}
In the case where $H$ is an index $p$ subgroup of $L$, if we denote the transfer $\tr_H^L$ by $V$ and continue to denote the restriction $\res_H^L$ by $F$, condition \ref{enumitem:eqvt_Witt_cplx_FdV_FV} of \Cref{defn:equivariant_Witt_complex} takes the form
\[ FdV=d \text{\quad and \quad} FV=p,\]
mirroring the conditions in the classical definition of a Witt complex.
\end{rmk}
\begin{rmk}
	 It is worth observing that the conditions $FdV=d$ and $FV=p,$ together with Frobenius reciprocity, imply that $dF=pFd$ and thus the differentials cannot make $\m{E}_*^{C_{p^sn}}$ into a differential graded Green functor.
\end{rmk}

When $n=1$, we now show that Definition \ref{defn:equivariant_Witt_complex} recovers the classical notion of a Witt complex, as defined in Definition \ref{defn:Wittcplx_classical}. 
An $e$-Tambara functor is a commutative ring $R$. Suppose that $ R $ is a $\mathbb{Z}_{(p)}$-algebra. An $e$-equivariant Witt complex over $ R $ is then a collection   $ \big\{\m{E}_*^{C_{p^s}}\big\} $ of graded $C_{p^s}$-Green functors, $s\geq0$,  with differential structures on $\m{E}^{C_{p^s}}_*(C_{p^s}/C_{p^q})$,  subject to the compatibility conditions and relations of Definition \ref{defn:equivariant_Witt_complex}.

\begin{prop}
Let $e$ denote the trivial group, and let $p$ be an odd prime. Let $R$ be a commutative $\mathbb{Z}_{(p)}$-algebra and $ \big\{\m{E}_*^{C_{p^s}}\big\} $ an $e$-equivariant Witt complex over $R$. 
Endow the differential graded ring $ B^\ast_{s +1} := \m{E}_\ast^{C_{p^s}}(C_{p^s}/C_{p^s}) $ with the structure of a pro-system $ B_{\bullet}^* \to B_{\bullet -1}^*$ with structure maps the maps $r$ of Remark \ref{rmapsgiveringmaps}.
Then the pro-differential graded ring $\left\{B_{\bullet}^*\right\}$ forms a classical Witt complex over $R$.
\end{prop}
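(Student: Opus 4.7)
The plan is to identify each piece of structure in \Cref{defn:Wittcplx_classical} with the corresponding clause of \Cref{defn:equivariant_Witt_complex} specialized to $n=1$ (so $\nu=1$), then translate the axioms by evaluating at the ``top'' orbit $(C_{p^s}/C_{p^s})$. First I would verify that each $B^\ast_{s+1} = \m{E}^{C_{p^s}}_\ast(C_{p^s}/C_{p^s})$ is a differential graded ring: the graded ring structure is inherited from the graded Green functor $\m{E}^{C_{p^s}}_\ast$, and the differential is supplied by condition (2) of \Cref{defn:equivariant_Witt_complex}. The pro-structure maps $r\colon B^\ast_{s+1}\to B^\ast_s$ are defined by evaluating the Green functor map of (3) at the top orbit and using the identification $\zeta_{C_p}\m{E}^{C_{p^s}}_\ast(C_{p^{s-1}}/C_{p^{s-1}})\cong \m{E}^{C_{p^s}}_\ast(C_{p^s}/C_{p^s})$ from \Cref{ntn:fixed_point_as_Mackey_for_quotient}. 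These are DG ring maps because they are maps of Green functors (giving the ring compatibility) and $dr=rd$ by condition (i).

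Next I would set up the remaining structure maps. For $\lambda$, combine condition (4) evaluated at the top orbit with the identification $\m{\W}_{C_{p^s}}(R)(C_{p^s}/C_{p^s})\cong W_{s+1}(R)$ of \Cref{ex:Wittvec_as_e_eqvt}, obtaining a ring map $\lambda\colon W_{s+1}(R)\to B^0_{s+1}$; strictness with the pro-structure follows from $\lambda r = r\lambda$ in condition (i), combined with the fact that the equivariant restriction $r$ recovers the classical restriction on Witt vectors, a point established in the proof of \Cref{prop:liftsagree}. The Frobenius is defined by composing the internal restriction $\res_{C_{p^{s-1}}}^{C_{p^s}}$ of the Green functor $\m{E}^{C_{p^s}}_\ast$ with the compatibility isomorphism $i_{C_{p^{s-1}}}^\ast\m{E}^{C_{p^s}}_\ast \cong \m{E}^{C_{p^{s-1}}}_\ast$ from (1), yielding a graded ring map $F\colon B^\ast_{s+1}\to B^\ast_s$; similarly the internal transfer $\tr_{C_{p^{s-1}}}^{C_{p^s}}$ yields the additive Verschiebung $V\colon B^\ast_s\to B^\ast_{s+1}$, which is a map of $B^\ast_{s+1}$-modules (via $F$) by Frobenius reciprocity in the Green functor.

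Finally I would check each relation of \Cref{defn:Wittcplx_classical}. The equalities $\lambda F = F\lambda$ and $\lambda V = V\lambda$ are automatic because $\lambda$ is a map of Green functors and hence intertwines all restrictions and transfers. The relations $FV = p$ and $FdV = d$ are immediate by taking $H = C_{p^{s-1}}\subset L = C_{p^s}$ in condition (ii), so that $[L:H] = p$. The differential identity $Fd\lambda([a]_k) = \lambda([a]_{k-1})^{p-1}d\lambda([a]_{k-1})$ is condition (iii) after evaluation at the top orbit of $\m{E}^{C_{p^{k-1}}}_\ast$, where the restriction functor $i^\ast_{C_{p^{k-1}}}$ drops away.

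The main obstacle is purely bookkeeping: ensuring that under evaluation at the top orbit our two multiplicative lifts, the classical $[-]_k$ of \Cref{defn:multlift} and the equivariant $[-]_k$ of \Cref{defn:multiplicative_lift}, really do agree, so that condition (iii) of \Cref{defn:equivariant_Witt_complex} translates exactly to the classical Frobenius--divided-power formula. This agreement is precisely the content of \Cref{prop:liftsagree}, so once it is invoked, every classical axiom reduces to the corresponding clause of the equivariant definition and the proof is complete.
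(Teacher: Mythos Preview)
Your proposal is correct and follows essentially the same approach as the paper's proof: both specialize the equivariant definition to $n=1$ (so $\nu=1$), evaluate at the top orbit $C_{p^s}/C_{p^s}$, identify $F$ and $V$ with the internal restriction and transfer of the Green functor, and invoke \Cref{prop:liftsagree} to translate condition~(iii) into the classical divided-power relation. Your write-up is in fact more explicit than the paper's in spelling out why $\lambda F=F\lambda$ and $\lambda V=V\lambda$ hold (namely, because $\lambda$ is a Green functor map), and in noting that strictness of $\lambda$ requires knowing that the equivariant $r$ on Witt vectors matches the classical restriction $R$.
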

\begin{proof}
When we specialize to the case $C_n=e$, note that all subgroups have $p$-power index, so it suffices to consider restriction or transfer between subgroups of index $p$, which we again denote by $F$ or $V$.  Furthermore, the order of the $p$th power map on $e$ is $1$, so $\nu=1$ in part (\ref{defn_eqvt_Witt_cplx:r}) of \Cref{defn:equivariant_Witt_complex}.

By definition of an $e$-equivariant Witt complex, we have maps of Green functors 
\[
    \lambda\colon \m{\W}_{C_{p^s}}(R) \to \m{E}_0^{C_{p^s}}  \,.   
\]
We evaluate at the orbit $C_{p^s}/C_{p^s}$ to get a map
\[
    \lambda\colon \m{\W}_{C_{p^s}}(R)(C_{p^s}/C_{p^s})\cong W_{s+1}(R) \to B^0_{s+1} = \m{E}_0^{C_{p^s}}(C_{p^s}/C_{p^s}),     
\]
as desired. This is a strict map of pro-rings, since $\lambda r = r \lambda$ in the equivariant Witt complex. 

    The maps $F$ and $V$ in the definition of an equivariant Witt complex specialize to maps 
    \begin{equation*}
    \begin{split}
      F \colon B_s^\ast \to B_{s-1}^\ast \\
      V \colon B_{s}^{\ast} \to B_{s+1}^\ast
    \end{split}
    \end{equation*}
    and the relations in the equivariant Witt complex specialize to the relations in the non-equivariant case. For relation (iii), recall from Proposition \ref{prop:liftsagree} that the equivariant multiplicative lift maps $[-]_k$ specialize to the classical multiplicative lifts on $p$-typical Witt vectors when $n=1$. Thus $ B^\ast_\bullet $ forms a classical Witt complex over $ R $. \end{proof}

\section{Twisted topological Hochschild homology and equivariant Witt complexes}\label{sec:TwistedTHHEquivariantWitt}
In this section we prove that the equivariant homotopy of twisted topological Hochschild homology forms an equivariant Witt complex. This is an equivariant analogue of the theorem of Hesselholt and Madsen recalled as Theorem \ref{classicalWittcxtheorem} above. 

We begin by defining the necessary maps of $C_{p^{s-\nu}n}$-Green functors $r$ from \Cref{defn:equivariant_Witt_complex}.

\begin{defn}\label{defn:twisted_THH_restriction}
    Let $ \m{R} $ be a $ C_n $-Tambara functor. 
    Let $ p $ be a prime not dividing $ n $ and let $ \nu $ be the order of the $ p $th power map on $ C_n $. 
    For each $ s \geq \nu $, define a map $r$ between the homotopy Green functors of the twisted topological Hochschild homology of $ \m{R} $ as a composite: 
\[r\colon \zeta_{C_{p^\nu}}\m{\pi}_*^{C_{p^{s}n}}(\THH_{C_n}\m{R}) \cong \m{\pi}_*^{C_{p^{s-\nu}n}}\big(\rho_{p^\nu}^*(\THH_{C_n}\m{R})^{C_{p^\nu}}\big) \to \m{\pi}_*^{C_{p^{s-\nu}n}}\big(\rho_{p^\nu}^*\Phi^{C_{p^\nu}}\THH_{C_n}\m{R}\big)\,.\] 
The left-hand equivalence exists by definition, and the latter map is obtained by taking the $ C_{p^{s-\nu}n}$-homotopy Green functor of the canonical morphism of $S^1$-ring spectra
\begin{equation}\label{canonicalmorph} 
\rho^*_{p^\nu}(\THH_{C_n}\m{R})^{C_{p^\nu}} \to \rho^*_{p^\nu}\Phi^{C_{p^\nu}} \THH_{C_n} \m{R} 
\end{equation}
 between categorical and geometric fixed points. 
\end{defn}

\begin{rmk}\label{designatedringmapsTHH} In the case that $n$ and $p$ are relatively prime, as discussed in \Cref{twistedpcyclotomicTHH}, $\THH_{C_n}\m{R}$ has a twisted $p$-cyclotomic structure.  This means there is an $S^1$-equivalence
    \[ \rho_{p^\nu}^* \Phi^{C_{p^\nu}} \THH_{C_n}\m{R} \simeq \THH_{C_n}\m{R}.\]
 In particular, the map $r$ above induces a map of $C_{p^{s-\nu}n}$-Green functors
    \[ r\colon  \zeta_{C_{p^\nu}}\m{\pi}_*^{C_{p^{s}n}}(\THH_{C_n}\m{R}) \to \m{\pi}_*^{C_{p^{s-\nu}n}}\big(\THH_{C_n}\m{R}\big).
\] 
 \end{rmk}

These maps $r$ are the designated maps $r$ we need in order to show that the equivariant homotopy of twisted topological Hochschild homology is an equivariant Witt complex.

\begin{thm}\label{thm:twisted_THH_is_eqvt_Witt}
    Let $ n $ be a positive integer and $ p $ an odd prime not dividing $ n $. 
  	For $\m{R}$ a $C_n$-Tambara functor such that $\m{R}(C_n/C_m)$ is a $\mathbb{Z}_{(p)}$-algebra for each $C_m\subset C_n$, the graded Green functors $ \{\underline{\pi}_*^{C_{p^sn}} \THH_{C_n} \m{R}\}_{s \geq 0} $ form a $C_n$-equivariant Witt complex over $\m{R}$.
   \end{thm}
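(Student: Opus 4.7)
My plan is to match each piece of data in Definition \ref{defn:equivariant_Witt_complex} with explicit structure on the tower $\{\m{\pi}_*^{C_{p^sn}}\THH_{C_n}\m{R}\}_{s\geq 0}$ and then verify conditions (i)--(iii). For the structural data, each $\m{\pi}_*^{C_{p^sn}}\THH_{C_n}\m{R}$ is a graded $C_{p^sn}$-Green functor because $\THH_{C_n}\m{R}$ is a commutative $S^1$-equivariant ring spectrum, and the compatibility $i^*_{C_{p^kn}}\m{\pi}_*^{C_{p^sn}}\THH_{C_n}\m{R} \cong \m{\pi}_*^{C_{p^kn}}\THH_{C_n}\m{R}$ for $k<s$ is the standard restriction of equivariant homotopy. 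I take $\lambda$ to be the $k=0$ specialization of the natural isomorphism from \cite[Theorem 5.1]{BlGeHiLa}, and $r$ to be the map of Definition \ref{defn:twisted_THH_restriction}. For the differential at the orbit $(C_{p^sn}/C_q)$, the spectrum $\THH_{C_n}\m{R}^{C_q}$ carries the residual $S^1/C_q$-action, and I define $d$ following Hesselholt--Madsen as exterior multiplication by $\sigma \in \pi_1^s(S^1_+)$ followed by this action map; the hypothesis that $p$ is odd together with $\mathbb{Z}_{(p)}$-linearity yields $d^2=0$ and the Leibniz rule, as in \cite[Lemma~1.4.2]{Hesselholt1996}.

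The identity $\lambda r = r \lambda$ should follow from naturality of the \cite[Theorem 5.1]{BlGeHiLa} isomorphism combined with the compatibility between the algebraic $p$-cyclotomic structure on $\m{\mathbb{W}}$ (from \cite[Proposition 5.17]{BlGeHiLa}) and the topological twisted $p$-cyclotomic structure of Theorem \ref{twistedpcyclotomicTHH}. The identity $dr = rd$ follows because the cyclotomic structure map \eqref{canonicalmorph} is $S^1$-equivariant while $d$ is defined purely from the $S^1$-action. For the Mackey-level relations in (ii), the double coset formula reduces $\res_H^L\tr_H^L$ to the Weyl-orbit sum $\sum_{\gamma\in L/H}\gamma\cdot(-)$ (since $H$ is normal in the abelian group $L$), and triviality of this Weyl action on the relevant levels of twisted $\THH$---a consequence of the commutative-ring-spectrum structure on $\THH_{C_n}\m{R}$ and the explicit twisted cyclic bar model---yields $\res_H^L\tr_H^L=[L:H]$. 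The relation $\res_H^L\,d\,\tr_H^L = d$ then follows from Frobenius reciprocity together with the fact that $d$ is natural in the underlying $S^1$-action and hence compatible with the Mackey transfer.

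The main obstacle will be condition (iii), the divided-power identity for the multiplicative lift. My strategy is first to produce a topological lift of $[x]_k$: by Definition \ref{defn:multiplicative_lift}, $[x]_k$ factors through the internal norm $n_{C_m}^{C_{p^km}}$ in the Tambara functor $N^{C_{p^kn}}_{C_n}\m{R}$ and the quotient $q$ to $\m{\mathbb{W}}_{C_{p^kn}}(\m{R})$. Via the unit map $\eta_{S^1}$ of Remark \ref{rmk:norm_to_twisted_HH_quotient}, this factorization realizes $\lambda([x]_k)$ in $\m{\pi}_0^{C_{p^kn}}\THH_{C_n}\m{R}(C_{p^kn}/C_{p^km})$ as the image under the topological equivariant norm of a multiplicative representative of $x$. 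Applying $d$ to such a $p$-fold norm and using the Leibniz rule produces a factor of the form $p\cdot \lambda([x]_k)^{p-1}d\lambda([x]_k)$; restricting through $F$ from $C_{p^km}$ down to $C_{p^{k-1}m}$ and using compatibility of $\lambda$ with restriction then collapses the sum to the desired $\lambda([x]_{k-1})^{p-1}d\lambda([x]_{k-1})$, exactly as in the classical Hesselholt--Madsen computation. The hardest part is carrying out this Leibniz computation explicitly inside the twisted cyclic bar construction of Definition \ref{defn_twisted_cyclic_bar} and identifying the multiplicative norms appearing topologically with the algebraic internal norms in the definition of $[-]_k$; this is where the twisting by the generator $g\in C_n$ in the last face map of the twisted cyclic bar construction---absent in the classical setting---requires the most care.
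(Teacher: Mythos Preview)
Your plan for the structural data and for conditions (i) and most of (ii) matches the paper's proof closely. Two points need correction.

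First, your justification for $\res_H^L \tr_H^L = [L:H]$ is not quite right. You attribute the triviality of the Weyl action to the commutative-ring-spectrum structure and the twisted cyclic bar model, but commutativity is not the operative reason. The paper's argument is that the $L$-action on $\THH_{C_n}\m{R}$ is the restriction of the ambient $S^1$-action; since $S^1$ is path-connected, every element of $L\subset S^1$ acts by a map homotopic to the identity, hence trivially on homotopy groups. This is a homotopical fact about connected group actions, not an algebraic consequence of commutativity.

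Second, and more seriously, your strategy for condition (iii) does not work as written. You propose to view $\lambda([x]_k)$ as a ``$p$-fold norm,'' apply the Leibniz rule to get $p\cdot \lambda([x]_k)^{p-1}d\lambda([x]_k)$, and then restrict via $F$. But $\lambda([x]_k)$ lives at the orbit $C_{p^kn}/C_{p^km}$ and is \emph{not} a $p$-th power there---it is the image of a norm, and only its restriction $F\lambda([x]_k)$ is a product over the Weyl group. Since $Fd \neq dF$ (indeed $dF = pFd$, and $p$ is not invertible in a $\Z_{(p)}$-algebra), you cannot compute $Fd\lambda([x]_k)$ from $dF\lambda([x]_k)$ algebraically. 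The classical Hesselholt--Madsen argument you cite (Lemma~1.5.6 of \cite{Hesselholt1996}) is \emph{not} a Leibniz-rule computation either: it is an explicit comparison of simplicial maps. The paper follows the same route in the equivariant setting (Proposition \ref{prop:twisted_THH_power_rule}): one realizes $\lambda([x]_k)$ as a concrete $C_{p^k m}$-equivariant map into the (subdivided) twisted cyclic bar construction via the externalized norm description (Lemmas \ref{lem:spectrumlevelrepnmultlift} and \ref{lem:idcounitunitmultliftcomp}), unwinds the simplicial $S^1$-action $\alpha_\bullet$ to compute $d$, applies $F$ by forgetting to $C_{p^{k-1}m}$-equivariance, and then compares the resulting simplicial map with the one representing $\lambda([x]_{k-1})^{p-1}d\lambda([x]_{k-1})$. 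The two maps differ only by powers of the generator $g$ acting on individual smash factors, and these become homotopic to the identity after geometric realization because $C_{p^{k-1}n}\subset S^1$. You correctly anticipate that the twist by $g$ in the last face map is where the equivariant subtlety lies, but the computation must be carried out at the level of explicit simplicial maps rather than via the Leibniz rule.
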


\begin{proof} 
The homotopy Mackey functors $ \underline{\pi}_*^{C_{p^sn}} \THH_{C_n}(\m{R}) $ are graded $C_{p^sn}$-Green functors because $\THH_{C_n}(\m{R})$ is an equivariant ring spectrum.  As shown in \cite{AnBlGeHiLaMa}, the spectrum $\THH_{C_n}(\m{R})$ is an $S^1$-spectrum.  As explained in \cite[Section 2.1]{HeMa04}, an $S^1$-action on a spectrum on which $2$ acts invertibly induces a differential on homotopy groups.  This applies to each of the fixed point spectra $\THH_{C_n}(\m{R})^{G}$, for any finite subgroup $G\subset S^1$, because these spectra enjoy actions of $S^1/G\cong S^1$.  This provides the required differential structure.

The designated maps
\[ r\colon  \zeta_{C_{p^\nu}}\m{\pi}_*^{C_{p^{s}n}}(\THH_{C_n}\m{R}) \to \m{\pi}_*^{C_{p^{s-\nu}n}}\big(\THH_{C_n}\m{R}\big), \]
required in part (\ref{defn_eqvt_Witt_cplx:r}) of \Cref{defn:equivariant_Witt_complex} are defined using twisted $p$-cyclotomicity in Remark \ref{designatedringmapsTHH}.

To define the map
\[
     \lambda\colon \m{\mathbb{W}}_{C_{p^sn}} (\m{R}) \to \m{\pi}_0^{C_{p^sn}} \THH_{C_n} (\m{R})
     \]  
we recall that by \cite[Theorem 5.1]{BlGeHiLa} there is a natural isomorphism
\[
\m{\pi}_0^{C_{p^sn}} \THH_{C_n}(\m{R}) \cong \m{\HH}_{C_n}^{C_{p^sn}}(\m{R})_0. 
\]
Since by definition 
\[
\m{\mathbb{W}}_{C_{p^sn}} (\m{R}) = \m{\HH}_{C_n}^{C_{p^sn}}(\m{R})_0,
\]
this yields the desired map $\lambda$.

We now verify the relations in Definition \ref{defn:equivariant_Witt_complex}. 
The relation $\lambda r = r \lambda$ holds because both maps $r$ arise from the twisted $p$-cyclotomicity of Hochschild homology for Green functors.

The twisted $p$-cyclotomic structure on $\THH_{C_n}(\m{R})$ yields an $S^1$-equivariant restriction map
\[
R\colon \rho_{p^\nu}^*\THH_{C_n}(\m{R})^{C_{p^\nu}} \to \THH_{C_n}(\m{R}),
\] 
as in \Cref{rmk:restrictionfortwistedpcycloTHH}.  

We defined \[
r\colon \zeta_{C_{p^\nu}}\underline{\pi}_0^{C_{p^sn}} \THH_{C_n}(\m{R})\to \underline{\pi}_0^{C_{p^{s-\nu}n}} \THH_{C_n} (\m{R})
\]
to be the map induced by $ R $ on homotopy Mackey functors. As the maps $d$ arise from the $S^1$-action on $\rho_q^*(\THH_{C_n}(\m{R})^{C_q})$ for each $C_q\subset S^1$, this implies the relation $dr=rd$.

The proof that the relation $\res_H^L\,d\,\tr_H^L=d $ holds is the same as the proof that $FdV = d$ in the classical case, and can be found in \cite[Lemma 1.5.1]{Hesselholt1996}. 

We now verify the relation $\res_H^L\,\tr_H^L=[L:H]$. By the double coset formula for Mackey functors,
\[
\res_H^L\,\tr_H^L(x) \, = \, \sum_{\ell \in W_{L}(H)} \ell \cdot x.\]
Hence \[\res_H^L\,\tr_H^L(x) = x + \gamma \cdot x + \gamma^2 \cdot x + \ldots \gamma^{[L:H]-1} \cdot x,\]
where we write $ \gamma $ for a generator of the Weyl group $W_{L}(H) \cong L/H = \langle\gamma\rangle$. The fact that the $L$-action extends to the circle group $S^1$ implies that $\gamma$ acts trivially on homotopy groups. Therefore, the composite $\res_H^L\,\tr_H^L$ is multiplication by $[L:H]$.

The verification of the relation $i^*_{C_{p^{k-1}n}}F d \lambda ( [-]_k) = \lambda ([-]_{k -1})^{p-1} d \lambda ([-]_{k - 1})$ requires more explanation. We prove that this relation holds in \Cref{prop:twisted_THH_power_rule}. 
\end{proof}

\begin{prop}\label{prop:twisted_THH_power_rule}
  	Suppose $\m{R}$ is a $C_n$-Tambara functor  and let $ p $ be an odd prime not dividing $ n $. Suppose further that $\m{R}(C_n/C_m)$ is a $\Z_{(p)}$-algebra for each $m$ dividing $n$.   Then for all $ m $ dividing $ n $, $k \geq 1$, and $ a \in \m{R}(C_n / C_m) $, the relation
  	\begin{equation}\label{eq:twisted_THH_power_rule}
     i^*_{C_{p^{k-1}n}} F d \lambda [a]_{k} = \lambda ([a]_{k - 1})^{p - 1} d \lambda([a]_{k - 1})
    \end{equation}
  	holds in $ \m{\pi}_1^{C_{p^{k-1}n}}(\THH_{C_{n}}\m{R})(C_{p^{k-1}n}/C_{p^{k-1}m}).$
    \end{prop}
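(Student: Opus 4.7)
The plan is to establish the identity by a two-step reduction that leverages both the norm description of the multiplicative lift and the $S^1$-action that induces $d$. The key intermediate identity to prove first is the Frobenius--power law
\[
    i^*_{C_{p^{k-1}n}}F\lambda[a]_k = \lambda([a]_{k-1})^p.
\]

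To establish this Frobenius--power law, I would unwind \Cref{defn:multiplicative_lift}: the multiplicative lift is the composite of the unit $\eta$, an internal norm $n_{C_m}^{C_{p^km}}$, the quotient $q$ to equivariant Witt vectors, and the identification $\lambda$. Using the Yoneda/externalization of the internal norm described in \Cref{rmk:externalizingnorms}, $\lambda[a]_k$ is represented by a morphism $\m{A}_{C_{p^km}} \cong N_{C_m}^{C_{p^km}}\m{A}_{C_m} \to i^*_{C_{p^km}}\THH_{C_n}(\m{R})$ factoring through the norm $N_{C_m}^{C_{p^km}}i^*_{C_m}H\m{R}$. Applying the restriction $F=\res_{C_{p^{k-1}m}}^{C_{p^km}}$ and invoking \Cref{prop:norm_as_indexed_product}, the restricted norm decomposes as a $p$-fold box product of the norm from $C_{p^{k-1}m}$; the subsequent quotient $q$ to the commutative Green functor $\m{\W}_{C_{p^{k-1}n}}(\m{R})$ (where the Weyl action is trivialized) collapses the resulting box product into the $p$-th power $\lambda([a]_{k-1})^p$.

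The second main step couples this with the Leibniz rule for $d$. Applying $d$ to the Frobenius--power law gives
\[
    d\bigl(\lambda([a]_{k-1})^p\bigr) = p\,\lambda([a]_{k-1})^{p-1}d\lambda([a]_{k-1}),
\]
since $d$ is a graded derivation on the ring $\m{\pi}_*^{C_{p^{k-1}n}}(\THH_{C_n}\m{R})(C_{p^{k-1}n}/C_{p^{k-1}m})$. To move $d$ past $F$, I would use that $F$ arises from the inclusion $\THH_{C_n}(\m{R})^{C_{p^km}}\hookrightarrow \THH_{C_n}(\m{R})^{C_{p^{k-1}m}}$, under which the residual $S^1$-actions are intertwined by the $p$-fold cover $S^1/C_{p^{k-1}m}\twoheadrightarrow S^1/C_{p^km}$; pulling back the fundamental class multiplies by $p$, producing $d\circ F = p\cdot F\circ d$ on the relevant homotopy groups. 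Combining yields
\[
    p\cdot i^*_{C_{p^{k-1}n}}Fd\lambda[a]_k = p\,\lambda([a]_{k-1})^{p-1}d\lambda([a]_{k-1}).
\]

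The main obstacle is cancelling this factor of $p$, since $p$ need not be invertible in a $\mathbb{Z}_{(p)}$-algebra. The cleanest route around this is to bypass division entirely by performing a direct simplicial calculation: represent $\lambda[a]_k$ by the $C_{p^km}$-invariant simplex $a^{\square p^k}$ in the twisted cyclic nerve of \Cref{defn_twisted_cyclic_bar}, compute the Connes $B$-operator (which realizes $d$) explicitly on this simplex as an alternating sum of degeneracy insertions, and read off the claimed identity after applying $F$ without division. The delicate point in this approach is ensuring that the twist by the $C_n$-action in the twisted cyclic bar construction interacts correctly with the indexed box-product decomposition of norms from \Cref{prop:norm_as_indexed_product}, so that the $C_{p^{k-1}m}$-orbit structure of the $B$-operator output manifestly matches the right-hand side.
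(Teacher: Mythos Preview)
Your main two-step reduction does not work, and you correctly identify why: the argument only yields the relation after multiplying both sides by $p$, and $p$ is not a unit in $\m{\pi}_*^{C_{p^{k-1}n}}(\THH_{C_n}\m{R})(C_{p^{k-1}n}/C_{p^{k-1}m})$ under the hypothesis that $\m{R}(C_n/C_m)$ is a $\Z_{(p)}$-algebra. There is no way to repair this without essentially abandoning the strategy. Your intermediate Frobenius--power law $i^*F\lambda[a]_k = \lambda([a]_{k-1})^p$ is morally correct and is implicitly present in the paper's argument, but applying $d$ to it and invoking $dF = pFd$ is a genuine dead end.

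Your fallback---a direct simplicial computation of $d$ on explicit representatives---is the right idea, and is exactly what the paper does, but your sketch is far too vague to constitute a proof. The paper does not work with the Connes $B$-operator on the Mackey-functor cyclic nerve; it works at the level of spectra. Specifically, it represents $\lambda[a]_k$ as the $C_{p^k m}$-equivariant homotopy class of an explicit map $\sphere_{C_{p^km}}\to i^*_{C_{p^km}}\THH_{C_n}\m{R}$ built from norms, units, and counits (this is \Cref{lem:spectrumlevelrepnmultlift}), realizes $d$ via the simplicial $S^1$-action $\alpha_\bullet$ on a subdivided cyclic bar construction, and then computes $i^*Fd\lambda[a]_k$ as the geometric realization of a map whose level-$q$ description is a product of $p$ twisted copies of $[a]_{k-1}$ composed with the action $\alpha$. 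The crucial point---which your sketch does not address---is that the two simplicial maps representing the two sides of (\ref{eq:twisted_THH_power_rule}) do \emph{not} agree on the nose; they differ by Weyl-group actions (powers of a generator $g$ of $C_{p^{k-1}n}$) on individual smash factors and by a permutation of factors. The argument concludes by observing that after geometric realization these Weyl actions lie in $C_{p^{k-1}n}\subset S^1$ and are therefore homotopic to the identity. Identifying the restricted composite $\varepsilon\circ N i^*\eta$ explicitly (this is the content of \Cref{lem:idcounitunitmultliftcomp}) is what makes this comparison possible, and is the step your proposal is missing.
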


This proposition is an equivariant analogue of \cite[Lemma~1.5.6]{Hesselholt1996}. Before proving Proposition \ref{prop:twisted_THH_power_rule}, we first prove some auxiliary lemmas. Our first lemma allows us to understand the multiplicative lifts $[-]_k$ via maps of spectra.

\begin{lem}\label{lem:spectrumlevelrepnmultlift}
	Let $a\in\m{R}(C_n/C_m),$ let $\eta_{{p^{k}n}}\colon  H \m{R}\to i^*_{C_{n}} N^{C_{p^kn}}_{C_{n}} H \m{R}$
	be the unit of the $(N^{C_{p^kn}}_{C_{n}}, i^*_{C_{n}})$-adjunction, and let $\varepsilon_{{p^km}}$ be the counit of the $(N^{C_{p^km}}_{C_{m}},i^*_{C_{m}})$-adjunction. Represent $a$ as a map $a\in [\sphere_{C_m},i_{C_m}^* H\m{R}]_{C_m}$.  Then $[a]_k$ is the image of postcomposing the $C_{p^km}$-equivariant homotopy class 

	\begin{multline}\label{externalizednormliftinspectra_general_m}
		\sphere_{C_{p^km}} \cong N^{C_{p^km}}_{C_m} \sphere_{C_m} \xto{Na} N_{C_m}^{C_{p^km}}i^*_{C_m}H\m{R} \xto{Ni^*(\eta_{{{p^kn}}})} N_{C_m}^{C_{p^km}}i^*_{C_m}N^{C_{p^kn}}_{C_n}H\m{R} \\ 
		\xto{\varepsilon_{{{p^km}}}} i^*_{C_{p^km}}N^{C_{p^kn}}_{C_n}H\m{R}
	\end{multline}
with $i^*_{C_{p^km}}(q)$ where $q$ is
	\[ q\colon N^{C_{p^kn}}_{C_n}H\m{R}\to H(N^{C_{p^kn}}_{C_n}\m{R}) \to H\m{\mathbb{W}}_{C_{p^kn}}(\m{R}).\]
\end{lem}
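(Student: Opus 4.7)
My approach is to unpack both sides and verify that the spectrum-level composite in \eqref{externalizednormliftinspectra_general_m}, postcomposed with $i^*_{C_{p^km}}(q)$, reproduces the algebraic construction of $[a]_k$ from \Cref{defn:multiplicative_lift} after applying $\pi_0^{C_{p^km}}$ and evaluating at $C_{p^km}/C_{p^km}$; the key device is the externalized-norm description of \Cref{rmk:externalizingnorms}.

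First, I would set up the spectrum-level/Mackey-functor-level dictionary. The identification $\pi_0^{C_m}(i^*_{C_m} H\m{R}) \cong \m{R}(C_n/C_m)$ lets us represent $a$ as a $C_m$-equivariant map $\sphere_{C_m} \to i^*_{C_m} H\m{R}$. Since the Eilenberg--MacLane functor is lax symmetric monoidal on connective Mackey functors, commutes with restriction, and interacts with the equivariant norm via the canonical comparison map $q$ of \Cref{rmk:norm_to_twisted_HH_quotient}, applying $\pi_0^{C_{p^km}}$ to \eqref{externalizednormliftinspectra_general_m} produces a composite of Mackey-functor maps in which $Na$ is the Mackey functor norm of $a$, $Ni^*(\eta_{p^kn})$ is the norm of the image under $i^*_{C_m}$ of the unit of the $(N^{C_{p^kn}}_{C_n}, i^*_{C_n})$-adjunction on Tambara functors, and $\varepsilon_{p^km}$ is the counit of the $(N^{C_{p^km}}_{C_m}, i^*_{C_m})$-adjunction.

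Next, I would evaluate this $\pi_0$-composite at the unit $[C_{p^km}/C_{p^km}] \in \m{A}_{C_{p^km}}(C_{p^km}/C_{p^km})$. By \Cref{rmk:externalizingnorms} applied inside the $C_{p^kn}$-Tambara functor $N^{C_{p^kn}}_{C_n}\m{R}$, the resulting element is precisely the internal norm $n^{C_{p^km}}_{C_m}$ applied to $\eta(a) \in i^*_{C_n}N^{C_{p^kn}}_{C_n}\m{R}(C_n/C_m) \cong N^{C_{p^kn}}_{C_n}\m{R}(C_{p^kn}/C_m)$, which matches the composition of the first three arrows in \Cref{defn:multiplicative_lift}. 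Finally, $i^*_{C_{p^km}}(q)$ realizes on $\pi_0$ the quotient $q\colon N^{C_{p^kn}}_{C_n}\m{R} \to \m{\mathbb{W}}_{C_{p^kn}}(\m{R})$ of \Cref{rmk:Witt_vectors_as_norm_quotient}, giving the last arrow of \Cref{defn:multiplicative_lift} and therefore $[a]_k$.

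The main bookkeeping obstacle will be aligning the two distinct adjunctions appearing in the composite: the $(N^{C_{p^kn}}_{C_n}, i^*_{C_n})$-adjunction whose unit introduces $\eta_{p^kn}$, and the $(N^{C_{p^km}}_{C_m}, i^*_{C_m})$-adjunction whose counit $\varepsilon_{p^km}$ externalizes the internal norm $n^{C_{p^km}}_{C_m}$ inside $N^{C_{p^kn}}_{C_n}\m{R}$. Once the compatibility between these two adjunctions on the spectrum level and their Mackey-functor shadows under $\pi_0$ is in hand, the identification of the two descriptions of $[a]_k$ is a routine application of naturality and the lax monoidality of $H$.
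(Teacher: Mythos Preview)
Your proposal is correct and follows essentially the same route as the paper: both arguments hinge on the externalized-norm description of \Cref{rmk:externalizingnorms} together with the fact that the Mackey functor norm is $\m{\pi}_0$ of the spectrum-level norm on Eilenberg--MacLane spectra, and the replacement of $H\m{A}_{C_{p^km}}$ by $\sphere_{C_{p^km}}$ via connectivity. The only cosmetic difference is direction---the paper starts from the algebraic definition of $[a]_k$ and lifts to spectra, whereas you start from the spectrum-level composite and push down to $\m{\pi}_0$---but the content is identical.
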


\begin{proof}
	Recall from \Cref{rmk:externalizingnorms} that the norm of $\eta_{{p^kn}}(a)$ from $C_m$ to $C_{p^k m}$ is represented by the map of $C_{p^km}$-Mackey functors
	\begin{equation}\label{eq:norm_externalized_recalled}
    \m{A}_{C_{p^km}}\cong N^{C_{p^km}}_{C_m} \m{A}_{C_m}  \xto{N(\eta_{{{p^kn}}}\circ a)} N_{C_m}^{C_{p^km}}i^*_{C_m}N^{C_{p^kn}}_{C_n}\m{R} \xto{\varepsilon_{{{p^km}}}} i^*_{C_{p^km}} N^{C_{p^kn}}_{C_n}\m{R}.
  \end{equation}
	Since the norm of a Mackey functor can be defined as $\m{\pi}_0$ of the norm of the corresponding Eilenberg--MacLane spectrum \cite[Theorem 1.3]{ullman2013tambara}, the composite (\ref{eq:norm_externalized_recalled}) is equivalent to $\m{\pi}_0^{C_{p^km}}$ of the map 
	\[ N^{C_{p^km}}_{C_m} H\m{A}_{C_m}  \xto{N(\eta_{{{p^kn}}}\circ a)} N_{C_m}^{C_{p^km}}i^*_{C_m}N^{C_{p^kn}}_{C_n}H\m{R} \xto{\varepsilon_{{{p^km}}}} i^*_{C_{p^km}} N^{C_{p^kn}}_{C_n}H\m{R}\]
	of $ C_{p^km} $-spectra; the fact that  $\sphere_{C_{p^km}}$ is $(-1)$-connected and $\sphere_{C_{p^km}} \to H\m{A}_{C_p^km}$ induces the identity on $\m{\pi}^{C_{p^km}}_0$ finishes the proof.
\end{proof}

We will also need a further lemma to use in identifying the multiplicative lifts. The interesting part of the multiplicative lifts is in the construction before passage to the Witt vectors---that is, the map we obtain before composing with the map $q$ in \Cref{lem:spectrumlevelrepnmultlift}.  By applying our next lemma in the case $Q=N^{C_{p^{k-1}n}}_{C_n}H\m{R}$, we get an explicit identification of this portion of the multiplicative lifts.  This will be valuable in proving \Cref{prop:twisted_THH_power_rule}.

\begin{lem} \label{lem:idcounitunitmultliftcomp} Let $Q$ be a commutative $C_{p^{k-1}n}$-ring spectrum where $p$ is relatively prime to $n$. Let 
\[\eta_{{p^{k}n}}\colon  Q\to i^*_{C_{p^{k-1}n}} N^{C_{p^kn}}_{C_{p^{k-1}n}}Q\]
be the unit of the $(N^{C_{p^kn}}_{C_{p^{k-1}n}}, i^*_{C_{p^{k-1}n}})$-adjunction  and let $\varepsilon_{{p^k}}$ be the counit of the $(N^{C_{p^k}}_{C_{p^{k-1}}},i^*_{C_{p^{k-1}}})$-adjunction.
Then the restriction of the composite 
 \[ N^{C_{p^k}}_{C_{p^{k-1}}}i^*_{C_{p^{k-1}}}Q\xto{ N_{C_{p^{k-1}}}^{C_{p^k}} i^*_{C_{p^{k-1}}} (\eta_{{p^{k}n}})} N^{C_{p^k}}_{C_{p^{k-1}}}i^*_{C_{p^{k-1}}}N^{C_{p^kn}}_{C_{p^{k-1}n}}Q \xto{\varepsilon_{{p^k}}}i^*_{C_{p^k}}N^{C_{p^kn}}_{C_{p^{k-1}n}}Q\] 
to $C_{p^{k-1}}$-spectra 
\[ Q^{\sma p} \xto{(g^{j_1}\sma \dotsb\sma g^{j_{p}})\circ \varphi} Q^{\sma p},\]
where $g$ is a generator of $C_{p^{k-1}n}$, $\varphi$ is a permutation of the factors, and the $ j_i $ are integers.
\end{lem}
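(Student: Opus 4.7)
The plan is to apply Hoyer's formula (Proposition~\ref{prop:norm_as_indexed_product}) to identify the restriction to $C_{p^{k-1}}$-spectra of each term in the composite as an indexed smash product of $i^*_{C_{p^{k-1}}}Q$, and then trace the unit and counit through these identifications. Since $p$ and $n$ are coprime, all the relevant index computations simplify nicely, and the answer will be forced by naturality up to bookkeeping of coset representatives.

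First I would identify each piece. Taking $G = C_{p^k}$, $H = K = C_{p^{k-1}}$ in Hoyer's formula gives $|G/HK| = p$ and $K \cap H = C_{p^{k-1}}$, so $i^*_{C_{p^{k-1}}} N^{C_{p^k}}_{C_{p^{k-1}}} i^*_{C_{p^{k-1}}}Q \simeq (i^*_{C_{p^{k-1}}}Q)^{\wedge p}$. Applying Hoyer with $G = C_{p^kn}$, $H = C_{p^{k-1}n}$, $K = C_{p^{k-1}}$ (using that $HK = C_{p^{k-1}n}$ since $p$ is coprime to $n$) also yields $|G/HK| = p$ and $K \cap H = C_{p^{k-1}}$, so $i^*_{C_{p^{k-1}}} N^{C_{p^kn}}_{C_{p^{k-1}n}}Q \simeq (i^*_{C_{p^{k-1}}}Q)^{\wedge p}$. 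Iterating then identifies the restriction of the middle spectrum with $(i^*_{C_{p^{k-1}}}Q)^{\wedge p^2}$, indexed by pairs of cosets.

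Next I would track the unit and counit under these identifications. The unit $\eta_{p^kn}$ of the norm--restriction adjunction for commutative ring spectra is the standard insertion at the identity coset, $q \mapsto q \wedge 1 \wedge \cdots \wedge 1$, with the $C_{p^{k-1}n}$-action on the other smash factors coming from translation of chosen coset representatives in $C_{p^kn}/C_{p^{k-1}n}$. After smashing $p$ times via $N^{C_{p^k}}_{C_{p^{k-1}}}$ and restricting to $C_{p^{k-1}}$, this becomes a map $(i^*_{C_{p^{k-1}}}Q)^{\wedge p} \to (i^*_{C_{p^{k-1}}}Q)^{\wedge p^2}$ that inserts each $Q$-factor into one slot of its block of size $p$, with the remaining $p^2-p$ slots filled by $1$'s (possibly twisted by $C_{p^{k-1}}$-action). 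The counit $\varepsilon_{p^k}$, under the corresponding Hoyer identification, is a reindexing of factors parametrized by pairs of cosets together with the ring multiplication along each coset orbit.

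Composing the two, the $p^2 - p$ factors of $1$ contributed by the unit become identities under the counit's multiplication, leaving a map $(i^*_{C_{p^{k-1}}}Q)^{\wedge p} \to (i^*_{C_{p^{k-1}}}Q)^{\wedge p}$. On underlying spectra this is exactly a permutation $\varphi$ of the $p$ surviving $Q$-factors, together with twists $g^{j_1}, \ldots, g^{j_p}$ by elements of $C_{p^{k-1}n}$ inherited from the chosen coset representatives in $C_{p^kn}/C_{p^{k-1}n}$. The main obstacle is the careful bookkeeping required to make the transversals for Hoyer's formula compatible across the two different norm--restriction adjunctions, so that the exponents $j_i$ are well-defined; fortunately the present lemma only asserts the qualitative form $(g^{j_1} \sma \cdots \sma g^{j_p}) \circ \varphi$, which follows from naturality once the Hoyer identifications are in place.
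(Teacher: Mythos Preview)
Your proposal is correct in outline and reaches the same conclusion, but the route differs from the paper's in a meaningful way. The paper does not invoke Hoyer's double-coset formula as a black box; instead it works directly with an explicit model of the norm using chosen coset representatives (see \Cref{choiceofcosetrepsrmk}). The counit $\varepsilon_{p^k}$ is described concretely as the composite
\[
(N^{C_{p^kn}}_{C_{p^{k-1}n}}Q)^{\sma p} \xto{\id\sma h\sma h^2\sma\cdots\sma h^{p-1}} (N^{C_{p^kn}}_{C_{p^{k-1}n}}Q)^{\sma p} \xto{\text{mult}} i^*_{C_{p^k}}N^{C_{p^kn}}_{C_{p^{k-1}n}}Q,
\]
where $h$ is a generator of $C_{p^k}\subset C_{p^kn}$. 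The paper then computes how $h$ acts on $Q^{\sma p}$: it is the $n$-fold iterate of the cyclic permutation $\tau$, with a $g$-twist applied whenever a factor wraps from last to first position. The coprimality of $p$ and $n$ enters precisely here, ensuring that as $i$ ranges over $0,\dots,p-1$ the permutations $\tau^{ni}$ exhaust all cyclic permutations, so that after multiplying against the image of $\eta_{p^kn}$ (which inserts units in all but one slot of each block) exactly one nontrivial $Q$-factor survives in each output position.

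Your abstract approach via Hoyer and naturality is valid, but your description of the counit as ``reindexing of factors parametrized by pairs of cosets together with ring multiplication along each coset orbit'' is where the real content hides: one still has to check that this reindexing is compatible with the $C_{p^{k-1}n}$-action in such a way that the discrepancies are powers of a single generator $g$. The paper's explicit formula for $\varepsilon_{p^k}$ makes this transparent, and also supplies the concrete description (e.g.\ which permutation $\varphi$ and which exponents $j_i$ arise for a given choice of transversal) that is needed in the later application to \Cref{prop:twisted_THH_power_rule}.
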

These powers and the permutation are determinable from the relationship between $n$ and $p$, but we will not need explicit formulas for them.

\begin{rmk}\label{choiceofcosetrepsrmk}
The explicit formula here relies on an explicit choice of construction of the norm $N_{C_{p^{k-1}n}}^{C_{{p^k}n}}Q$ because in order to calculate $\varepsilon_{p^k}$, we need to understand the $C_{p^k}$-action on $N_{C_{p^{k-1}n}}^{C_{p^kn}}Q$.  As discussed in \cite{BohmannRiehl}, a construction of the norm $N_H^GQ$ relies on an explicit choice of coset representatives for $H$ in $G$ to define the action of $G$ on $N_H^GQ$.  In our case, $G=C_{p^kn}$ and $H=C_{p^{k-1}n}$.  If we let $\gamma$ denote a generator of $C_{p^{k}n}$, then our construction uses the coset representatives $1,\gamma, \gamma^2,\dots, \gamma^{p-1}$.  There are other natural choices of coset representatives as well.  We use this particular set of coset representatives because it is most compatible with the construction of the twisted cyclic bar construction, which we view as the norm from $H$ to $S^1$ below.
\end{rmk}

\begin{proof}
We can understand the composite in the statement explicitly. The unit map $\eta_{{p^{k}n}}$  is given by
\[ Q\cong Q \sma \sphere^{\sma p-1}\xto{\id\sma 1^{\sma p-1}} Q^{\sma p}\]
where $\sphere$ is the $C_{p^{k-1}n}$-sphere spectrum and $1\colon\sphere\to Q$ is the unit map of the $C_{p^{k-1}n}$-ring spectrum $Q$.

Viewing $N^{C_{p^k}}_{C_{p^{k-1}}}i^*_{C_{p^{k-1}}} N^{C_{p^kn}}_{C_{p^{k-1}n}} Q$ as $p$ smash copies of $N^{C_{p^kn}}_{C_{p^{k-1}n}}Q$, the map
 \[\varepsilon_{{p^k}}\colon N^{C_{p^k}}_{C_{p^{k-1}}}i^*_{C_{p^{k-1}}} N^{C_{p^kn}}_{C_{p^{k-1}n}} (Q)\to i_{C_{p^k}}^* N^{C_{p^kn}}_{C_{p^{k-1}n}}Q\] 
is the composite
\[ \begin{tikzcd}
(N^{C_{p^kn}}_{C_{p^{k-1}n}}Q)^{\sma p} \ar[dr, "\id\sma h\sma h^2\sma \dots\sma h^{p-1}", swap]\ar[r,"\varepsilon_{p^k}"]& i_{C_{p^k}}^* N^{C_{p^kn}}_{C_{p^{k-1}n}}Q\\
&(N_{C_{p^{k-1}n}}^{C_{p^{k}n}}Q)^{\sma p}\ar[u,"\text{mult}"]
\end{tikzcd}
\]
where $h$ is the generator of $C_{p^{k}}\subset C_{p^kn}$ and $h^i\colon N_{C_{p^{k-1}n}}^{C_{p^{k}n}} Q \to  N^{C_{p^kn}}_{C_{p^{k-1}n}} Q$ is action by $h^i$. Note that by the construction of the indexed smash product $(N_{C_{p^{k-1}n}}^{C_{p^{k}n}} Q)$ with the choice of coset representatives from \Cref{choiceofcosetrepsrmk}, $h$ acts as the $n$-fold iterate of the cyclic permutation of the $p$ smash factors in $N^{C_{p^kn}}_{C_{p^{k-1}n}}Q\simeq Q^{\sma p}$, plus the application of a generator $g$ of $C_{p^{k-1}n}$ every time a factor moves from the last to the first position.  
For each $i=0,\dots, p-1$, the action of $h^i$ is of the form
 \[(g^{i_1}\sma \dotsb \sma g^{i_{p}})\circ (\tau)^{ni}\]
where $\tau$ is the permutation that brings the last factor to the front. One can determine a formula for the exact powers $i_j$ of $g$ that occur, but this is unnecessary for our argument. The key is that because $p$ is relatively prime to $n$, as we run over the actions of $h^i$, the $\tau^{ni}$ part yields all possible cyclic permutations.

Thus $\varepsilon_{{p^k}}$ is the product over all cyclic reorderings of $N_{C_{p^{k-1}n}}^{C_{p^{k}n}} Q$, up to actions of powers of $g\in C_{p^{k-1}n}$ in each smash factor.

Hence, the composite $\varepsilon_{{p^k}}\circ Ni(\eta_{p^{k}n})$ is given by 
\[ Q^{\sma p} \xto{(g^{j_1}\sma \dotsb\sma g^{j_{p}})\circ\varphi} Q^{\sma p};\]
in other words, this is the action by some power of the generator $g$ of $C_{p^{k-1}n}$ in each smash factor, up to a permutation $\varphi$ of the factors.
\end{proof}

\begin{exmp}
    Let $p=3$, $k=1$, and $n=2$.  We calculate $\varepsilon_{3}\circ N_{e}^{C_3}i^*_e(\eta_{6})$ when $Q=H\m{R}$ explicitly to illustrate the previous lemma.  In this case, the map $\eta_{6}$   is given by
    \[
    H\m{R} \cong H\m{R}\wedge \mathbb{S}^{\wedge 2}\xrightarrow{\id\wedge 1\wedge 1} H\m{R}^{\wedge 3},
    \]
    while the map $\varepsilon_{3}\colon N_e^{C_3} i_e^* N_{C_2}^{C_6}H\m{R} \to i^*_{C_3} N_{C_2}^{C_6}H\m{R}$ is given by the composition
    \[
    (N_{C_2}^{C_6}H\m{R})^{\wedge 3} \xrightarrow{\id\wedge h\wedge h^2} (N_{C_2}^{C_6}H\m{R})^{\wedge 3}
    \xrightarrow{\text{mult}} i^*_{C_3}N_{C_2}^{C_6} H\m{R},
    \]
    where $h$ is a generator of $C_3\subset C_6$.
    In each factor of $N_{C_2}^{C_6} H\m{R}$, we have an action by $\id$, $h$, or $h^2$.  View $N_e^{C_3}i_e^*N_{C_2}^{C_6}H\m{R}$ as nine indexed smash copies of $H\m{R}$,
\[ (H\m{R})_1\sma\dots \sma(H\m{R})_9,\]
with $\id$ acting on the first three smash factors, $h$ on the second three and $h^2$ on the third three. The identity, of course, preserves $H\m{R}_1\sma H\m{R}_2\sma H\m{R}_3$.   Observe that we can take $h$ to be the square of a generator of the $C_6$ action on $N^{C_6}_{C_2}H\m{R}$.  Thus  $h$ acts by cyclically rotating twice, with an action of the generator $g$ of $C_2$ on each factor that moves from last to first spot.  Although spectra do not have underlying sets, because these maps arise from permutations of smash factors and actions of $g$ on individual smash factors, $h$ and $h^2$ are best specified by writing what the corresponding permutation and action maps do to a tuple of elements in a product of $C_2$-sets.  From this perspective, the map $h$ is the action/permutation combination determined by sending
\[ (x_4,x_5,x_6)\mapsto (gx_5, gx_6, x_4)\]
and the map $h^2$ is the action/permutation combination determined by sending
\[ (x_7,x_8,x_9)\mapsto (g^2x_9, gx_7, gx_8)=(x_9,gx_7,gx_8).\]
Thus the map $1\sma h\sma h^2\colon (H\m{R})^{\sma 9}\to (H\m{R})^{\sma 9}$ is the action/permutation combination specified by
\[(x_1,\dots, x_9)\mapsto (x_1,x_2,x_3,gx_5,gx_6,x_4,x_9,gx_7,gx_8).\]
The counit $\varepsilon_{3}$ applies $1\sma h\sma h^2$ and then views $(H\m{R})^{\sma 9}$ as three smash copies of the ring spectrum $H\m{R}\sma H\m{R}\sma H\m{R}$ and multiplies these together.

If we apply $\varepsilon_{3}$ to the image of the unit map $N_e^{C_3}i_e^*(\eta_{6})$, we see in each of the  three smash copies of $H\m{R}\sma H\m{R}\sma H\m{R}$, only one factor will not be the image of the unit $\sphere\to H\m{R}$.  The action of $g$ on the image of the unit is trivial and multiplication with the unit is also trivial.  This allows us to conclude that the overall map is
\[
\varepsilon_{3}\circ N^{C_3}_ei^*_e(\eta_{6})\colon H\m{R}^{\wedge 3} \xrightarrow{(\id\wedge g\wedge \id)\circ \varphi} H\m{R}^{\wedge 3}
\]
where $\varphi$ is the permutation $(1)(23)$.

\end{exmp}

Equipped with these lemmas, we proceed to the proof of \Cref{prop:twisted_THH_power_rule}.

\begin{proof}[{Proof of \Cref{prop:twisted_THH_power_rule}}]
In order to simplify the discussion, we first concentrate on the special case of the relation $i^*_{C_{p^{k-1}n}}F d \lambda [a]_k = \lambda ([a]_{k - 1})^{p-1} d (\lambda[a]_{k-1})$ when $k=1$ and $a\in \m{R}(C_n/e)$.  We will see that the cases $k \in \mathbb{Z}_{> 1}$ and $C_m\subset C_n$ are similar to this special case.  Our overall strategy is to realize both sides of the relation as maps of simplicial spectra and show that their geometric realizations are homotopic.

We begin by analyzing the left hand side of the relation, $i^*_{C_n}F d \lambda [a]_1.$ For $a\in \m{R}(C_n/e)$, we view $a$ as a map $\sphere\xto{a} i_e^*H\m{R}$ and construct $\lambda[a]_1$ as the homotopy class of a $C_{p}$-equivariant map \newline $\sphere_{C_{p}}\to i_{C_{p}}^*(\THH_{C_n}\m{R}).$ Given $a\in [\sphere, i_e^*H\m{R}]_e$, by \Cref{lem:spectrumlevelrepnmultlift} and \Cref{rmk:norm_to_twisted_HH_quotient} the element $\lambda[a]_1\in [\mathbb{S}_{C_{p}},i_{C_{p}}^*\THH_{C_n}\m{R}]_{C_{p}}$ is the $ C_p $-equivariant homotopy class of the composite map
\begin{multline*}
	\mathbb{S}_{C_{p}} \cong N^{C_{p}}_{e} \mathbb{S} \xto{N_e^{C_{p}}a} N_{e}^{C_{p}}i^*_{e}H\m{R}  \xto{N_e^{C_{p}}i_e^*(\eta_{{pn}})} N_{e}^{C_{p}}i^*_{e}N^{C_{pn}}_{C_n}H\m{R} \\
	\xto{\varepsilon_{p}} i^*_{C_{p}}N^{C_{pn}}_{C_n}H\m{R} 
	\xto{\eta_{S^1}} i^*_{C_{p}}N^{S^1}_{C_{pn}}N^{C_{pn}}_{C_n}H\m{R} \cong i^*_{C_{p}}N^{S^1}_{C_n}H\m{R} \cong i^*_{C_p} \THH_{C_n}\m{R},
\end{multline*}

where $\eta_{{pn}}$ is the unit of the $(N_{C_n}^{C_{pn}}, i^*_{C_n})$-adjunction, $\varepsilon_{p}$ is the counit of the $(N_e^{C_p}, i^*_e)$-adjunction, and $\eta_{S^1}$ is the unit of the $(N_{C_{pn}}^{S^1}, i^*_{C_{pn}})$-adjunction. 

We use the identification $\THH_{C_n}\m{R}\simeq N^{S^1}_{C_n}H\m{R}$ from \cite{AnBlGeHiLaMa}, together with the equivalence of left adjoints $N^{S^1}_{C_{pn}}N^{C_{pn}}_{C_n}\cong N^{S^1}_{C_n}$.

The map $\eta_{S^1}$ has a simplicial description. Regarding $ N^{C_{pn}}_{C_n} H\m{R}$ as a constant simplicial object, the map $\eta_{S^1}$ is the geometric realization of the map of simplicial $C_{pn}$-spectra
\[ N^{C_{pn}}_{C_n} H\m{R}\xto{\eta_{S^1}} N^{\cy, C_{pn}}_\bullet N^{C_{pn}}_{C_n}H\m{R}.\]
given on level $q$ by 
\[N^{C_{pn}}_{C_n} H\m{R}\cong N^{C_{pn}}_{C_n}H\m{R}\sma \sphere^{\sma q}\xto{\id\sma 1\sma\dots \sma 1} \left(N^{C_{pn}}_{C_n} H\m{R}\right)^{\sma q+1}.\]

We next turn to the application of the differential $d$. Recall that the differential comes from the $S^1$-action on $\THH_{C_n}\m{R}$.  Because we're working $C_p$-equivariantly, we give a simplicial description of this action that is explicitly $C_p$-equivariant.

The explicit description of this action requires a slightly different model for $N^{S^1}_{C_{pn}}N^{C_{pn}}_{C_n}H\m{R}\cong N^{S^1}_{C_n}H\m{R}$. As above,  $N^{S^1}_{C_{pn}}N^{C_{pn}}_{C_n}H\m{R}$ is the geometric realization of $N^{\cy,C_{pn}}_\bullet N_{C_n}^{C_{pn}}H\m{R}$.
There is an isomorphism 
\begin{equation}\label{Lambdapnopisomorphism}  i^*_{C_p}N^{\cy,C_{pn}}_\bullet N^{C_{pn}}_{C_n} H\m{R} \cong i^*_{C_p}\sd_{p}  N^{\cy,C_n}_\bullet H\m{R}
\end{equation}
of $\Lambda_{pn}^\op$-diagrams in $C_p$-spectra. In fact, this isomorphism is the simplicial manifestation of the isomorphism of left adjoints $N^{S^1}_{C_{pn}}N^{C_{pn}}_{C_n}\cong N^{S^1}_{C_n}$ mentioned above.  
On simplicial level $ q $, this is the standard shuffle:
\begin{equation*}
    i^*_{C_p} \left(N^{C_{pn}}_{C_n} H\m{R}\right)^{\sma (q+1)} = i^*_{C_p}\left(H\m{R}^{\sma p}\right)^{\sma (q+1)} \cong  i^*_{C_p}\left(H\m{R}^{\sma (q+1)}\right)^{\sma p} \,.
\end{equation*}

The $S^1$-action on $\lvert\sd_pN^{\cy,C_{n}}_\bullet H\m{R}\rvert$ can be described simplicially as a map
\begin{equation}\label{eq:simplicial_circle_action}
    \alpha_\bullet\colon S^1_{pn\bullet}\sma \sd_{p}N^{\cy,C_n}_\bullet H\m{R}\to \sd_{p}N^{\cy,C_n}_\bullet H\m{R}
\end{equation}
of $\Lambda_{pn}^\op$-spectra.  Here $S^1_{pn\bullet}$ is a $\Lambda_{pn}^\op$-model of the simplicial circle;  for any integer $l$, we let $S^1_{l\bullet}$ denote the $l$-fold subdivision of the standard simplicial model of the circle $S^1_\bullet$.  The cyclic structure maps on $S^1_{pn\bullet}\sma \sd_pN^{\cy,C_n}_\bullet H\m{R}$ are given as in \cite[\S1.4, p.~15]{Hesselholt1996}.
Explicitly, at simplicial level $q$, we view $C_{pn(q+1)}\sma (H\m{R}^{\sma p(q+1)}) \cong \bigvee_{C_{pn(q+1)}} H\m{R}^{\sma p(q+1)}$ and, as is implicit in \cite[\S 1.4]{Hesselholt1996}, the map
\[\bigvee_{C_{pn(q+1)}} H\m{R}^{\sma p(q+1)} \xto{\alpha} H\m{R}^{\sma p(q+1)}\]
is given on the wedge summand indexed by the generator of $C_{pn(q+1)}$ by the map that brings the first smash factor cyclically around to the last and acts by $g^{-1}$, where $g$ is the generator of $C_n$.  The map on the other wedge summands is determined by $C_{pn(q+1)}$-equivariance.
We may thus write the map representing $d\lambda[a]_1$ 
as the homotopy class in $[\Sigma^{\infty}_{C_p}S^1,i^*_{C_{p}}\THH_{C_n}\m{R}]_{C_{p}}$ of the geometric realization of the composite \begin{multline*}
 S^1_{pn\bullet}\sma N^{C_{p}}_e\sphere \xto{Na} S^1_{pn\bullet}\sma N^{C_{p}}_ei^*_eH\m{R}\xto{\varepsilon_{p}\circ \eta_{{pn}}} S^1_{pn\bullet}\sma i_{C_{p}}^*N^{C_{pn}}_{C_n}H\m{R}\\
\xto{\eta_{S^1}} S^1_{pn\bullet}\sma i^*_{C_{p}}N^{\cy,C_{pn}}_\bullet N^{C_{pn}}_{C_n}H\m{R}
\xto{\cong}S^1_{pn\bullet}\sma \sd_{p}N^{\cy,C_n}_\bullet H\m{R}\xto{\alpha} \sd_{p}N^{\cy, C_n}_\bullet H\m{R}
\end{multline*}
of maps of simplicial $C_{p}$-spectra. 
By convention, absence of a $\bullet$ indicates a constant simplicial object.

Since $F$ is the restriction map on graded Green functors, the operation $i^*_{C_n}F$ is given by the map 
\[ [\Sigma^{\infty}_{C_p}S^1,i_{C_p}^*\THH_{C_n}\m{R}]_{C_{p}}\to [\Sigma^{\infty}S^1,i_e^*\THH_{C_n}\m{R}]_e \]
that forgets from $C_{p}$-equivariant homotopy classes to underlying homotopy classes of maps. This means that $i^*_{C_n}Fd\lambda[a]_1$ is the non-equivariant homotopy class of the geometric realization of the composite
\begin{equation}\label{FDlambdax1compositemap}
\begin{split}
 S^1_{pn\bullet}\sma N^{C_{p}}_e\sphere \xto{Na} S^1_{pn\bullet}\sma N^{C_{p}}_ei^*_eH\m{R}\xto{\varepsilon_{p} \circ \eta_{{pn}}} S^1_{pn\bullet}\sma i_{C_{p}}^*N^{C_{pn}}_{C_n}H\m{R}\xto{\eta_{S^1}} S^1_{pn\bullet}\sma i^*_{C_{p}}N^{\cy,C_{pn}}_\bullet N^{C_{pn}}_{C_n}H\m{R}\\
\xto{\cong}S^1_{pn\bullet}\sma \sd_{p}N^{\cy,C_n}_\bullet H\m{R}\xto{\alpha} \sd_{p}N^{\cy, C_n}_\bullet H\m{R}\xto{d_0^{(\bullet+1)(p-1)}} N^{\cy,C_n}_\bullet H\m{R}
\end{split}
\end{equation}
where $d_0^{(\bullet+1)(p-1)}$ is the usual simplicial map that on realization is homotopic to the equivalence $\big| \sd_{p}N^{\cy,C_n}_\bullet H\m{R}\big|\to \big|N^{\cy,C_n}_\bullet H\m{R}\big|$. 
On simplicial level $q$, the map $d_0^{(q+1)(p-1)}$ multiplies the left-most smash factors of $H\m{R}^{\sma(q+1)p}$ until there are only $q+1$ factors remaining.

Using \Cref{lem:idcounitunitmultliftcomp} and unravelling the definition above we see that the composite
\[N^{C_p}_ei^*_{e}H\m{R}\xto{\eta_{S^1}\circ \varepsilon_{C_p} \circ\eta_{C_{pn}}} \sd_pN^{\cy,C_n}_\bullet H\m{R}\]
is given on simplicial level $q$ by the map
\[ \underbrace{H\m{R}\sma \dots \sma H\m{R}}_{\text{$p$ times}}\xto{(g^{j_1}\sma 1^{\sma q})\sma \dots \sma (g^{j_p}\sma 1^{\sma q})\circ\varphi} \underbrace{(H\m{R}^{\sma q+1})\sma \dots \sma (H\m{R}^{\sma q+1})}_{\text{$p$ times}} \]
where again $1\colon \sphere \to H\m{R}$ denotes the unit and $\varphi$ is some permutation of the $p$ smash factors.  At simplicial level $q$, the entire composite (\ref{FDlambdax1compositemap}) is thus given by the map
\begin{equation}\label{unpackingofFlambdaa1oncomponents}
\begin{tikzcd}
\underbrace{\sphere\sma \dots \sma\sphere}_{\text{$p$ times}} \ar[r, "{a\sma \dots \sma a}"] &\underbrace{H\m{R}\sma \dots \sma H\m{R}}_{\text{$p$ times}}\ar[r,"{(1^{\sma q}\sma g^{j_2})\sma \dots\sma (1^{\sma q}\sma g^{j_p}) \sma (1^{\sma q}\sma g^{j_1-1})\circ \varphi}"] & [+13em]\underbrace{(H\m{R}^{\sma q+1})\sma \dots \sma (H\m{R}^{\sma q+1})}_{\text{$p$ times}}\ar[d,swap,"{\text{mult}}"]\\
&& \underbrace{H\m{R}\sma \dots \sma H\m{R}}_{\text{$q+1$ times}}
\end{tikzcd}
\end{equation}
on the wedge summand of $S^1_{pn\bullet}\sma N^{C_p}_e\sphere$  indexed by the generator of $C_{pn(q+1)}$.  The map labelled $\text{mult}$ multiplies all the smash factors except for the last $q$.  The key observation is that this multiplication produces a product of $p-1$ images of $a$ under powers of $g$.

We next turn to the right hand side of the relation, $\lambda[a]_0^{p-1}d\lambda[a]_0$, again in the case where $a\in \m{R}(C_n/e)$.  Recalling that $[-]_0$ is the identity, a similar analysis shows that $\lambda[a]_0^{p-1}d\lambda[a]_0$ is the realization of the simplicial map
\begin{multline}\label{lambdap-1adlambdaa}
 S^1_{n\bullet}\sma \sphere^{\sma p}\xto{\id \sma a^{\sma p}} S^1_{n\bullet}\sma H\m{R}^{\sma p}\xto{\id \sma \eta_{S^1}^{\sma p}} S^1_{n\bullet}\sma (N^{\cy, C_n}_\bullet H\m{R})^{\sma p}\xto{\id\sma\text{mult}\sma \id} S^1_{n\bullet}\sma N^{\cy, C_n}_\bullet H\m{R}\sma N^{\cy, C_n}_\bullet H\m{R}\\
 \xto{\cong} N^{\cy, C_n}_\bullet H\m{R}\sma S^1_{n\bullet} \sma N^{\cy, C_n}_\bullet H\m{R} \xto{\id\sma \alpha} N^{\cy,C_n}_\bullet H\m{R}\sma N^{\cy,C_n}_\bullet H\m{R}\xto{\mu} N^{\cy, C_n}_\bullet H\m{R}.
 \end{multline}
 Here, the $\text{mult}$ map multiplies the first $p-1$ copies of $N^{\cy, C_n}_\bullet H\m{R}$. Looking just at the smash factors of $\sphere$ being multiplied, at simplicial level $q$  the composite $\text{mult} \circ \eta_{S^1}^{\sma p-1} \circ a^{\sma p-1} $ is equivalent to the map
\[ \sphere^{\sma q+1}\xto{a^{p-1}\sma 1^{\sma q}} \underbrace{H\m{R}\sma \dots \sma H\m{R}}_{\text{$q+1$ times}}.\]
As in the subdivided case of (\ref{eq:simplicial_circle_action}), the action map $\alpha_\bullet\colon S^1_{n\bullet}\sma N^{\cy, C_n}_\bullet H\m{R}\to N^{\cy, C_n}_\bullet H\m{R}$ is induced on simplicial level $q$ on the wedge summand of 
\[\bigvee_{C_{n(q+1)}} H\m{R}^{\sma q+1}\]
indexed by the generator of $C_{n(q+1)}$ by the map that sends the first smash factor to the last and acts via $g^{-1}$, where $g$ is the generator of $C_n$.
Ultimately, $\lambda[a]_0^{p-1}d\lambda[a]_0$ is the homotopy class of the realization of the map that on simplicial level $q$ and the wedge summand of $\bigvee_{C_{n(q+1)}}\sphere$ indexed by the generator is given by
\[\sphere^{\sma q+1}\xto{a^{p-1}\sma 1^{\sma q-1}\sma g^{-1}a} H\m{R}^{\sma q+1}.\]
While the maps (\ref{FDlambdax1compositemap}) and (\ref{lambdap-1adlambdaa}) do not agree on the nose, they do represent homotopy equivalent elements: once we geometrically realize and obtain the $S^1$-action, the action of an element of $C_n\subset S^1$ is homotopic to the identity. This verifies the identity when $k=1$.

We now analyze $i^*_{C_{p^{k-1}n}}F d \lambda [a]_k$ when $k > 1.$ For $a\in \m{R}(C_n/e)$, we view $a$ as a map $\mathbb{S}\xto{a} i_e^*H\m{R}$.  Then the element $\lambda[a]_k\in [\sphere_{C_{p^k}} ,i_{C_{p^k}}^*\THH_{C_n}\m{R}]_{C_{p^k}}$ is the class of the composite
\begin{multline} \label{eq:compLambdaakbig}
\sphere_{C_{p^k}}\cong N^{C_{p^k}}_{e} \mathbb{S} \xto{N_e^{C_{p^k}}a} N_{e}^{C_{p^k}}i^*_{e}H\m{R} \xto{N_e^{C_{p^k}}i^*_e(\eta_{p^kn})} N_{e}^{C_{p^k}}i^*_{e}N^{C_{p^kn}}_{C_n}H\m{R}\\\xto{\varepsilon_{p^k}} i^*_{C_{p^k}}N^{C_{p^kn}}_{C_n}H\m{R} 
\xto{\eta_{S^1}} i^*_{C_{p^k}}N^{S^1}_{C_{p^kn}}N^{C_{p^kn}}_{C_n} H\m{R}.
\end{multline}
Since we need to compare the left and right hand sides of our relation, we identity a relationship between $[a]_k$ and $[a]_{k-1}$ implicit in composite (\ref{eq:compLambdaakbig}).  Observe that the diagram 
\begin{equation}
\begin{tikzcd} \label{diag:identifyLifts}
	N^{C_{p^k}}_ei^*_eH\m{R}\ar[dd, "\cong"]\ar[r,"\eta_{p^kn}"]  & N^{C_{p^k}}_ei^*_eN^{C_{p^kn}}_{C_n} H\m{R}\ar[d,"\cong"]\ar[r,"\varepsilon_{{p^k}}"] &i_{C_{p^k}}^*N^{C_{p^kn}}_{C_n}H\m{R} \\
	& N^{C_{p^k}}_{C_{p^{k-1}}}N^{C_{p^{k-1}}}_e i^*_eN^{C_{p^kn}}_{C_n}H\m{R} \ar[r,"\varepsilon_{{p^{k-1}}}"]& N^{C_{p^k}}_{C_{p^{k-1}}}i_{C_{p^{k-1}}}^*N^{C_{p^kn}}_{C_{p^{k-1}n}}N^{C_{p^{k-1}n}}_{C_n}H\m{R}\ar[u,"\varepsilon_{{p^{k}}}"]\\
	N^{C_{p^k}}_{C_{p^{k-1}}}N^{C_{p^{k-1}}}_e i^*_eH\m{R}\ar[r,"\eta_{{p^{k-1}n}}"] & N^{C_{p^k}}_{C_{p^{k-1}}} N^{C_{p^{k-1}}}_ei^*_eN^{C_{p^{k-1}n}}_{C_n} H\m{R}\ar[r,"\varepsilon_{{p^{k-1}}}"] \ar[u,"\eta_{{p^kn}}"]& N^{C_{p^k}}_{C_{p^{k-1}}}i_{C_{p^{k-1}}}^*N^{C_{p^{k-1}n}}_{C_n}H\m{R}\ar[u, "\eta_{{p^kn}}"]
\end{tikzcd}
\end{equation}
commutes by the compatibility of the norm/restriction adjunctions between varying subgroups.  (Here for ease of notation we have simplified names of maps such as $N_{e}^{C_{p^k}}i^*_e\eta_{{p^kn}}$ to $\eta_{{p^kn}}$.)

The map across the bottom row is the norm of the ``$\varepsilon\circ \eta$'' that appears in the construction of $[a]_{k-1}$  in (\ref{externalizednormliftinspectra_general_m}),  while the one across the top row is that appearing in the construction of $[a]_k$. By Lemma \ref{lem:idcounitunitmultliftcomp} applied to the $C_{p^{k-1}n}$-spectrum $Q=N_{C_n}^{C_{p^{k-1}n}}H\m{R}$, the right vertical map is given by
\[(N^{C_{p^{k-1}n}}_{C_n}H\m{R})^{\sma p} \xto{(g^{j_1}\sma\dots\sma g^{j_p})\circ\varphi} (N^{C_{p^{k-1}n}}_{C_n}H\m{R})^{\sma p}
.\]

Composing the top row of Diagram \ref{diag:identifyLifts} with $\eta_{S^1}$ produces $\lambda[a]_k$.  When we include this additional map in Diagram (\ref{diag:identifyLifts}), we obtain the diagram
\[\begin{tikzcd}
 	N^{C_{p^k}}_e i^*_eH\m{R}\ar[d, "\cong"]\ar[r,"\varepsilon \circ \eta"] &i_{C_{p^k}}^*N^{C_{p^kn}}_{C_n}H\m{R} \ar[r, "\eta_{S^1}"] & i^*_{C_{p^k}} N_{C_{p^k n}}^{S^1} N_{C_n}^{C_{p^k n}} H \m{R} \\
	N^{C_{p^k}}_{C_{p^{k-1}}}N^{C_{p^{k-1}}}_e i^*_eH\m{R}\ar[r,"\varepsilon \circ \eta"] &  N^{C_{p^k}}_{C_{p^{k-1}}}i_{C_{p^{k-1}}}^*N^{C_{p^{k-1}n}}_{C_n}H\m{R}\ar[u, "\varepsilon \circ \eta"] &
\end{tikzcd}
\]
which identifies $\lambda [a]_k$ as a twisted normed power of $\lambda [a]_{k-1}.$

From here, the analyzing the differential $d$ applied to $\lambda [a]_k$ proceeds essentially as in the $k=1$ case; note that the action $\alpha$ involves multiplication by $g^{-1}$ where $g$ is a generator of $C_{p^{k-1}n}$ rather than $C_{n}$. Since  $F$ is the restriction map on graded Green functors, $i^*_{C_{p^{k-1}n}}F$ is given by the map 
\[ [\Sigma^\infty_{C_{p^k}} S^1,i_{C_{p^k}}^*\THH_{C_n}\m{R}]_{C_{p^k}}\to [\Sigma^\infty_{C_{p^{k-1}}} S^1,i_{C_{p^{k - 1}}}^*\THH_{C_n}\m{R}]_{C_{p^{k - 1}}} \]
that forgets from $C_{p^k}$-equivariant homotopy classes to the underlying $C_{p^{k - 1}}$-equivariant homotopy classes of maps.  Writing $Q=N^{C_{p^{k-1}n}}_{C_n}H\m{R}$ as above, this means that $i^*_{C_{p^{k-1}n}}Fd\lambda[a]_{k}$ is the $C_{p^{k - 1}}$ homotopy class of the map
\begin{multline}\label{FDlambdaxkcompositemap}
S^1_{pn\bullet}\sma N^{C_{p^{k}}}_e\sphere 
\cong S^1_{pn\bullet}\sma N^{C_{p^k}}_{C_{p^{k-1}}}\sphere_{C_{p^{k-1}}} 
\xto{N[a]_{k-1}}
 S^1_{pn\bullet}\sma N^{C_{p^{k}}}_{C_{p^{k-1}}}i^*_{C_{p^{k-1}}}Q
\xto{\varepsilon\circ \eta} \\
 S^1_{pn\bullet}\sma i_{C_{p^{k}}}^*N^{C_{p^{k}n}}_{C_{p^{k-1}n}} Q \xto{\eta_{S^1}}
  S^1_{pn\bullet}\sma i^*_{C_{p^{k}}} N^{\cy, C_{p^kn}}_\bullet (N^{C_{p^{k}n}}_{C_{p^{k-1}n}} Q)
 		\xto{\cong} \\
S^1_{pn\bullet}\sma \sd_p N^{\cy,C_{p^{k - 1}n}}_\bullet Q 
 		\xto{\alpha} \sd_{p}N^{\cy, C_{p^{k-1}n}}_\bullet Q 
 		\xto{d_0^{(\bullet+1)(p-1)}} i^*_{C_{p^{k }}} N^{\cy,C_{p^{k-1}n}}_\bullet Q
\end{multline}
where $d_0^{(\bullet+1)(p-1)}$ is the usual simplicial map that on realization is homotopic to the equivalence $\lvert \sd_{p}N^{\cy,C_{p^{k-1}n}}_\bullet Q\rvert\to \lvert N^{\cy, C_{p^{k-1}n}}_\bullet Q\rvert$. As in the previous case, on simplicial level $q$, the map $d_0^{(q+1)(p-1)}$ simply multiplies the left-most smash factors of $Q^{\sma (q+1)p}$ until there are only $q+1$-factors remaining. Unraveling definitions analogously to the case where $k = 1$ produces a diagram similar to (\ref{unpackingofFlambdaa1oncomponents}), with $a$ replaced by $[a]_{k-1}$.  Note that the map (\ref{FDlambdaxkcompositemap}) produces a product of $p-1$ images of $[a]_{k-1}$ under powers of the generator of $C_{p^{k-1}n}$ in the left-most smash factor of $Q$.

We now turn to the right hand side of the relation, $\lambda[a]_{k - 1}^{p-1}d\lambda[a]_{k - 1}$.  The element $\lambda[a]_{k - 1}^{p-1}d\lambda[a]_{k - 1}$ is represented by the following map in simplicial $C_{p^{k-1}}$-spectra, where $Q=N^{C_{p^{k-1}n}}_{C_n} H\m{R}$ as before and we have removed restrictions $i^*_{C_{p^{k-1}n}}$ from the notation for readability.
\begin{multline}\label{eq:lambdaakmult}
S^1_{n\bullet}\sma (\sphere_{C_{p^{k-1}}})^{\sma p} \xto{\id \sma [a]_{k-1}^{\sma p}}
 S^1_{n\bullet}\sma Q^{\sma p} 
	 \xto{\eta_{S^1}^{\sma p}} S^1_{n\bullet}\sma (N^{\cy, C_{p^{k-1}n}}_\bullet Q)^{\sma p} \\
	 \xto{\id\sma \text{mult} \sma \id} S^1_{n\bullet} \sma N^{\cy, C_{p^{k-1}n}}_\bullet Q \sma  N^{\cy, C_{p^{k-1}n}}_\bullet Q
	 \xto{\cong}  N_\bullet^{\cy, C_{p^{k-1}n}} Q \sma (S^1_{n\bullet} \sma N_\bullet^{\cy, C_{p^{k-1}n}} Q )	\\
 \xto{\id \sma \alpha}  N_\bullet^{\cy, C_{p^{k-1}n}} Q \sma  N_\bullet^{\cy, C_{p^{k-1}n}} Q 
	 \xto{\mu} N_\bullet^{\cy, C_{p^{k-1}n}} Q
\end{multline}
Here the $\text{mult}$ map multiplies the first $p-1$ copies of $Q=N_{C_n}^{C_{p^{k-1}n}} H \m{R}.$ 

Ultimately, $\lambda[a]_{k - 1}^{p-1}d\lambda[a]_{k - 1}$ is the homotopy class of the realization of the map that on simplicial level $q$ and on the wedge summand indexed by the generator of $C_{n(q+1)}$ sends 
\[\sphere^{\sma (q+1)}\xto{[a]_{k-1}^{p-1}\sma 1^{\sma (q-1)}\sma g^{-1}[a]_{k-1}}Q^{\sma q+1}\]
As before, while the maps (\ref{FDlambdaxkcompositemap}) and (\ref{eq:lambdaakmult}) do not agree on the nose, they do represent homotopy equivalent elements: once we geometrically realize and obtain the $S^1$-action, the action of an element of $C_{p^{k-1}n}\subset S^1$ is homotopic to the identity.

Finally, turning to the case of an element $a\in \m{R}(C_n/C_m)$, we note that such an element can be represented by a map
\[\sphere_{C_m}\to i^*_{C_m}H\m{R}.\]
The analysis of the relation goes through in the same way as in the case where $C_m=e$; we need only to work with $C_{pm}$-equivariance or $C_{p^{k}m}$-equivariance.  The key observation is that the isomorphism (\ref{Lambdapnopisomorphism}) is not only an isomorphism of simplicial  $C_{p}$-spectra but also of  $C_{pm}$-spectra because it arises from an isomorphism of $\Lambda_{pn}^\op$-spectra.
\end{proof}

\bibliographystyle{plain}
\bibliography{bib}

\end{document}